\def\RR{\mathbb R}
\newcommand{\tr}{\hbox{{\rm tr}}}
\newcommand{\diag}[1]{\hbox{{\rm diag}}( #1 )}
\def\R#1{$(\ref{#1})$}
\def\D{\,{\rm d}}
\newcommand{\de}{\partial}
\newcommand{\sinc}{\operatorname{sinc}}
\newcommand{\one}{\mathbbm{1}}
\newcommand{\LL}{\mathcal{L}}
\newtheorem{theorem}{Theorem}[section]
\newtheorem{coroll}{Corollary}[theorem]
\newtheorem{lemma}[theorem]{Lemma}
\newtheorem{definition}{Definition}[section]
\newenvironment{meqn}
{\arraycolsep=1.4pt
  
  \begin{array}{rcl}}
  {\end{array}}
\begin{document}
\author[A. Zanna]{Antonella Zanna$^\dag$}
\thanks{$^\dag$ Matematisk institutt, Universitetet i Bergen, Norway,
  email: \texttt{Antonella.Zanna@uib.no}}
\date{\today}
\title{Symplectic P-stable Additive Runge--Kutta Methods}

\begin{abstract}
  Symplectic partitioned Runge--Kutta methods can be obtained from a
  variational formulation where all the terms in the discrete Lagrangian are
  treated with the same quadrature formula. We construct a family of
  symplectic methods allowing the use of different quadrature formulas
  (primary and secondary) for different terms of the Lagrangian. In particular, we study a
  family of methods using Lobatto quadrature (with corresponding
  Lobatto IIIA-B symplectic pair) as a primary method and
  Gauss--Legendre quadrature as a secondary method.
  The methods have the same favourable implicitness as the underlying Lobatto
   IIIA-B pair, and, in addition, they are \emph{P-stable}, therefore
   suitable for application to highly  oscillatory problems.
\end{abstract}

\maketitle

\section{Introduction}
\label{sec:introduction}
In this paper we introduce a family of Runge--Kutta methods
of additive type particularly suited to highly oscillatory problems.
Our method are derived from a variational
formulation, using different quadrature formulas for different parts
of the Lagrangian. We will consider mainly a formulation where we use
a primary method (giving rise to a symplectic PRK) and a secondary
method, that is based on different quadrature weight and nodes. 
The final formulation of the hybrid method can be classified as
special subclass of symplectic 
Additive Runge--Kutta (ARK) methods.
ARK were introduced already in the 80's \cite{cooper83ark} to deal
with stiff ODEs. These methods have been recently generalized by \cite{sandu15gark}
(GARK methods) to add flexibility in treating different force terms in
the differential equation by different sets of coefficients. In the context of
algebraic differential equations, similar approaches have been
followed by \cite{jay98spark} with special attention to structure
preservation in Hamiltonian systems. Recently, GARK methods for stiff
ODEs and DAEs were considered in \cite{tanner18gark} with focus
on the combination Gauss/Radau IIA and Gauss/Lobatto IIIC.

Our motivation comes from the study of highly oscillatory problems, 
trigonometric integrators (see \cite{hairer06gni} and references therein),
in particular, the intriguing properties of the second-order
implicit-explicit (IMEX) method originally proposed by
\cite{skeel97cis} and further analyzed in a variational setting in \cite{stern09imex}
and as a modified trigonometric integrator in
\cite{mclachlan14mti}. This method is equivalent to applying the
``midpoint rule''\footnote{In facts, the method uses a linear
  interpolation for the internal stage of the implicit midpoint rule.}
to the fast, linear part of the 
system, and the leapfrog (St{\"o}rmer/Verlet) method to the slow,
nonlinear part. It has the following properties: (i) it
is symplectic; (ii) it is free of artificial resonances; (iii) it is
the unique method that correctly captures slow energy exchange to
leading order; (iv) it conserves the total energy and a modified
oscillatory energy up to to second order; (v) it is uniformly
second-order accurate in the slow components; and (vi) it has the
correct magnitude of deviations of the fast oscillatory energy, which
is an adiabatic invariant \cite{mclachlan14mti}.

The St{\"o}rmer/Verlet method belongs to the family of Lobatto IIIA-B
partitioned Runge--Kutta methods (PRK). In an unpublished report from 1995, Jay and Petzold studied the
linear stability of Lobatto PRK and proved that none of the methods in
this family is P-stable, as they are not unconditionally stable when
applied to the harmonic oscillatory. They concluded that these methods
were not suitable for highly oscillatory systems
\cite{jay95hos}. Further stability properties of these Lobatto PRK
were also studied in the context of multisymplectic integration and the
wave equation in \cite{mclachlan11lso}.

Being the lack of P-stability well established for Lobatto PRK, it is
therefore quite a surprise that the combination implicit midpoint and
St{\"o}rmer/Verlet is unconditionally stable. Intrigued by the
properties of the IMEX, \cite{zanna17afo} introduced a family of
symplectic, unconditionally stable modified trigonometric integrators
of second order, which included the IMEX as a special case. 

In this paper we construct higher order integrators  pursuing the
variational approach of the Lagrangian formalism.
The technique used is very close to the one described in
\cite{hairer06gni} for the derivation of symplectic PRK methods.
The main idea is similar to that described
above for the second order IMEX: to use a Lobatto method for the
kinetic energy and slow potential (the latter being costly to compute)
and Gauss-Legendre of the same order for the linear highly oscillatory
part (easy to compute). To avoid the introduction of further function evaluation of the
potential, we approximate the internal stages values by two
techniques, interpolation and collocation. Although we focus
especially on the Lobatto and Gauss--Legendre combination as primary
and secondary method respectively, the derivation presented is general
and applies to different combinations of primary and secondary
methods.

Other variational approaches exist, especially in the  community of
computational mechanics, see for instance
\cite{marsden_west_2001}. Recently, the latter approach has been used,
together to a splitting of the Lagrangian, in the context of higher
order variational integrators for dynamical systems with holonomic
constraints \cite{wenger2017caa} and in order to devise mixed order
integrators for systems with multiple scales \cite{wenger2016vio}.
The approach in \cite{wenger2017caa} and the one presented in this
paper have several common features but also diversities, like the
choice of the independent variables with respect to which the
variations are done. Having said this, it is not unlikely that some of the
methods derived by the two approaches will coincide for some similar
choices of coefficients and some problems, but a thorough comparison
is outside the scope of the present paper.

The paper is organized as follows. In Section~\ref{sec:vari-deriv} we
show the general theory for the derivation of the methods and how to
construct the coefficients by either interpolation or collocation. In
Section~\ref{sec:order} we prove some results on the order of the
proposed methods.
In Section~\ref{sec:P-stability} we study the P-stability
of the methods and
in Section~\ref{sec:methods-as-modified} we show how the methods can be put in
the framework of modified trigonometric integrators.
In Section~\ref{sec:numer-exper} we show several numerical
tests on the Fermi-Pasta-Ulam-Tsingou problem and compare with higher order
construction of the IMEX method using the Yoshida time-stepping
technique. Finally, we have some concluding remarks and in the
Appendix we present explicitly tables with the coefficients for the methods of
the Lobatto--Gauss-Legendre family based on interpolation for order
two, four and six.

\section{Variational derivation}
\label{sec:vari-deriv}
It is well known that symplectic Partitioned Runge-Kutta methods (PRK)
can be obtained by a variational method, doing discrete variations on
a discrete Lagrangian approximating the continuous Lagrangian
$L(q,\dot q)$ \cite{hairer06gni}.

Consider a Lagrangian $L(q,\dot q)$ and assume that it can be written
as sum of two (or more) terms,
\begin{displaymath}
  L(q,\dot q) = L^1(q, \dot q) + L^2 (q, \dot q)+L^3(q, \dot q)+ \cdots.
\end{displaymath}
Whereas the derivation of symplectic PRK uses the same quadrature for all the
terms, we consider the case when one would like to use a different
quadrature for one or more terms in the sum.
A motivating example is the case of highly oscillatory problems in
molecular dynamics, with a Lagrangian of the form
\begin{displaymath}
  L(q,\dot q) = T(\dot q) - V^1(q) -V^2(q),
\end{displaymath}
where $V^1$ is a slow potential while $V^2$ is a fast oscillating
potential, for instance of the form $V^2 = -\frac12 q^T \Omega^2 q$,
$\Omega$ being a diagonal matrix with elements $\omega_i \gg 1$. A natural splitting in this context would be 
\begin{displaymath}
  L^1 = T-V^1 , \qquad L^2 = - V^2.
\end{displaymath}
In this paper, we restrict the discussion to the case when the
Lagrangian is split in two terms as above, but the generalization to
several terms is straightforward.

We focus on a discrete Lagrangian of the form
\begin{equation}
  \label{eq:1}
  L_h = h \sum_{i=1}^{s_1} b_i L^1(Q_i, \dot Q_i) + h \sum_{k=1}^{s_2} \tilde b_k L^2 (\tilde Q_k),
\end{equation}
with
\begin{eqnarray}
  \label{eq:2}
  Q_i &=& q_0 + h \sum_{j=1}^{s_1} a_{i,j} \dot Q_j\\
  q_1 &=& q_0 + h \sum_{i=1}^{s_1} b_i \dot Q_i
          \label{eq:3}
\end{eqnarray}
where the coefficients $(A, b,c)$ are the coefficients of a standard
RK method with $s_1$ stages (\emph{primary} method), while $(\tilde b,
\tilde c)$ are the weights and nodes of the \emph{secondary}
quadrature with $s_2$ weights and nodes respectively.
To avoid the introduction of extra internal stages due to the
secondary method, we assume that the $\tilde Q_i$ can be written as
\begin{equation}
  \label{eq:4}
  \tilde Q_i = q_0 + h \sum_{j=1}^{s_1} \tilde a_{i,j} \dot Q_j
\end{equation}
for some coefficients $\tilde a_{i,j}$, with $i=1, \ldots, s_1$ and
$j=1, \ldots, s_2$ to be determined. 
Because of the linear dependence between the $Q_i$s, the $\tilde{Q}_i$ and $\dot Q_i$s,
we perform the variation of \R{eq:1} using the method of Lagrange
multipliers in a manner very similar to the derivation of symplectic
PRK described in \cite{hairer06gni}. The augmented discrete Lagrangian
using the constraint \R{eq:3} is then
\begin{equation}
  \label{eq:5}
  h \sum_{i=1}^{s_1} b_i L^1(Q_i, \dot Q_i) + h \sum_{k=1}^{s_2} \tilde b_k L^2 (\tilde Q_k) - \lambda (q_1 - q_0 - h \sum_{i=1}^{s_1} b_i \dot Q_i).
\end{equation}
The variation variables are now the $\dot Q_i$ and $\lambda$. Derivation with respect to $\lambda$ imposes the constraint \R{eq:3}, while derivation with respect to the $\dot Q_j$ gives the relation between the multiplier $\lambda$ and the other variables,
\begin{equation}
  \label{eq:6}
  \sum_{i=1}^{s_1} b_i\left( \frac{\de L^1(Q_i, \dot Q_i)}{\de q} \frac{\de Q_i} {\de \dot Q_j} \right) + b_j \frac{\de L^1}{\de \dot Q_j}+ \sum_{k=1}^{s_2} \tilde b_k \frac{\de L^2}{\de q} (\tilde Q_k) \frac{\de \tilde Q_k}{\de \dot Q_j }  = \lambda b_j
\end{equation}
We set
\begin{align}
  \label{eq:7}
  &P_j = \frac{\de L^1}{\de \dot q} (Q_j, \dot Q_j),  \quad \dot P_j
  =\frac{\de L^1}{\de  q} (Q_j, \dot Q_j), \\
  &\tilde  P_j = \frac{\de L^2}{\de \dot q} (\tilde Q_j)=0, \quad \dot{\tilde
  P}_j = \frac{\de L^2}{\de q} (\tilde Q_j) .
  \label{eq:8}
\end{align}
With this notation, and using the relations $\frac{\de Q_i}{\de \dot
  Q_j} = h a_{ij} I$, $\frac{\de \tilde Q_i}{\de \dot
  Q_j} = h \tilde a_{ij} I$, the constraint conditions in equation \R{eq:6} read
\begin{equation}
  \label{eq:9}
  b_j P_j = b_j \lambda - h \sum_{i=1}^{s_1}b_i a_{i,j} \dot P_i- h
  \sum_{k=1}^{s_2} \tilde b_k \tilde a_{k,j}\dot {\tilde P}_k.
\end{equation}
The symplectic method is obtained via the discrete Euler--Lagrange
equations, which can be formulated introducing the conjugate
variables $p_0$ and  $p_1$ as
\begin{equation}
  \label{eq:10}
  p_0 = -\frac{\de L_h}{\de q_0}, \qquad p_1 = \frac{\de L_h}{\de q_1}
\end{equation}
and thereafter eliminating $\lambda$ using \R{eq:9}.

By direct computation, we have
\begin{eqnarray}
  p_0 &=& -\frac{\de L_h}{\de q_0} = -h \sum_{i=1}^{s_1} b_i \dot P_i
          (I + h \sum_{l=1}^{s_1} a_{i,l}\frac{\de \dot Q_l}{\de q_0})
          \nonumber \\
      && \qquad \mbox{} - h \sum_{i=1}^{s_1} b_i P_i \frac{\de \dot
         Q_i}{\de q_0} - h \sum_{k=1}^{s_2} \tilde b_k \dot{\tilde
         P}_k (I + h \sum_{m=1}^{s_1} \frac{\de \dot Q_m}{\de q_0})
         \label{eq:11}
  \\
  &=& - h \sum_{i+1}^{s_1} b_i \dot P_i - h \sum_{k=1}^{s_2}
      \tilde b_k \dot{\tilde P}_k - \sum_{i=1}^{s_1} b_i P_i \frac{\de \dot
      Q_i}{\de q_0} + \sum_{l=1}^{s_1} (b_l P_l -\lambda b_l)
      \frac{\de \dot Q_l}{\de q_0}
      \label{eq:12}
  \\
  &=& - h \sum_{i+1}^{s_1} b_i \dot P_i - h \sum_{k=1}^{s_2}
      \tilde b_k \dot{\tilde P}_k + \lambda,
      \label{eq:13}
\end{eqnarray}
where in \R{eq:12} we have used \R{eq:9} and in \R{eq:13} we have used
$\sum_{i=1}^{s_1} b_l \frac{\de \dot Q_l}{\de q_0} = - I$ which comes
from the derivation of \R{eq:3}.

By a similar token, we find
\begin{equation}
  \label{eq:14}
  p_1 = \frac{\de L_h}{\de q_1}= \lambda,
\end{equation}
so that, eliminating $\lambda$, we get the relation
\begin{equation}
  \label{eq:15}
  p_1 = p_0 + h \sum_{i=1}^{s_1} b_i \dot P_i + h \sum_{k=1}^{s_2}
  \tilde b_k \dot {\tilde P}_k.
\end{equation}
To obtain the definition of the $P_j$, we use \R{eq:13} and
substitute in \R{eq:9} to obtain
\begin{equation}
  \label{eq:16}
  P_j = p_0 + h \sum_{i=1}^{s_1} \widehat
{a}_{i,j} \dot
  P_i +  h \sum_{k=1}^{s_2} (\tilde b_k -\frac{\tilde b_k \tilde
    a_{k,j}}{b_j} )\dot {\tilde P}_k.
\end{equation}
We recognize that the first set of coefficients is $\widehat
{a}_{i,j}= b_j - b_j a_{j,i}/b_i$, so that the $L^1$ part is treated
with a classical symplectic pair of PRK \cite{hairer06gni}.   The second set of coefficients
imposes the symplecticity condition for the use of the secondary
method in the treatment of the $L^2$.

\subsection{General format of the methods}
\label{sec:constr-meth-some}
The generalization to $L=L^1+L^2 + \cdots + L^n$ is
straightforward and leads to a symplectic subclass of ARK methods.
In this paper we describe in detail the case
$n=2$,  that is $L^1 = \frac12 \dot q ^T \dot q -
V^1(q)$, $L^2 = -V^2(q)$. Let $-\nabla V^1= F^1$ and $-\nabla V^2 = F^2$ the
forces corresponding to the potentials $V^1, V^2$.
Denote by $(A,b,c)$ the primary method so that, with $(\widehat A,
b,c)$, it forms symplectic PRK pair.  Let $(\tilde b,
\tilde c)$ be the secondary method (only quadrature weights and nodes
are necessary).
We have
\begin{displaymath}
  \frac{\de L^1}{\de
  q} = -\nabla V(q) = F^1(q),  \qquad  \frac{\de L^2}{\de
  q} = -\nabla V(q) = F^2(q) \qquad \frac{\de L^1}{\de \dot q} = \dot
q.
\end{displaymath}
The constraint relation \R{eq:6} 
\begin{displaymath}
  h \sum_{i=1}^{s_1} b_i a_{i,j} F^1(Q_i) + b_j \dot Q_j + h
  \sum_{k=1}^{s_2} \tilde b_k \tilde a_{k,j} F^2 (\tilde Q_k) =
  \lambda_j b_j
\end{displaymath}
allows us to find the derivatives $\dot Q_i(=P_i)$ at the intermediate stages.
The full method reads
\begin{equation}
  \label{eq:20}
  \begin{meqn}\displaystyle 
  p_1 &=& \displaystyle  
  p_0 + h \sum_{i=1}^{s_1} b_i F^1(Q_i) + h \sum_{k=1}^{s_2}
  \tilde b_k F^2(\tilde Q_k) \\
  P_i &=&\displaystyle  
  p_0  + h \sum_{j=1}^{s_1} \widehat a_{i,j} F^1 (Q_j) +
               h \sum_{k=1}^{s_2} \widehat{\tilde a_{i,k}}
               F^2(\tilde Q_k) \\
  Q_i &=& \displaystyle  
  q_0 + h \sum_{j=1}^{s_1} a_{i,j} P_j \\
  \tilde Q_i &=& \displaystyle  
  q_0 + h \sum_{j=1}^{s_2} \tilde a_{i,j} P_j,
\end{meqn}
\end{equation}
where $\widehat {\tilde a}_{i,k} = \tilde b_k - \frac{\tilde
  b_k \tilde a_{k,i}}{b_i}$. In matrix notation,
\begin{align}
  &\widehat{\tilde A} = (\one_{s_1\times s_2} - B^{-1} \tilde A^T)
    \tilde B, \qquad B = \diag{b}, \tilde B = \diag{\tilde b}, \label{eq:widehattildeA}\\
  & \widehat{A} = (\one_{s_1\times s_2} - B^{-1} A^T)B \label{eq:widehatA}
  \end{align}
The method requires the evaluation of extra internal stages for the
secondary method (the
$\tilde Q_i$) \emph{but only as many function evaluations of $F^1$ as
for the underlying (symplectic) PRK method}, allowing for different number of
function evaluations for $F^2$. This can be particularly interesting
when $F^1$ is expensive, while $F^2$ is cheap to compute.

The methods \R{eq:20}
can be applied to an Hamiltonian system
\begin{displaymath}
  \dot q = \frac{\de H}{\de p}, \qquad \dot p = -\frac{\de H}{\de q}.
\end{displaymath}
with Hamiltonian energy $H(q,p) = p^T \dot q - L(q, \dot q)$ where  $L(q, \dot q) =
\frac12 \dot q^T \dot q - V^1(q) -V^2(q)$.

\begin{theorem}
  The methods \R{eq:20} with $\widehat{\tilde A} $ and $\widehat A$ as
  in \R{eq:widehattildeA} and \R{eq:widehatA} are symplectic.
\end{theorem}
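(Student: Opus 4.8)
The plan is to establish symplecticity directly, by verifying that the canonical two-form is preserved along the one-step map: $\D p_1\wedge\D q_1=\D p_0\wedge\D q_0$, where $\D$ denotes exterior differentiation on the (local) solution manifold of \R{eq:20} parametrised by $(q_0,p_0)$, and $\D p\wedge\D q$ abbreviates $\sum_a\D p^a\wedge\D q^a$. Conceptually the statement is forced by the variational construction of Section~\ref{sec:vari-deriv}: \R{eq:20} is the map generated by the discrete Lagrangian \R{eq:1} through the discrete Euler--Lagrange relations \R{eq:10}, and the internal-stage differentials cancel from $\D L_h$ precisely because the stages satisfy the Lagrange-multiplier stationarity condition \R{eq:9} --- the discrete envelope argument that already appears in the computation \R{eq:11}--\R{eq:13} and underlies the treatment of symplectic PRK in \cite{hairer06gni}. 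The computation below is the explicit incarnation of this fact for the separable case $L^1=\frac12\dot q^T\dot q-V^1(q)$, $L^2=-V^2(q)$, in which $P_i=\dot Q_i$, $\dot P_i=F^1(Q_i)$ and $\dot{\tilde P}_k=F^2(\tilde Q_k)$.

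First I would differentiate the defining relations of \R{eq:20} together with $q_1=q_0+h\sum_i b_i P_i$, using that $\D\dot P_i=DF^1(Q_i)\,\D Q_i$ and $\D\dot{\tilde P}_k=DF^2(\tilde Q_k)\,\D\tilde Q_k$ with \emph{symmetric} Jacobians $DF^1=-\nabla^2 V^1$, $DF^2=-\nabla^2 V^2$. Substituting $\D q_1=\D q_0+h\sum_i b_i\,\D P_i$ and $\D p_1=\D p_0+h\sum_i b_i\,\D\dot P_i+h\sum_k\tilde b_k\,\D\dot{\tilde P}_k$ and expanding the wedge product gives $\D p_0\wedge\D q_0$ together with $O(h)$ and $O(h^2)$ remainders. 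In the remainders I would substitute $\D q_0=\D Q_i-h\sum_j a_{i,j}\,\D P_j=\D\tilde Q_k-h\sum_j\tilde a_{k,j}\,\D P_j$ and $\D p_0=\D P_i-h\sum_j\widehat a_{i,j}\,\D\dot P_j-h\sum_k\widehat{\tilde a}_{i,k}\,\D\dot{\tilde P}_k$; after relabelling, every surviving term is a multiple of $\D\dot P_i\wedge\D Q_i$, $\D\dot{\tilde P}_k\wedge\D\tilde Q_k$, $\D\dot P_i\wedge\D P_j$ or $\D\dot{\tilde P}_k\wedge\D P_j$. The first two families vanish because the Jacobians are symmetric.

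It then remains to show that the coefficient of $h^2\,\D\dot P_i\wedge\D P_j$ and that of $h^2\,\D\dot{\tilde P}_k\wedge\D P_j$ vanish for all indices. A short rearrangement identifies these with $b_ib_j-b_ia_{i,j}-b_j\widehat a_{j,i}$ and $b_j\tilde b_k-\tilde b_k\tilde a_{k,j}-b_j\widehat{\tilde a}_{j,k}$, that is, with the symplecticity relation $b_i\widehat a_{i,j}+b_ja_{j,i}=b_ib_j$ of the PRK pair and its analogue $b_j\widehat{\tilde a}_{j,k}+\tilde b_k\tilde a_{k,j}=b_j\tilde b_k$ for the secondary quadrature. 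Both identities hold by construction: since $\one$ is the all-ones matrix we have $(\one B)_{i,j}=b_j$ and $(\one\tilde B)_{j,k}=\tilde b_k$, so \R{eq:widehatA} and \R{eq:widehattildeA} give $b_j\widehat a_{j,i}=b_ib_j-b_ia_{i,j}$ and $b_j\widehat{\tilde a}_{j,k}=b_j\tilde b_k-\tilde b_k\tilde a_{k,j}$. Hence $\D p_1\wedge\D q_1=\D p_0\wedge\D q_0$ and the method is symplectic.

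The main obstacle is the bookkeeping rather than any analytic difficulty: the primary and secondary methods carry different stage counts $s_1\neq s_2$, so one must keep the rectangular matrix $\widehat{\tilde A}\in\RR^{s_1\times s_2}$ straight and relabel the summation indices carefully when collecting cross terms. The symmetry of the Hessians $\nabla^2 V^1,\nabla^2 V^2$ removes the genuinely new internal-stage contributions coming from the secondary quadrature, and the two scalar identities that remain are precisely what the definitions \R{eq:widehatA}--\R{eq:widehattildeA} were built to enforce, so nothing deeper is needed.
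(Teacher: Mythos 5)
Your proof is correct, but it takes a genuinely different route from the paper. The paper disposes of the theorem in one line: the discrete Lagrangian $L_h(q_0,q_1)$ of \R{eq:1} acts as a generating function of the first kind via \R{eq:10}, and any map so generated is automatically symplectic --- the stage equations never need to be examined because they are stationarity conditions of the augmented Lagrangian \R{eq:5}, so their differentials drop out of $\D L_h$ by the envelope argument. You instead verify $\D p_1\wedge\D q_1=\D p_0\wedge\D q_0$ by direct computation, in the style of the classical PRK proof: the internal-stage two-forms $\D\dot P_i\wedge\D Q_i$ and $\D\dot{\tilde P}_k\wedge\D\tilde Q_k$ vanish by symmetry of the Hessians of $V^1,V^2$, and the surviving cross terms vanish precisely because $b_i\widehat a_{i,j}+b_ja_{j,i}=b_ib_j$ and $b_i\widehat{\tilde a}_{i,k}+\tilde b_k\tilde a_{k,i}=b_i\tilde b_k$, which are immediate from the componentwise form of \R{eq:widehatA} and \R{eq:widehattildeA} (I checked the index bookkeeping; your coefficients $b_ib_j-b_ia_{i,j}-b_j\widehat a_{j,i}$ and $b_j\tilde b_k-\tilde b_k\tilde a_{k,j}-b_j\widehat{\tilde a}_{j,k}$ are right and do vanish). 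What your approach buys is a self-contained verification that makes explicit the algebraic symplecticity conditions --- exactly the GARK/SPARK coupling conditions of \cite{sandu15gark,jay98spark} that the paper only mentions in passing after the theorem --- and it would certify symplecticity for \emph{any} coefficients satisfying those two identities, whether or not they arise variationally. What it costs is length and a restriction to the separable case $L^1=\tfrac12\dot q^T\dot q-V^1$, $L^2=-V^2$ (which is all the theorem claims, so this is harmless), whereas the generating-function argument covers the general Lagrangian of Section~\ref{sec:vari-deriv} with no extra work.
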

\begin{proof}
  Follows immediately from the fact that the variational derivation of
  the methods uses essentially generating forms of the first kind.
\end{proof}

While the symplectic requirements for the matrix $\widehat A$ are well
known in the the context of PRK, those for $\widehat{\tilde A}$ are
the same as those derived by algebraic arguments in
\cite{sandu15gark,jay98spark}. In their approaches, one uses the
relations \R{eq:widehattildeA} and \R{eq:widehatA} as (nonlinear) constraints to
solve for the coefficients of the methods, obeying, in addition, some
given order conditions. The solution of the nonlinear system needs not be unique.

In this paper, we follow a different approach which leads to at least two
solutions for the matrix ${\tilde A}$ and consequently
$\widehat{\tilde A}$.  

\subsection{Construction of the matrix $\tilde A$}
\label{sec:choice-interpolation}
There are two natural choices to construct the approximations $\tilde
Q_j$ in \R{eq:4}. We assume that the primary method is desribed
by the RK tableau
\begin{displaymath}
  \begin{array}{c| ccc}
    c_1 & a_{1,1}&\cdots &a_{1,s_1} \\
    \vdots & \\
    c_{s_1} & a_{s_1,1} & \cdots & a_{s_1,s_1}\\ \hline
        &b_1 & \cdots & b_{s_1}
  \end{array}.
\end{displaymath}
For the secondary method, we construct a tableau of the type
\begin{displaymath}
  \begin{array}{c| ccc}
    \tilde c_1 & \tilde a_{1,1}&\cdots &\tilde a_{1,s_1} \\
    \vdots & \\
    \tilde c_{s_2} & \tilde a_{s_2,1} & \cdots & \tilde a_{s_2,s_1}\\ \hline\\*[-10pt]
        &\tilde b_1 & \cdots & \tilde b_{s_2}
  \end{array}
\end{displaymath}
where the $\tilde c_k$ and $\tilde b_k$ are respectively the nodes and
weights of the secondary. Note that the matrix $\tilde A$ has
dimension $s_2\times s_1$.

\begin{description}
\item [Interpolation]   Given the primary nodes $c_1, \ldots c_{s_1}$, we let
  $\LL_i(t)=\prod_{k\not=i}\frac{t-c_k}{c_i-c_k}$ be the $i$th cardinal
  Lagrange polynomial and construct the  interpolating polynomial
  \begin{equation}
    \label{eq:17}
    \tilde Q(t) = \sum_{l=1}^{s_1} \LL_l(\tau) Q_l,
  \end{equation}
  where the $Q_l= q_0 + h \sum_{j=1}^{s_1} a_{l,j}\dot Q_j$ are
  obtained by the primary method. Substituting $Q_l$ in
  \R{eq:17}, recalling that $\sum_{j} \LL_j(\tau) = 1$ and computing
  in the nodes $\tilde c_i$ of the secondary method, we recover \R{eq:4}, with coefficients
  \begin{displaymath}
    \tilde a_{i,j} = \sum_{l=1}^{s_1} \LL_l(\tilde c_i) a_{l,j}, \qquad
    i = 1, \ldots, s_2, \quad j = 1, \ldots, s_1.
  \end{displaymath}
  Let $\LL(\tilde c)$ the $s_2\times s_1$ matrix with elements $\LL(\tilde c)_{i,j} =
  \LL_{j}(\tilde c_i)$ of the \emph{primary} Lagrange cardinal
  polynomials evaluated in the \emph{secondary} nodes. Then
  \begin{equation}
    \tilde A = \LL(\tilde c) A.
     \label{eq:18}
  \end{equation}
 where $A$ is the coefficient matrix of the primary method.

\item [Collocation]
  Another natural choice is to use the interpolation of the $\dot
  Q_j$s:  we use the cardinal interpolating polynomials $\LL_j(t)$
  constructed with the nodes of the   \emph{primary method} to
  construct $\dot  Q \approx \sum_{j=1}^{s_1}\LL_{j}(t) \dot Q_j$. The
  polynomial is integrated to obtain $\tilde
  Q(t)\approx  q_0 + h \int_{0}^t \sum_{j=1}^{s_1}\LL_{j}(\tau) \dot
  Q_j \D \tau$. Thereafter, evaluating in the \emph{secondary nodes} $\tilde c_i$, we recover
  \R{eq:4} with coefficients 
  \begin{equation}
    \label{eq:19}
    \tilde a_{i,j} = \int_0^{\tilde c_i} \LL_j(\tau) \D \tau , \qquad
    i = 1, \ldots, s_2, \quad j = 1, \ldots, s_1.
  \end{equation}
\end{description}

\section{Order of the methods}
\label{sec:order}
A general treatment of the order conditions for these ARK methods can
be developed using the algebraic tree theory in a manner very similar
to the order analysis of the ARK, GARK  methods \cite{sandu15gark,
  tanner18gark} using the formalism of colored trees
\cite{hairer06gni}. The order conditions are used to derive the
coefficients of the methods.

In our setting, the primary method, leading to a PRK pair $(A,
\widehat A, b,c)$, and the secondary method,  $(\tilde A,
\widehat{\tilde A}, \tilde b, \tilde c)$, are given by the choices
\R{eq:18}-\R{eq:19}, but the order of the resulting method \R{eq:20} is not obvious.

\begin{lemma}
  \label{le:54}
  Assume that for the primary method, $Ac^{k-1} = \frac1k c^k$, where
  the power is intended componentwise on the vector elements.
  With the same notation as above, if $s_1\geq s_2\geq 1$, we have
  \begin{eqnarray}
    \label{eq:39}
    \tilde A c^{k-1} &=& \frac{\tilde c^k}{k}, \qquad \hbox{for } k=1, \ldots s_1-1.
  \end{eqnarray}
  In particular, $\sum_{j=1}^{s_1} \tilde a_{i,j} = \tilde c_i$, $i=1,
  \ldots, s_2$.
  Moreover, if:
  \begin{enumerate}
  \item  the  quadrature formula based on the nodes $\tilde b_i$ is exact for
  polynomials of degree at least $s_1-1$ for the interpolation
  \R{eq:18} and the primary method satisfies $\sum_i b_i a_{i,j} = b_j
  (1-c_j)$ for all $j$; and
  \item the quadrature formula based on $\tilde b_i$ and $b_i$
  are of order at least $s_1+1$ for the collocation \R{eq:19},
  \end{enumerate}
  then we have 
   \begin{eqnarray}
    \widehat{\tilde A} \one_{s_2} &=& c, 
                                      \label{eq:41}
   \end{eqnarray}
   that is $\sum_{j=1}^{s_2} \widehat{\tilde a}_{i,j} = c_i$,
   $i=1\ldots, s_1$.
  
\end{lemma}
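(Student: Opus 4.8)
The plan is to prove the three assertions in sequence, since each builds on the previous one. First I would establish \R{eq:39}. For the interpolation construction \R{eq:18} we have $\tilde A = \LL(\tilde c) A$, so $\tilde A c^{k-1} = \LL(\tilde c)(A c^{k-1}) = \frac1k \LL(\tilde c) c^k$ by the simplifying assumption on the primary method. Since $\LL(\tilde c)_{i,j} = \LL_j(\tilde c_i)$ are the primary cardinal Lagrange polynomials evaluated at the secondary nodes, $\sum_j \LL_j(\tilde c_i) c_j^k$ is exactly the interpolant through the data $(c_j, c_j^k)$ evaluated at $\tilde c_i$; this interpolant is a polynomial of degree $s_1-1$ and reproduces $t^k$ exactly when $k \le s_1-1$, giving $\LL(\tilde c) c^k = \tilde c^k$ and hence \R{eq:39}. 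For the collocation construction \R{eq:19} we have $\tilde a_{i,j} = \int_0^{\tilde c_i} \LL_j(\tau)\D\tau$, so $\sum_j \tilde a_{i,j} c_j^{k-1} = \int_0^{\tilde c_i} \bigl(\sum_j \LL_j(\tau) c_j^{k-1}\bigr)\D\tau = \int_0^{\tilde c_i} \tau^{k-1}\D\tau = \tilde c_i^k/k$, again using that the degree-$(s_1-1)$ interpolant reproduces $\tau^{k-1}$ for $k-1 \le s_1-1$. The case $k=1$ of \R{eq:39} is precisely $\tilde A\one_{s_1} = \tilde c$.

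Next I would prove \R{eq:41}, treating the two cases separately. From \R{eq:widehattildeA}, $\widehat{\tilde A} = (\one_{s_1\times s_2} - B^{-1}\tilde A^T)\tilde B$, so
\begin{equation*}
  \widehat{\tilde A}\,\one_{s_2} = \Bigl(\sum_k \tilde b_k\Bigr)\one_{s_1} - B^{-1}\tilde A^T\tilde b,
\end{equation*}
and since $\sum_k \tilde b_k = 1$ (the secondary is a quadrature on $[0,1]$), component $i$ reads $1 - \frac{1}{b_i}\sum_k \tilde b_k \tilde a_{k,i}$. So the claim $\widehat{\tilde A}\one_{s_2} = c$ is equivalent to $\sum_k \tilde b_k \tilde a_{k,i} = b_i(1-c_i)$ for all $i$. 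For the interpolation case, $\sum_k \tilde b_k \tilde a_{k,i} = \sum_k \tilde b_k \sum_l \LL_l(\tilde c_k) a_{l,i} = \sum_l a_{l,i}\bigl(\sum_k \tilde b_k \LL_l(\tilde c_k)\bigr)$; since each $\LL_l$ has degree $s_1-1$ and the secondary quadrature is exact for such polynomials by hypothesis (1), $\sum_k \tilde b_k \LL_l(\tilde c_k) = \int_0^1 \LL_l(\tau)\D\tau = b_l$ (the primary being itself a collocation-type method, or at least satisfying $b_l = \int_0^1 \LL_l$), so $\sum_k \tilde b_k \tilde a_{k,i} = \sum_l b_l a_{l,i} = b_i(1-c_i)$, which is exactly the extra hypothesis in (1). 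For the collocation case, $\sum_k \tilde b_k \tilde a_{k,i} = \sum_k \tilde b_k \int_0^{\tilde c_k} \LL_i(\tau)\D\tau$; writing $G_i(s) = \int_0^s \LL_i(\tau)\D\tau$, a polynomial of degree $s_1$, this is $\sum_k \tilde b_k G_i(\tilde c_k)$, which equals $\int_0^1 G_i(s)\D s$ because the secondary quadrature has order at least $s_1+1$ by hypothesis (2). Integration by parts gives $\int_0^1 G_i(s)\D s = [sG_i(s)]_0^1 - \int_0^1 s\LL_i(s)\D s = G_i(1) - \int_0^1 s\LL_i(s)\D s = \int_0^1(1-s)\LL_i(s)\D s$. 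Finally, since the $b_j$-quadrature has order at least $s_1+1$ and $(1-s)\LL_i(s)$ has degree $s_1$, $\int_0^1(1-s)\LL_i(s)\D s = \sum_j b_j(1-c_j)\LL_i(c_j) = b_i(1-c_i)$ using $\LL_i(c_j) = \delta_{ij}$. This gives \R{eq:41} in both cases.

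The main obstacle I anticipate is bookkeeping around the quadrature-exactness thresholds: one must check that each interpolatory polynomial appearing ($\LL_l$ of degree $s_1-1$, $G_i$ of degree $s_1$, $(1-s)\LL_i(s)$ of degree $s_1$) stays within the stated exactness range of the relevant quadrature, and that for the interpolation case the primary weights genuinely satisfy $b_l = \int_0^1 \LL_l(\tau)\D\tau$ — this is automatic if the primary method is itself of collocation type (which Lobatto IIIA is), but should be stated cleanly as the way hypothesis (1)'s condition $\sum_i b_i a_{i,j} = b_j(1-c_j)$ interacts with it. A secondary subtlety is making sure $\sum_k \tilde b_k = 1$ and $\sum_i b_i = 1$ are in force; these hold because both are quadratures on the unit interval of positive order, but it is worth recording. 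Everything else is the routine polynomial-reproduction argument sketched above.
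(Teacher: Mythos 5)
Your proposal is correct and follows essentially the same route as the paper: polynomial reproduction by the primary Lagrange basis for \R{eq:39}, and for \R{eq:41} the identity $\widehat{\tilde A}\one_{s_2}=\one_{s_1}-B^{-1}\tilde A^T\tilde b$ combined with quadrature exactness (plus integration by parts in the collocation case). Your explicit flagging of the auxiliary facts $b_l=\int_0^1\LL_l(\tau)\D\tau$ and $\sum_k\tilde b_k=1$ is a welcome clarification of hypotheses the paper uses implicitly, but the argument is the same.
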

\begin{proof}
  We first prove \R{eq:39} in the case $k=1$ ($c^0 = \one_{s_1}$).
 
  For the interpolative scheme \R{eq:17},  we have
  \begin{displaymath}
    \sum_{j=1}^{s_1} \tilde a_{i,j} = \sum_{j=1}^{s_1}
                                        \sum_{l=1}^{s_1} \LL_l(\tilde
                                        c_i) a_{l,j} =
                                        \sum_{l=1}^{s_1} \LL_l(\tilde
                                        c_i) \sum_j^{s_1} a_{l,j} = \sum_{l=1}^{s_1} \LL_l(\tilde
                                        c_i) c_l = \tilde c_i
    \end{displaymath}
    where the second last passage holds provided that $\sum_{j}a_{l,j}
    = c_l$, which is true as long as 
    the primary method has order at least one.                                  
   The function $\sum_{l=1}^{s_1} \LL_l(t) c_l$
   is the interpolant at $c_1, \ldots, c_{s_1}$ of the function with
    values $c_1, \ldots, c_{s_1}$ and therefore it is 
    the identity function: $ \sum_{l=1}^{s_1} \LL_l(t) c_i=t$. Evaluating this function
    in $\tilde c_i$ completes the proof of the statement.

  The proof for $k>1$ for the interpolative methods
  follows by a similar argument  as for $k=1$, using the property
  of the primary RK method that $A c^{k-1} = \frac{c^k}{k}$ and the
  fact that the $\LL_i$ are interpolating polynomials on $s_1$ nodes
  interpolating exactly up to degree $s_1-1$.

  For the collocative
  stages \R{eq:19}, we have
  \begin{displaymath}
    (\tilde A c^{k-1})_i = \sum_j \int_0^{\tilde c_i} \LL_j(\tau) c_j^{k-1} =
    \int_0^{\tilde c_i}  \sum_j \LL_j(\tau) c_j^{k-1} = \int_0^{\tilde
      c_i} \tau^{k-1} \D \tau = \frac1k \tilde c_i^k.
  \end{displaymath}
  since the $\LL_j$ interpolate exactly polynomials up do degree
  $s_1-1$  as above.

  For the proof of \R{eq:41}, we observe that
  \begin{eqnarray}
    \widehat{\tilde A} \one_{s_2} &=& (\one_{s_1\times s_2} - B^{-1}
                                      \tilde A^T )\tilde  B \one_{s_2} =
                                      (\one_{s_1\times s_2} - B^{-1}
                                      \tilde A^T ) \tilde b \nonumber\\
                                  &=&\one_{s_1} - B^{-1} \tilde A^T
                                      \tilde b,  \label{eq:42}
  \end{eqnarray}
  where in the last passage we have used the fact that $\sum \tilde
  b_i=1$.
  In the interpolative setting \R{eq:18}, we look at the term $B^{-1} \tilde A^T\tilde b= B^{-1} A^T \LL(\tilde
  c)^T \tilde b$. By construction,
\begin{displaymath}
  (\LL(\tilde c)^T \tilde b)_i = \tilde b_1 \LL_i(\tilde c_1) + \cdots +
  \tilde b_{s_2} \LL_i(\tilde c_{s_2}) = \int_0^1 \LL_i(\tau) \D \tau =
  b_i
\end{displaymath}
provided that the quadrature formula based on the nodes $\tilde b_i$
is exact for polynomials of degree at least $s_1-1$. It follows that
$\LL(\tilde c)^T \tilde b = b$. Further, we have $\tilde A^T b = B
(\one_{s_1} - c)$ because of the property of the primary RK
method. Thus, $B^{-1} \tilde A^T\tilde b= B^{-1} A^T \LL(\tilde
  c)^T \tilde b = \one_{s_1}-c$, which, substituted in \R{eq:42}
  completes the proof.

  In the collocative setting \R{eq:19},
  \begin{eqnarray*}
    (\tilde A^T \tilde b)_i &=& \sum_{j=1}^{s_2} \tilde a_{j,i} \tilde
                                b_j = \sum_{j=1}^{s_2} \tilde b_j \int_0^{\tilde c_j} \LL_i(\tau) \D \tau\\
                            &=& \sum_{j=1}^{s_2}\tilde b_j  f_i (\tilde
                                c_j), \qquad f_i (t) = \int_0^t
                                \LL_i(\tau) \D \tau\\
                                &=& \int_0^1 f_i(t) \D t
  \end{eqnarray*}
  since the $f_i$s are polynomials of degree $s_1$ and the quadrature
  formula based on the nodes $\tilde b_i$ has order at least $s_1$.
  Then $(B^{-1}  \tilde A^T \tilde b)_i = \frac1{b_i}  \int_0^1
  \int_0^{t} \LL_i(\tau)\D \tau \D t$. Applying integration by parts,
  $\int_0^1  \int_0^{t} \LL_i(\tau)\D \tau \D t = [t \int_0^t \LL_i(\tau)\D
  \tau]_{0}^1 - \int_0^1 t \LL_i(t) \D t = b_i - \sum_{j} b_j
  c_j\LL_i(c_j) = b_i - b_i c_i$. The second last passage follows
  provided that the integration formula with weights and nodes $(b,c)$ is exact for
  polynomials of degree $s_1+1$ and from $\LL_i(c_j) = \delta_{i,j}$.  Thus $B^{-1}
  \tilde A^T \tilde b  = \one_{s_1} -c$, which, substituted in
  \R{eq:41}, completes the proof.
\end{proof}

\begin{theorem}
  \label{th:order}
  Consider the methods \R{eq:20} under the conditions of
  Lemma~\ref{le:54}. Assume that $(b,c)$ and $(\tilde b, \tilde c)$
  $s_1$ and $s_2\leq s_1$ quadrature nodes and 
  weights of a quadrature formula of order at least $r\geq s_1$, so that
  \begin{equation}
    b^T c^n = \tilde b^T \tilde c^n = \frac1{n+1}, \qquad n= 0,
    \ldots, r
    \label{eq:quad_ord}
  \end{equation}
  (the power is intended componentwise). Then the
  interpolative \R{eq:18} and collocative \R{eq:19} methods \R{eq:20}
  have also order $r$.
\end{theorem}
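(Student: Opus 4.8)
The plan is to verify the order conditions for the additive (partitioned) Runge--Kutta system \R{eq:20} by the usual rooted-tree / $B$-series argument for P-series, exactly as for PRK and GARK methods in \cite{sandu15gark,hairer06gni}, and to check that all the necessary simplifying assumptions hold under the hypotheses of Lemma~\ref{le:54} and the quadrature hypothesis \R{eq:quad_ord}. The key observation is that \R{eq:20} is a partitioned method in which the $q$-component is advanced by the weights $b$ and the $p$-component by the combined increments $b^T F^1 + \tilde b^T F^2$; since $L^1 = \tfrac12\dot q^T\dot q - V^1(q)$ is separable, the internal-stage relations $Q_i = q_0 + h\sum_j a_{i,j}P_j$, $\tilde Q_i = q_0 + h\sum_j \tilde a_{i,j}P_j$ and $P_i = p_0 + h\sum_j \widehat a_{i,j}F^1(Q_j) + h\sum_k \widehat{\tilde a}_{i,k}F^2(\tilde Q_k)$ fall into the framework for which order conditions are indexed by bi-colored (here tri-colored, distinguishing $F^1$- and $F^2$-vertices on the $p$-side) rooted trees.

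The concrete steps I would carry out are: (i) record the ``$C(\eta)$-type'' stage-order conditions --- namely $Ac^{k-1}=c^k/k$ for the primary method (assumed), $\tilde A c^{k-1}=\tilde c^k/k$ for $k\le s_1-1$ and $\sum_j\tilde a_{i,j}=\tilde c_i$ (both supplied by Lemma~\ref{le:54}), together with $\widehat A\one = c$ (standard for a symplectic PRK pair, following from $\sum_i b_i a_{i,j}=b_j(1-c_j)$) and $\widehat{\tilde A}\one_{s_2}=c$ (equation \R{eq:41} of Lemma~\ref{le:54}); (ii) record the ``$B(r)$-type'' quadrature conditions $b^T c^n = \tilde b^T \tilde c^n = 1/(n+1)$ for $n=0,\dots,r$, which is precisely \R{eq:quad_ord}; (iii) invoke the standard reduction lemma (Butcher's simplifying-assumptions argument, in the coloured-tree version used in \cite{sandu15gark}) which shows that, once $B(r)$ holds and the stage vectors reproduce polynomials up to the appropriate degree, every order condition attached to a tree of order $\le r$ reduces to a quadrature identity already contained in $B(r)$; (iv) conclude that \R{eq:20} has order $r$ in both the interpolative and collocative variants, since in each case Lemma~\ref{le:54} guarantees exactly the stage-order identities the reduction needs. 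A small point to make explicitly is that the $\tilde A$-stage order only reaches $k=s_1-1$, which is harmless because $r\ge s_1$ means the $\tilde Q$-stages are inputs to $F^2$-vertices that are never the root and never carry more than $r-1\le$ (enough) subtree structure; this is where one should be slightly careful.

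The main obstacle I expect is precisely this last point: showing that the one-degree shortfall in the $\tilde A$ stage order (and, for the collocation variant, the fact that $\widehat{\tilde A}$ is only guaranteed to satisfy the row-sum condition \R{eq:41} rather than higher-order analogues) does not spoil any order-$r$ tree condition. The way around it is the standard weight-transfer trick: for any tree whose root increment uses $b$ on the $q$-side one pushes the $b$-summation through using $B(r)$; for any internal $\tilde Q$-subtree feeding an $F^2$-vertex, the subtree has order strictly less than $r$, hence strictly less than or equal to $s_1-1$ once one accounts for the root and the connecting edge, so the available identity $\tilde A c^{k-1}=\tilde c^k/k$ for $k\le s_1-1$ is exactly enough. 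Making this bookkeeping precise --- that no order-$\le r$ tree ever demands $\tilde A c^{k-1}=\tilde c^k/k$ with $k\ge s_1$ --- is the only genuinely non-routine part; everything else is the by-now-standard coloured-tree manipulation, which I would state rather than reproduce.
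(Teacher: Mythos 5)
Your overall strategy (coloured-tree order conditions plus simplifying assumptions, as for GARK methods) is legitimate in principle, but the step you yourself flag as the crux contains an error that the rest of the argument cannot absorb. You claim that any $\tilde Q$-subtree attached to an $F^2$-vertex has order ``strictly less than $r$, hence strictly less than or equal to $s_1-1$''; this implication is false whenever $r>s_1$, which is exactly the interesting regime (for the Lobatto--Gauss family $r=2(s_1-1)$, so already for $s_1=4$ an order-$6$ tree can contain $\tilde Q$-subtrees of order $4$ or $5$, well beyond the stage order $s_1-1=3$ guaranteed by Lemma~\ref{le:54}). Consequently your reduction does not close: with only $B(r)$, the $C$-type identities $\tilde A c^{k-1}=\tilde c^k/k$ for $k\le s_1-1$, and the bare row-sum conditions $\widehat A\one_{s_1}=c$ and $\widehat{\tilde A}\one_{s_2}=c$, the classical counting (order at most $\eta+\xi+1$ and at most $2\eta+2$) cannot reach $r=2s_1-2$ without $D$-type conditions, and you never bring any into play. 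The ``weight-transfer trick'' you allude to is precisely where identities such as $\tilde b^T\LL(\tilde c)=b^T$ and $\tilde b^T\tilde A=b^T(I-\diag{c})$ (or, equivalently, the symplecticity of the pair, which eliminates the superfluous trees) must be invoked; declaring the bookkeeping ``the only genuinely non-routine part'' and then not doing it leaves the theorem unproved.

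For comparison, the paper does not run the tree machinery at all: it reduces the claim to showing that the two derived quadrature rules --- $\sum_{i}\tilde b_i\,\tilde f_i$ with $\tilde f_i$ obtained by interpolating \R{eq:quad_interp}, respectively collocating \R{eq:quad_coll}, the integrand from the primary nodes --- integrate $x^n$ exactly for $n=0,\dots,r$. For $n\le s_1-1$ this is just exactness of Lagrange interpolation on $s_1$ nodes; for $n=s_1,\dots,r$ it follows from $\tilde b^T\LL(\tilde c)=b^T$ (interpolation) and $\tilde b^T\tilde A=b^T(I-\diag{c})$ (collocation), both established inside the proof of Lemma~\ref{le:54}, combined with \R{eq:quad_ord}. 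These two identities are exactly the ingredient your sketch is missing; if you want to persist with the tree route you must import them as $D$-type simplifying assumptions and redo the subtree-order count correctly.
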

\begin{proof}
To prove the theorem it is sufficient to show that that quadrature formula 
\emph{interpolating} the nodes $\tilde c$ using 
  the nodes $c$,
  \begin{equation}
    \int_0^1 f(x) \D x \approx \sum_{i=1}^{s_2} \tilde b_i
   \tilde f_i \qquad \tilde f_i = \sum_{j=1}^{s_1} \LL_j (\tilde c_i) f(c_j)
    \label{eq:quad_interp}
  \end{equation}
  as well as the quadrature formula \emph{collocating} the
  nodes $\tilde c$,
  \begin{equation}
    \label{eq:quad_coll}
    \int_0^1 f(x) \D x \approx \sum_{i=1}^{s_2} \tilde b_i
   \tilde f_i \qquad \tilde f_i = \int_0^{\tilde c_i} \sum_{j=1}^{s_1}
   \LL_j (x) f'(c_i) \D x
  \end{equation}
  have also order $r$ when $f(x) = x^n$,
  $n=0,\ldots,r$, for which $\int_0^1 x^{n} \D x = \frac1{n+1}$.

 We start with proving \R{eq:quad_interp}  for the interpolative formulas.
  For $n=0, \ldots, s_1-1$ the statement is immediate as
the function $\sum_j \LL_j(x) f(c_j)$ exactly
interpolates polynomials of degree up to degree $n=s_1-1$, hence
$\sum_j \LL_j(\tilde c_i) c_j^n = \tilde c_i^n$. Hence by virtue of
\R{eq:quad_ord} the statement follows.

When $n = s_1, \ldots, r$, note that $\sum_{i=1}^{s_2} \tilde b_i
   \tilde x_i^n = \tilde b^T \tilde x^n = \tilde b^T \LL(\tilde c)
   c^n$, where $\tilde x^n = \LL(\tilde c) c^n$. As shown 
   in Lemma~\ref{le:54}, $\tilde b^T \LL(\tilde c) = b^T$, hence  $\sum_{i=1}^{s_2} \tilde b_i
   \tilde x_i^n =  b^T c^n=\frac1{n+1}$ and 
   the statement follows from the assumption \R{eq:quad_ord}.

   For the collocative formulas and \R{eq:quad_coll}, when $f(x) = x^n$, we have $f'(x) =
   nx^{n-1}$ so that the interpolation $\sum_j \LL_j(x) c_j^{n-1} =
   x^{n-1}$ is exact for polynomials of degree $n=0, \ldots,
   s_1$. Consequently, $\tilde f_i = \tilde c_i^n$ and the statement
   follows. When $n = s_1+1, \ldots, r$, we refer again to the
   computations in Lemma~\ref{le:54}: $(\tilde b^T \tilde A)_i =
   \int_0^1 \int_0^T \LL_i (\tau) \D \tau= b^T (I-\diag c)$ (the last
   passage follows integrating by part). Therefore
   \begin{eqnarray*}
     \tilde b^T \tilde f &=& b^T (I-\diag c) n c^{n-1} \\
     &=& n b^T c^{n-1} - n b^T c^n = n \frac1n - n \frac{1}{n+1} = \frac1{n+1},
   \end{eqnarray*}
   which completes the proof.
 \end{proof}
 We are especially interested on the family of methods generated by
 the Lobatto IIIA-B (primary method) and Gauss-Legendre (secondary
 method) of the same order ($r=2s_2=2(s_1-1)$). These quadrature formulas
 are superconvergent and the proof of superconvergence  is heavily
 based on the roots and 
weights of the corresponding orthogonal polynomials, so that, in
principle, the interpolation might destroy the super
convergence. Fortunately, this does not happen because the methods
satisfy the hypotheses of Theorem~\ref{th:order}, and the order is
preserved. This statement is summarized in the Corollary below.
 \begin{coroll}
  The methods \R{eq:20} with primary method Lobatto IIIA-B with $s_1$
   stages and secondary method Gauss--Legendre
   with $s_2 = s_1-1$ stages has order
   $r=2(s_1-1)=2s_2$ both for coefficients based on interpolation  and collocation.
\end{coroll}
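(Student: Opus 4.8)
The plan is to verify that the Lobatto IIIA-B / Gauss--Legendre pair satisfies all the hypotheses of Theorem~\ref{th:order} and Lemma~\ref{le:54}, so that the claimed order follows immediately. The key numerical facts are: the Lobatto quadrature with $s_1$ nodes (endpoints included) has order $2s_1-2$, and the Gauss--Legendre quadrature with $s_2 = s_1-1$ interior nodes has order $2s_2 = 2s_1-2$. Since $r = 2s_1-2$ and both quadratures are exact for polynomials of degree up to $r-1 = 2s_1-3 \geq s_1$ (for $s_1 \geq 2$, trivially $2s_1-3 \geq s_1 \iff s_1 \geq 3$, with the $s_1=2$ case, giving the second-order IMEX, handled directly), condition \R{eq:quad_ord} holds with the common order $r \geq s_1$. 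This is the main bookkeeping step: check that $r \geq s_1$ and that the order of each quadrature is at least $r$.

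Next I would check the two hypotheses of Lemma~\ref{le:54}. The primary method must satisfy $Ac^{k-1} = \tfrac1k c^k$: this is the standard simplifying assumption $C(s_1-1)$, which holds for Lobatto IIIA because it is a collocation method on the $s_1$ Lobatto nodes. For the \textbf{interpolation} branch I also need that the Gauss--Legendre quadrature $(\tilde b, \tilde c)$ is exact for polynomials of degree at least $s_1-1$ (true since its order $2s_1-2 \geq s_1-1$), and that the primary method satisfies $\sum_i b_i a_{i,j} = b_j(1-c_j)$, i.e. the symplecticity-type condition $B(s_1)$ for the Lobatto pair --- this is exactly what makes Lobatto IIIA-B a symplectic PRK pair, so it is automatic. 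For the \textbf{collocation} branch I need both $(b,c)$ and $(\tilde b,\tilde c)$ to have order at least $s_1+1$; the Gauss--Legendre side gives $2s_1-2 \geq s_1+1$ for $s_1 \geq 3$, and the Lobatto side gives $2s_1-2 \geq s_1+1$ likewise, with $s_1=2$ again checked by hand. Hence \R{eq:41} holds and all premises of Theorem~\ref{th:order} are met.

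With Theorem~\ref{th:order} in force, the method \R{eq:20} has order equal to $r = 2(s_1-1) = 2s_2$ in both the interpolation and collocation constructions, which is the assertion. The one subtlety worth flagging --- and the real content of the corollary as the authors emphasize --- is that the order $2s_1-2$ of the Lobatto and Gauss quadratures is a \emph{superconvergence} phenomenon that rests on the specific algebraic structure of the orthogonal polynomials, and a priori one might fear that replacing the genuine stage values by \emph{interpolated} ones (the matrix $\tilde A = \LL(\tilde c)A$) degrades this to the naive order $s_1-1$. The key identity that saves the day, already established inside the proof of Lemma~\ref{le:54}, is $\tilde b^T \LL(\tilde c) = b^T$: the interpolation weights collapse back onto the primary weights, so $\tilde b^T \tilde x^n = b^T c^n = \tfrac1{n+1}$ for all $n \le r$. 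So the main obstacle is not a computation but recognizing that no separate superconvergence argument is needed --- everything has already been absorbed into the hypotheses of the preceding theorem, and the proof is essentially a verification checklist.
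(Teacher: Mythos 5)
Your proposal is correct and follows exactly the route the paper intends: the paper offers no explicit proof of the corollary beyond asserting that the Lobatto/Gauss--Legendre pair "satisfies the hypotheses of Theorem~\ref{th:order}", and your write-up simply carries out that verification checklist (quadrature orders $2s_1-2$, the simplifying assumptions for Lobatto IIIA, exactness thresholds for the interpolation and collocation branches, low-$s_1$ edge cases by hand). The only quibbles are cosmetic --- the condition $\sum_i b_i a_{i,j}=b_j(1-c_j)$ is the simplifying assumption usually labelled $D(1)$ rather than $B(s_1)$, and the off-by-one between "order $r$" and "exact for $x^n$, $n\le r$" in \R{eq:quad_ord} is inherited from the paper itself --- neither affects the argument.
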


\section{P-stability}
\label{sec:P-stability}

P-stability is a desirable property when applying a numerical method to highly
oscillatory systems. The test model is the harmonic oscillator
\begin{equation}
  \label{eq:25}
  \begin{bmatrix}
    q'\\p'
  \end{bmatrix} =
  \begin{bmatrix}
    0&1 &  \\ -\omega^2 & 0 
  \end{bmatrix} \begin{bmatrix}
    q\\p
  \end{bmatrix}, \qquad \omega \in \RR^+,
\end{equation}
whose exact solution can be written as
\begin{equation}
  \label{eq:26}
  \begin{bmatrix}
    q(t_0+h)\\
    p(t_0+h)
  \end{bmatrix} = D_\omega \Theta(\mu) D_\omega^{-1}
  \begin{bmatrix}
    q(t_0)\\p(t_0)
  \end{bmatrix},\quad \Theta(\mu) =
  \begin{bmatrix}
    \cos \mu & \sin \mu \\ -\sin \mu &\cos\mu,
  \end{bmatrix}, \qquad  \mu = \omega h
\end{equation}
where $D_\omega = \diag{1, \omega}$.
It is well known that the application of a $s$-stages PRK pair with coefficient $(A,
b)$ and $(\widehat A, b)$ yields a numerical approximation 
\begin{equation}
  \label{eq:27}
  \begin{bmatrix}
    q_1\\
    p_1
  \end{bmatrix} = D_\omega M(\mu) D_\omega^{-1}
  \begin{bmatrix}
    q_0\\p_0
  \end{bmatrix}
\end{equation}
with $2\times 2$ stability matrix $M(\mu)$
\begin{equation}
  \label{eq:28}
  M(\mu) = I_2 + \mu
  \begin{bmatrix}
    O & b^T\\ -b^T & 0 
  \end{bmatrix}
  \begin{bmatrix}
    I_s & -\mu A\\
    \mu \widehat A & I_s
  \end{bmatrix}^{-1}
  \begin{bmatrix}
    \one_s & O\\ O & \one_s
  \end{bmatrix}.
\end{equation}
We are interested in methods that preserve the unit modulus of the
eigenvalues of the rotation matrix $\Theta(\mu)$.
\begin{definition}
  A numerical method is \emph{P-stable} if for all $\mu \in \RR$ the
  eigenvalues $\lambda_i(\mu)$, $i=1,2$ of $M(\mu)$ satisfy
  \begin{itemize}
  \item $|\lambda_i(\mu)| = 1$, $i=1,2$ and $\lambda_1(\mu)\not=
    \lambda_2(\mu)$; or
  \item $\lambda_1(\mu) = \lambda_2(\mu) = \pm1$ and the eigenvalues
    possesses two distinct eigenvectors.
  \end{itemize}
\end{definition}
It is well known  that symmetric RK methods are P-stable, and, as a
consequence, the methods Lobatto IIIA and Lobatto IIIB, taken individually, are
P-stable. However, the PRK combination Lobatto IIIA-B, which
include the Verlet scheme for order 2, \emph{is not  P-stable}
\cite{jay95hos,mclachlan11lso}.
Motivated by the positive results of the IMEX method, that was proven
to be P-stable (\emph{unconditionally stable}, \cite{mclachlan14mti}),
we study the methods \R{eq:20} and, the same spirit of the
IMEX methods, the oscillatory part is treated by the secondary method
(i.e.\ we set $F^1=0$).

\begin{theorem}{}
  The matrix $M(\mu)$ for the method \R{eq:20} is given as
  \begin{equation}
    \label{eq:Mlambda}
    M(\mu) = I_2 + \mu
    \begin{bmatrix}
      0 & b^T \\ - \tilde b^T & 0
     \end{bmatrix} 
      \begin{bmatrix}
        I_{s_2} & -\mu \tilde A \\
        \mu \widehat{\tilde A} & I_{s_1} 
      \end{bmatrix}^{-1}
      \begin{bmatrix}
        \one_{s_2} & 0\\
        0 & \one_{s_1}
      \end{bmatrix}, \qquad \mu = \omega h.
    \end{equation}
    Moreover, as the methods are symplectic, 
    \begin{equation}
      \label{eq:detMlambda}
      \det M(\mu)=1.
    \end{equation}
\end{theorem}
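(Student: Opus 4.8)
The plan is to imitate the classical derivation of the PRK stability matrix \R{eq:28}, now carried out for the additive scheme \R{eq:20}. Applying \R{eq:20} to the test problem \R{eq:25} amounts to taking $V^1=0$ and $V^2(q)=\frac12\omega^2q^2$, so that $F^1\equiv 0$ and $F^2(q)=-\omega^2 q$; since $\de L^1/\de\dot q=\dot q$ one has $\dot Q_i=P_i$, and hence the $Q_i$-stages drop out of the recursion and only the secondary stages $\tilde Q_i$ remain. In vector form, with $\tilde A\in\RR^{s_2\times s_1}$ and $\widehat{\tilde A}\in\RR^{s_1\times s_2}$, the stage and update equations become
\begin{displaymath}
  \tilde Q = q_0\one_{s_2}+h\,\tilde A\,P,\qquad P = p_0\one_{s_1}-h\omega^2\,\widehat{\tilde A}\,\tilde Q,\qquad q_1 = q_0+h\,b^T P,\qquad p_1 = p_0-h\omega^2\,\tilde b^T\tilde Q,
\end{displaymath}
where $q_1$ is read off from \R{eq:3} together with $\dot Q_i=P_i$.

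Next I would pass to the $D_\omega$-scaled variables exactly as in the passage from \R{eq:26} to \R{eq:27}: set $(\bar q_0,\bar p_0)=D_\omega^{-1}(q_0,p_0)$ and scale the internal stages by $u=\tilde Q$, $v=\omega^{-1}P$. With $\mu=\omega h$, the two factors $h$ and $h\omega^2$ both collapse to $\mu$, and the stage equations turn into the block linear system
\begin{displaymath}
  \begin{bmatrix} I_{s_2} & -\mu\tilde A\\ \mu\widehat{\tilde A} & I_{s_1}\end{bmatrix}\begin{bmatrix}u\\ v\end{bmatrix}=\begin{bmatrix}\one_{s_2}&0\\ 0&\one_{s_1}\end{bmatrix}\begin{bmatrix}\bar q_0\\ \bar p_0\end{bmatrix},
\end{displaymath}
while the update reads $(\bar q_1,\bar p_1)^T=(\bar q_0,\bar p_0)^T+\mu\left[\begin{smallmatrix}0&b^T\\ -\tilde b^T&0\end{smallmatrix}\right](u,v)^T$. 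Whenever the block matrix is nonsingular (which, its determinant being a polynomial in $\mu$ equal to $1$ at $\mu=0$, is the case for all but finitely many $\mu$), I solve for $(u,v)$ and substitute into the update, obtaining $(q_1,p_1)^T=D_\omega M(\mu)D_\omega^{-1}(q_0,p_0)^T$ with $M(\mu)$ precisely as in \R{eq:Mlambda}. The only point needing care here is the bookkeeping of the two distinct stage dimensions $s_1$ and $s_2$ and of the rescaling that absorbs $h\omega^2$ into $\mu$; there is no real obstacle.

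For the determinant identity \R{eq:detMlambda}, I would argue by symplecticity. Applied to the harmonic-oscillator Hamiltonian $H=\frac12 p^2+\frac12\omega^2 q^2$, the one-step map of \R{eq:20} is, for every $\mu$ at which it is defined, the linear map $(q_0,p_0)\mapsto(q_1,p_1)$ with matrix $S(\mu)=D_\omega M(\mu)D_\omega^{-1}$, and by the symplecticity of the methods \R{eq:20} (established above) it preserves $\D q\wedge\D p$; a linear map of $\RR^2$ preserving the symplectic form has determinant $1$, and since the determinant is unchanged under the conjugation by $D_\omega$, we get $\det M(\mu)=\det S(\mu)=1$. Alternatively one can check this directly, writing $M(\mu)=I_2+\mu\,UK^{-1}V$ with $K$ the block matrix above, using $\det(I_2+\mu UK^{-1}V)=\det(K+\mu VU)/\det K$, and verifying from \R{eq:widehattildeA} and $\tilde b^T\one_{s_2}=1$ that the rank-two perturbation $\mu VU$ leaves the determinant unchanged; this is more laborious, so the symplecticity argument is to be preferred.
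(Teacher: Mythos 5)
Your argument is correct and is essentially the paper's own proof: both set $F^1=0$ in \R{eq:20}, assemble the stages into the block linear system with matrix $\bigl[\begin{smallmatrix} I_{s_2} & -h\tilde A\\ h\omega^2\widehat{\tilde A} & I_{s_1}\end{smallmatrix}\bigr]$, eliminate the stages and conjugate by $D_\omega$ to read off $M(\mu)$, and both deduce $\det M(\mu)=1$ from symplecticity (volume preservation) of the linear one-step map. Your extra care with the $D_\omega$-rescaling of the internal stages and the remark on generic invertibility of the block matrix only make explicit what the paper leaves implicit.
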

\begin{proof}
  For the test equation \R{eq:25} ($F^{1}=0$), the method \R{eq:20} can be written
  as
  \begin{eqnarray*}
    P_i &=& p_0 - h \omega^2 \sum_{j=1}^{s_2} \widehat{\tilde a}_{i,j}
            \tilde Q_j\\
    \tilde Q_i &=& q_0 + h \sum_{j=1}^{s_2} \tilde a_{i,j} P_j\\
    p_1 &=& p_0 - h \omega^2 \sum_{j=1}^{s_2}\tilde b_j  \tilde Q_j \\
    q_1&=& q_0 + h \sum_{j=1}^{s_1} b_j P_j.
  \end{eqnarray*}
To ease notation, we denote by capital letters $\tilde Q$ the vector of the internal
stages $\tilde Q_i$, by $P$ the vector of the internal momenta $P_i$,
and abuse notation, to avoid the use of tensor products. So, for
instance, $\tilde A P$ has, as an $i$-component, the vector $\tilde
a_{i,1} P_1 + \cdots +\tilde a_{i,s_1} P_{s_1}$.

In block form, we have
\begin{displaymath}
  \begin{bmatrix}
    I & -h\tilde A\\
    \omega^2 h \widehat{\tilde A}& I
  \end{bmatrix}
  \begin{bmatrix}
    \tilde Q \\ P 
  \end{bmatrix} =
  \begin{bmatrix}
    q_0 \\
    p_0 
  \end{bmatrix}
\end{displaymath}
which we use to solve for the $\tilde Q$ and $P$.
From $q_1 = q_0 + h b^T P $ and $p_1 = p_0-h \omega^2
\tilde b^T \tilde Q$, we get
\begin{eqnarray}
  \begin{bmatrix}
    q_1\\ p_1 
  \end{bmatrix} &=&
  \begin{bmatrix}
    q_0 \\ p_0 
  \end{bmatrix} + h
  \begin{bmatrix}
    0 & b^T \\ -\omega^2 \tilde b^T & 0 
  \end{bmatrix}
  \begin{bmatrix}
    I & - h\tilde A \\ h\omega^2 \widehat{\tilde A}& I
  \end{bmatrix}^{-1}
  \begin{bmatrix}
    q_0 \\ p_0 
  \end{bmatrix}
  \nonumber \\
  &=& D_\omega M(\mu) D_\omega^{-1}\!\!
      \begin{bmatrix}
        q_0\\ p_0
      \end{bmatrix}
\end{eqnarray}
where the last passage follows in a manner very
  similar as corresponding proof for PRK methods with $M(\mu)$ as in \R{eq:Mlambda}. 

  As for \R{eq:detMlambda}, if the method is
  symplectic, then it must be volume preserving for Hamiltonian
  systems, which, in this case implies that  $\det M(\mu)=1$.
\end{proof}
Since the eigenvalues of the matrix $M(\mu)$ are
\begin{displaymath}
  \lambda_i(\mu) = \frac12 \tr M \pm \sqrt {(\frac12 \tr M)^2
    - \det M}, \qquad i=1,2,
\end{displaymath}
because of the determinant condition \R{eq:detMlambda} one has
$\lambda_1 \lambda_2=1$. Hence the eigenvalues lie on the unit circle
if and only if 
\begin{equation}
  \label{eq:30}
  |\tr M(\mu)| \leq 2.
\end{equation}
In addition, when $\tr M =2$, the eigenvalues are both equal to $1$,
while for $\tr M = -2$, the eigenvalues are both equal to $-1$. When
studying P-stability, we will refer to the function
\begin{displaymath}
  \frac 12 | \tr M(\mu)|
\end{displaymath}
as \emph{stability function} of the method.

Provided that the method is P-stable, it can be interpreted as an
oscillator with a modified frequency. Comparing with the matrix
$\Theta(\mu)$ in \R{eq:26}, we have
\begin{equation}
  \label{eq:34}
  \frac 12 \tr M(\mu) = \cos (\tilde \omega h)=\cos(\tilde \mu),
  \qquad \tilde \mu = \tilde \omega h
\end{equation}
corresponding to  a modified frequency $\tilde \omega$ satisfying
\begin{equation}
  \label{eq:43}
  \tilde \mu= \tilde \omega h=\arccos(\frac12 \tr M (\mu)), \qquad \mu=\omega h.
\end{equation}

\begin{coroll}
  The IMEX method is P-stable.
\end{coroll}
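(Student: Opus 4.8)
The plan is to exhibit the IMEX scheme as the member of the family \R{eq:20} whose primary method is the two--stage Lobatto IIIA--B pair (its slow half being the St\"ormer/Verlet method) and whose secondary method is the one--stage Gauss--Legendre quadrature, i.e.\ the midpoint rule, with $\tilde A$ built by the interpolation construction \R{eq:18}; this is exactly the case $s_1=2$, $s_2=s_1-1=1$. First I would record the data: $c=(0,1)^T$, $b=(\tfrac12,\tfrac12)^T$, $\tilde c=(\tfrac12)$, $\tilde b=(1)$, with $A$ and $\widehat A$ the $2\times 2$ Lobatto IIIA and IIIB matrices. The primary cardinal polynomials on $\{0,1\}$ are $\LL_1(t)=1-t$ and $\LL_2(t)=t$, so $\LL(\tilde c)=(\tfrac12,\tfrac12)$ and, by \R{eq:18}, $\tilde A=\LL(\tilde c)A=(\tfrac14,\tfrac14)$, while \R{eq:widehattildeA} gives $\widehat{\tilde A}=(\one_{2\times1}-B^{-1}\tilde A^{T})\tilde B=(\tfrac12,\tfrac12)^{T}$. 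Reading off \R{eq:20} with $s_1=2$ one has $Q_1=q_0$ and $Q_2=q_1$, hence $\tilde Q_1=\tfrac12(q_0+q_1)$: the fast force is sampled at the midpoint of the position step while the slow force is treated by the trapezoidal/Verlet rule, which is precisely the IMEX method described in the Introduction.

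The second step is to compute the stability matrix. P-stability is tested on \R{eq:25} with $F^1=0$, and for that reduction only the secondary data $(\tilde A,\widehat{\tilde A},b,\tilde b)$ enter, so $M(\mu)$ is given by \R{eq:Mlambda}. For $s_1=2$, $s_2=1$ the inner block matrix in \R{eq:Mlambda} is $3\times 3$ with an arrowhead pattern, so inverting it and multiplying out is a short computation. Assembling the pieces gives
\begin{displaymath}
  M(\mu)=
  \begin{bmatrix}
    1-\dfrac{2\mu^{2}}{4+\mu^{2}} & \dfrac{4\mu}{4+\mu^{2}}\\[1ex]
    -\dfrac{4\mu}{4+\mu^{2}} & 1-\dfrac{2\mu^{2}}{4+\mu^{2}}
  \end{bmatrix},\qquad
  \tfrac12\tr M(\mu)=\frac{4-\mu^{2}}{4+\mu^{2}},
\end{displaymath}
with $\det M(\mu)=1$ as already guaranteed by \R{eq:detMlambda}.

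The last step is the P-stability criterion \R{eq:30}. Since $(4+\mu^{2})^{2}-(4-\mu^{2})^{2}=16\mu^{2}\ge 0$ for every $\mu\in\RR$, we have $|\tfrac12\tr M(\mu)|\le 1$, i.e.\ $|\tr M(\mu)|\le 2$; combined with $\det M(\mu)=1$ this places both eigenvalues on the unit circle with $\lambda_1\lambda_2=1$. Moreover $\tfrac12\tr M(\mu)=1$ only at $\mu=0$, where $M(0)=I_2$ has two independent eigenvectors, and $\tfrac12\tr M(\mu)=-1$ never occurs; for $\mu\ne 0$ one has $|\tfrac12\tr M(\mu)|<1$, so $\lambda_1\ne\lambda_2$. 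In every case the alternatives of the P-stability definition are met, so the IMEX method is P-stable; as a byproduct $\cos\tilde\mu=(4-\mu^{2})/(4+\mu^{2})$ recovers the familiar modified frequency $\tilde\omega=\tfrac2h\arctan(\tfrac{\omega h}{2})$, consistently with \R{eq:43}.

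There is no genuine obstacle: the argument is a direct verification. The one point to handle with care is the identification in the opening step --- it is the \emph{interpolation} construction \R{eq:18}, and not the collocation one \R{eq:19} (which gives $\tilde A=(\tfrac38,\tfrac18)$, a different scheme; one checks it is not P-stable, violating the bound for $|\mu|>4$), that reproduces the classical IMEX, as signalled by its use of a linear interpolation for the internal stage of the midpoint rule --- and, at the level of the definition, the degenerate value $\mu=0$. Everything else reduces to the short $3\times 3$ inversion displayed above.
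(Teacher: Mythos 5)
Your proposal is correct and follows essentially the same route as the paper: compute the $2\times 2$ stability matrix of the IMEX explicitly and verify the trace bound $|\tr M(\mu)|\le 2$; your $\tfrac12\tr M(\mu)=(4-\mu^2)/(4+\mu^2)$ is exactly the paper's $(1-\nu^2)/(1+\nu^2)$ with $\nu=\mu/2$. You merely supply extra detail the paper leaves implicit — the identification of IMEX as the $s_1=2$, $s_2=1$ interpolation member of \R{eq:20} (matching the Appendix coefficients) and the check of the two-eigenvector condition at the degenerate value $\mu=0$ — which is welcome but not a different argument.
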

\begin{proof}
  By direct computation, the IMEX method has stability matrix
  \begin{displaymath}
    M(\mu) = \frac{1}{1+\nu^2}
    \begin{bmatrix}
      1-\nu^2 & \mu\\
      -\mu & 1-\nu^2
    \end{bmatrix}, \qquad \nu = \frac{\mu}2,
  \end{displaymath}
  with trace $\tr M = 2 \frac{1-\nu^2}{1+\nu^2}$ which always
  satisfies \R{eq:30}. 
\end{proof}
The modified frequency of the IMEX is thus $\tilde \omega = \frac1h \arccos(
\frac{1-\lambda^2 h^2/4}{1+\lambda^2 h^2/4})$, as already found in \cite{mclachlan14mti}.

Because of the symplecticity of the methods \R{eq:20}, it is obvious that in
order to study the P-stability it is sufficient to
look at the diagonal elements $M_{1,1}$ and $M_{2,2}$ of the matrix
$M(\mu)$ in \R{eq:Mlambda}. By direct computation, one has that
\begin{eqnarray}
  M_{1,1} &=& 1-\mu^2 b^T \widehat{\tilde A} (I_{s_2}+\mu^2\tilde A
              \widehat{\tilde A})^{-1} \one_{s_2}
              \label{eq:M11}\\
  M_{2,2} &=&  1-\mu^2 \tilde b^T \tilde A (I_{s_1}+\mu^2
              \widehat{\tilde A} \tilde A)^{-1} \one_{s_1}.
              \label{eq:M22}
\end{eqnarray}

\begin{lemma}
  \label{th:M11eqM22}
  Under the requirements of the Lemma~\ref{le:54}, \R{eq:M11}-\R{eq:M22} can be
written as  
\begin{eqnarray}
  \label{eq:M11_c}
  M_{1,1} &=& 1-\mu^2 b^T (I_{s_1} + \mu^2 \widehat{\tilde A} \tilde
              A)^{-1} c \\
  \label{eq:M22_ct}
  M_{2,2} &=& 1- \mu^2 \tilde b^T (I_{s_2} + \mu^2 \tilde
              A\widehat{\tilde A})^{-1} \tilde c.
\end{eqnarray}
 Moreover, if
  \begin{equation}
  \label{eq:46}
  b^T (\widehat{\tilde A} \tilde A)^k c = \tilde b^T (\tilde A
  \widehat{\tilde A})^k \tilde c, \qquad k =0, \ldots, \min\{s_1,s_2\}-1,
\end{equation}
then $M_{1,1} = M_{2,2}$.
\end{lemma}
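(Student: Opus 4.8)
The plan is to establish the two rewritten formulas \R{eq:M11_c} and \R{eq:M22_ct} first, and then deduce $M_{1,1}=M_{2,2}$ from the power-moment identity \R{eq:46} by expanding the resolvents as Neumann series.

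For the first part, I would start from \R{eq:M11}. The key observation is the "push-through" identity for resolvents: for any matrices $X$ (size $s_1\times s_2$) and $Y$ (size $s_2\times s_1$) one has $X(I_{s_2}+YX)^{-1}=(I_{s_1}+XY)^{-1}X$, valid as a formal power-series identity in $\mu^2$ and hence as a rational identity. Applying this with $X=\widehat{\tilde A}$ and $Y=\tilde A$ moves the resolvent in \R{eq:M11} to the left of $\widehat{\tilde A}$, giving $M_{1,1}=1-\mu^2 b^T(I_{s_1}+\mu^2\widehat{\tilde A}\tilde A)^{-1}\widehat{\tilde A}\one_{s_2}$. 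Now invoke Lemma~\ref{le:54}, specifically \R{eq:41}, which says $\widehat{\tilde A}\one_{s_2}=c$; this yields \R{eq:M11_c}. The derivation of \R{eq:M22_ct} from \R{eq:M22} is entirely analogous but even simpler: no push-through is needed, one only uses $\tilde A\one_{s_1}=\tilde c$, which is the statement $\sum_j\tilde a_{i,j}=\tilde c_i$ recorded in Lemma~\ref{le:54} (the $k=1$ case of \R{eq:39}).

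For the second part, expand both resolvents as geometric (Neumann) series in the matrices $\widehat{\tilde A}\tilde A$ and $\tilde A\widehat{\tilde A}$ respectively:
\begin{displaymath}
  M_{1,1}=1-\mu^2\sum_{k\geq 0}(-\mu^2)^k\,b^T(\widehat{\tilde A}\tilde A)^k c,
  \qquad
  M_{2,2}=1-\mu^2\sum_{k\geq 0}(-\mu^2)^k\,\tilde b^T(\tilde A\widehat{\tilde A})^k\tilde c.
\end{displaymath}
By \R{eq:46} the coefficients agree for $k=0,\ldots,\min\{s_1,s_2\}-1$. The remaining point is that both $s_1\times s_1$ and $s_2\times s_2$ matrices $\widehat{\tilde A}\tilde A$ and $\tilde A\widehat{\tilde A}$ have the same nonzero eigenvalues, so their characteristic polynomials differ only by a power of $x$; hence each sequence of "moments" $m_k:=b^T(\widehat{\tilde A}\tilde A)^kc$ and $\tilde m_k:=\tilde b^T(\tilde A\widehat{\tilde A})^k\tilde c$ satisfies the \emph{same} linear recurrence of length $\min\{s_1,s_2\}$ coming from that common minimal polynomial (applied, for the $m_k$, using the relation $(\widehat{\tilde A}\tilde A)\widehat{\tilde A}=\widehat{\tilde A}(\tilde A\widehat{\tilde A})$ to transfer the Cayley--Hamilton relation of $\tilde A\widehat{\tilde A}$ across). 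Two sequences obeying the same recurrence and agreeing on enough initial terms coincide for all $k$; therefore the two power series are identical and $M_{1,1}=M_{2,2}$.

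The main obstacle is the bookkeeping in the last step: one must be careful that $\min\{s_1,s_2\}$ initial matches really do pin down the whole sequence, i.e.\ that the common nonzero spectrum of $\widehat{\tilde A}\tilde A$ and $\tilde A\widehat{\tilde A}$ has at most $\min\{s_1,s_2\}$ elements (counted with multiplicity in the minimal polynomial) and that the recurrence derived from Cayley--Hamilton on the smaller matrix transports correctly to the moments of the larger one. The push-through identity and the substitutions from Lemma~\ref{le:54} are routine; the equality of $M_{1,1}$ and $M_{2,2}$ is where the structural input \R{eq:46} is genuinely used, and phrasing the "same recurrence" argument cleanly is the part that needs care.
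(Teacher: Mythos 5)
Your proof follows essentially the same route as the paper's: the push-through identity for resolvents combined with $\widehat{\tilde A}\one_{s_2}=c$ and $\tilde A\one_{s_1}=\tilde c$ from Lemma~\ref{le:54} gives the rewrites, and the formal Neumann series together with the equal nonzero spectra of $\widehat{\tilde A}\tilde A$ and $\tilde A\widehat{\tilde A}$ plus Cayley--Hamilton gives $M_{1,1}=M_{2,2}$, exactly as in the paper (your spelling-out of the common linear recurrence for the moment sequences is in fact more careful than the paper's one-line appeal to Cayley--Hamilton). One small correction: passing from \R{eq:M22} to \R{eq:M22_ct} \emph{does} also require the push-through, since $\tilde A$ sits to the left of the $s_1\times s_1$ resolvent and must be moved to the right before it can act on $\one_{s_1}$ to produce $\tilde c$ — but this is precisely the identity you already stated, so nothing of substance is missing.
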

\begin{proof}
 We use the formal series $(I+G)^{-1} = \sum_k (-1)^k G^k$.
 The first part of the statement says that we can push
 $\widehat{\tilde A}$ and $\tilde{A} $ on the other right hand side using \R{eq:39} and
\R{eq:41} from Lemma~\ref{le:54}, that is $\tilde{A} \one_{s_1} = \tilde c$  and
$\widehat{\tilde A} \one_{s_2}= c$. 
  
 For the second part of the statement, if all the infinite
 terms of the series  in $M_{1,1}$ and $M_{2,2}$  are equal for
 $k=1,2, \ldots$, then the series are also equal, even
 if the series do not converge. To prove this, note that the matrices $(\tilde A
  \widehat{\tilde A}) $ and $(\widehat{\tilde A}\tilde A) $ have the
  same $n$ nonzero eigenvalues $\lambda_1, \ldots,
  \lambda_{n}$, $n\leq \min\{s_1, s_2\}$. By the Cayley--Hamilton
  theorem,  $G^{m}$ can obtained as a linear combination of $I,
  \ldots, G^{n-1}$ for  $m\geq n$. Therefore only the terms in
  \R{eq:46} need be checked.
\end{proof}
\textbf{Remark.} Note that for $k=0$, we have $b^Tc = \frac12 = \tilde b^T \tilde c$ is
always verified for methods of order at least one.

The combination Lobatto IIIA-B and Gauss-Legendre of the same
order satisfies the requirements of Lemma~\ref{th:M11eqM22}, hence it
is sufficient to check \R{eq:46} only up to $k=1$ (method of order 4)
and $k=2$ for the method of order six.

We show the verifications for the methods based on
\emph{interpolation}. 
For order 4, the proof is immediate for all $k$ because $\tilde c =
\frac12 \one_2$), hence
\begin{eqnarray}
  b^T (\widehat{\tilde A} \tilde A)^k c &=&  b^T \widehat{\tilde A}
                                            (\tilde A \widehat{\tilde
                                            A} )^{k-1} \tilde A c
                                            \nonumber\\
   &=& \tilde b^T (I - \diag {\tilde c}) (\tilde A \widehat{\tilde
                                            A} )^{k-1} \tilde
       A\widehat{\tilde A} \one_2  \label{eq:47} \\
  &=& \frac12 \tilde b^T  (\tilde A \widehat{\tilde
                                            A} )^{k} \one_2  \nonumber
  \\
  &=& \tilde b^T (\tilde A \widehat{\tilde
                                            A} )^{k} \tilde c \nonumber
\end{eqnarray}
where we have used $ b^T \widehat{\tilde A} = \tilde b^T (I - \diag
{\tilde c})$ and \R{eq:41}.
When going to higher order, a general proof of $M_{1,1} = M_{2,2}$
using an argument as above doesn't seem
straightforward because of in general $\widehat{\tilde A}\tilde
c^{k-1} \not= \frac{c^k}{k}$ for $k>1$ (see \R{eq:41}).
Yet, \R{eq:46} can be verified by direct computation. For
instance, for the order six combination
\begin{align}
  &\tilde b^T \tilde A \widehat{\tilde A} \tilde c =   b^T
                                                      \widehat{\tilde
    A} \tilde A  c =\frac1{24},\nonumber \\
  &\tilde b^T (\tilde A \widehat{\tilde A})^2 \tilde c =   b^T
                                                      (\widehat{\tilde A} \tilde A)^2  c =\frac1{720}.\nonumber
\end{align}

Our preliminary numerical tests seem to confirm that Lemma~\ref{th:M11eqM22}
yields for a larger class of methods, therefore we conjecture that
$M_{1,1}= M_{2,2}$ whenever the secondary quadrature has order
at least equal to the order of the primary method. 

\begin{theorem}
  The methods \R{eq:20} based on  Lobatto IIIA and Gauss--Legendre of
  order four and six with $\tilde A$ \emph{by interpolation} \R{eq:17} are
  \emph{P-stable} and correspond to oscillators with modified
  frequencies. These are
  \begin{equation}
    \label{eq:35}
    \tilde \omega h = \tilde \mu= \arccos \left(\frac{1-\frac5{12}\mu^2
      + \frac1{144}\mu^4}{1+\frac1{12}\mu^2
      + \frac1{144}\mu^4}\right) \qquad
    \mu = \omega h
\end{equation}
for the method of order four. The $\tilde \mu$ touches the line $-1$
at $\mu = 2\sqrt{3}$.

Moreover,
  \begin{equation}
    \label{eq:50}
    \tilde \omega h = \tilde \mu=  \arccos\left( \frac{
        1-\frac{9}{20} \mu^2+\frac{11}{600}\mu^4-\frac{1}{14400}\mu^6}{
        1+\frac1{20} \mu^2 + \frac1{600}\mu^4 + \frac{1}{14400}\mu^6}
       \right) \qquad
    \mu = \omega h
  \end{equation}
  for the methods of order six. The $\tilde \mu$ touches the line $-1$
  at $\mu = \sqrt{10}$ and $1$ at $\mu=2\sqrt{15}$. 
   
  The methods \R{eq:20} based on  Lobatto IIIA-B and Gauss-Legendre of
  order  four and six with $\tilde A$ \emph{by collocation} \R{eq:18} 
  \emph{are not P-stable}.

  The interval of stability in the positive
  half plane are:  $[0,4]$  for the method of order two, $[0,
  \frac6{11}\sqrt{33}]\cup [2\sqrt3, 3 \sqrt6]$ for  
  for the method of order four, and $[0, \sqrt{70-2\sqrt{905}}]\cup
  [\sqrt{10}, \frac85 \sqrt{15}]\cup [2\sqrt{15},  \sqrt{70+2\sqrt{905}}]$ for the methods of
  order six.
\end{theorem}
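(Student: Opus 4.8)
The plan is to reduce everything to the scalar stability function $\frac12\tr M(\mu)$, which by Lemma~\ref{th:M11eqM22} equals $M_{1,1}(\mu)$ since $M_{1,1}=M_{2,2}$ for the Lobatto IIIA-B/Gauss--Legendre pairs of order four and six. First I would compute $M_{1,1}(\mu)$ explicitly for each case. Using the interpolative coefficients \R{eq:18}, the matrices $\widehat{\tilde A}\tilde A$ are small ($s_1\times s_1$ with $s_1=3$ for order four, $s_1=4$ for order six), so $(I_{s_1}+\mu^2\widehat{\tilde A}\tilde A)^{-1}c$ can be written, via Cramer's rule, as a rational function of $\mu^2$ whose numerator and denominator have degree equal to the number of nonzero eigenvalues of $\widehat{\tilde A}\tilde A$, namely $s_2 = s_1-1$. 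Contracting with $b^T$ and simplifying gives exactly the rational functions displayed in \R{eq:35} and \R{eq:50}; I would present these as the outcome of a direct (if tedious) computation with the tableaux from the Appendix, perhaps recording the characteristic polynomial of $\tilde A\widehat{\tilde A}$ as the key intermediate object since its coefficients are the $\tilde b^T(\tilde A\widehat{\tilde A})^k\tilde c$ already evaluated in the text.

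Next I would verify P-stability, i.e.\ $|\tr M(\mu)|\le 2$ for all $\mu\in\RR$, equivalently $-1 \le \frac12\tr M(\mu)\le 1$. Writing $\frac12\tr M = \frac{N(\mu^2)}{D(\mu^2)}$ with the $N,D$ read off from \R{eq:35}/\R{eq:50}, note that $D$ is a sum of even powers of $\mu$ with positive coefficients (this is visible in both displays, and is where the interpolation choice matters), hence $D>0$ for all real $\mu$; so the two inequalities are equivalent to $D-N\ge 0$ and $D+N\ge 0$ for all $x=\mu^2\ge 0$. For order four, $D-N = \tfrac12\mu^2$ and $D+N = 2 - \tfrac13\mu^2 + \tfrac1{72}\mu^4 = \tfrac1{72}(\mu^2-12)^2 \ge 0$, which both hold, with equality in the second at $\mu^2=12$, i.e.\ $\mu=2\sqrt3$ — this gives the claimed contact with the line $-1$. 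For order six one does the same: $D-N$ and $D+N$ should each factor as a positive constant times a perfect square (in $\mu^2$), the double roots being $\mu^2=10$ and $\mu^2=60$, which yields the contact points $\mu=\sqrt{10}$ (line $-1$) and $\mu=2\sqrt{15}$ (line $+1$). At those double roots one must additionally check the eigenvector condition in the definition of P-stability — that $M(\mu)=\pm I_2$ there rather than a nontrivial Jordan block — which follows because $M(\mu)$ is symplectic with a double eigenvalue $\pm1$ and, by the explicit form \R{eq:Mlambda}, its off-diagonal entry $\mu b^T(\cdots)\one$ vanishes at those $\mu$ (one checks this from the same rational-function computation).

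For the collocative methods one argues the opposite way: compute $\frac12\tr M(\mu)$ using \R{eq:19} instead of \R{eq:18}, obtain again a rational function $\tilde N/\tilde D$, and exhibit a single real $\mu_\star$ at which $|\frac12\tr M(\mu_\star)|>1$ (equivalently $\tilde D - \tilde N < 0$ or $\tilde D+\tilde N<0$ there). Since P-stability must hold for \emph{all} $\mu$, one such point suffices to establish that these methods are not P-stable; I would simply display the relevant polynomial and note it changes sign. Finally, the stated stability intervals in the positive half-plane are extracted as the solution set of $|\frac12\tr M(\mu)|\le 1$: solving $D\pm N\ge 0$ for the order-two, -four and -six cases gives quadratic/quartic/sextic inequalities in $\mu$, whose roots are the endpoints $4$, $\tfrac6{11}\sqrt{33}$, $2\sqrt3$, $3\sqrt6$, and $\sqrt{70\pm2\sqrt{905}}$, $\sqrt{10}$, $\tfrac85\sqrt{15}$, $2\sqrt{15}$ listed; the union structure reflects that for the collocative stability function (or, for the interpolative order-six case, the regions between the double-root contact points) the inequality fails on intervening intervals.

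The main obstacle is purely computational: carrying out the $\mu^2$-rational-function reduction of $b^T(I+\mu^2\widehat{\tilde A}\tilde A)^{-1}c$ cleanly enough that the numerators and denominators factor into the perfect squares that make P-stability transparent. The conceptual content — $M_{1,1}=M_{2,2}$, $\det M=1$, so P-stability $\iff |\tr M|\le 2$ — is already in hand from the preceding lemmas; everything else is a matter of organizing the algebra around the characteristic polynomial of $\tilde A\widehat{\tilde A}$ and recognizing the resulting quartics/sextics as squares.
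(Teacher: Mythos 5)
Your proposal follows essentially the same route as the paper, whose proof simply invokes Lemma~\ref{th:M11eqM22} to reduce to $M_{1,1}$ and then reports the stability functions as the output of a symbolic computation. Your added verification is correct and makes the P-stability claim transparent: indeed $D-N=\tfrac12\mu^2$ and $D+N=\tfrac1{72}(\mu^2-12)^2$ for order four, and $D-N=\tfrac{\mu^2}{7200}(\mu^2-60)^2$, $D+N=\tfrac1{50}(\mu^2-10)^2$ for order six, which yields exactly the stated contact points; flagging the eigenvector check at the double eigenvalues $\pm1$ is a detail the paper glosses over.
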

\begin{proof}
  The methods satisfy Lemma~\ref{th:M11eqM22} therefore one has that
  $M_{1,1}=M_{2,2}$. Taking either 
  of them, the stability functions have been computed using a symbolic
  manipulator, as well as their points of intersections with the lines $\pm1$.
\end{proof}
A plot of the stability functions for the the LobattoIIIA and
Gauss--Legendre combinations by interpolation \R{eq:17} (left)  and
with $\tilde A$ by collocation \R{eq:18} (right) for the methods of order two
(IMEX), order four and order 6 is shown in Fig~\ref{fig:Mlambda}. 
 
\begin{figure}[th]
  \centering
  \includegraphics[width=.45\textwidth]{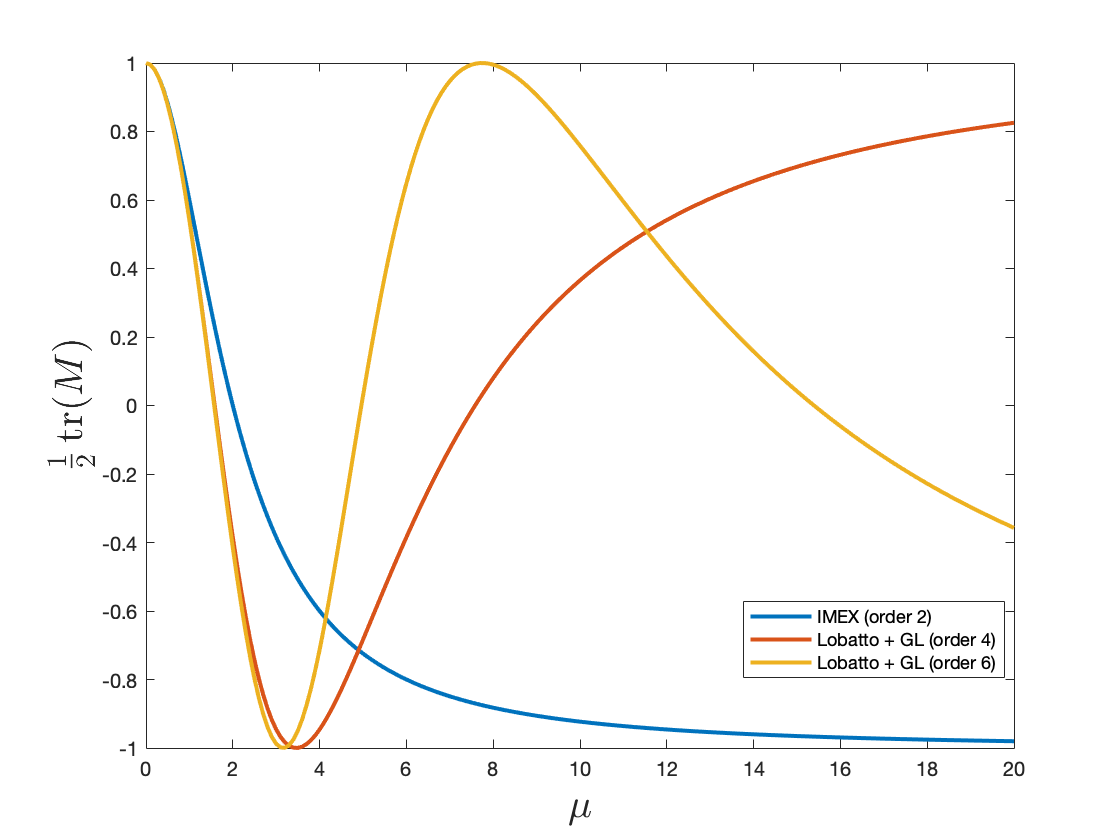}
 \includegraphics[width=.45\textwidth]{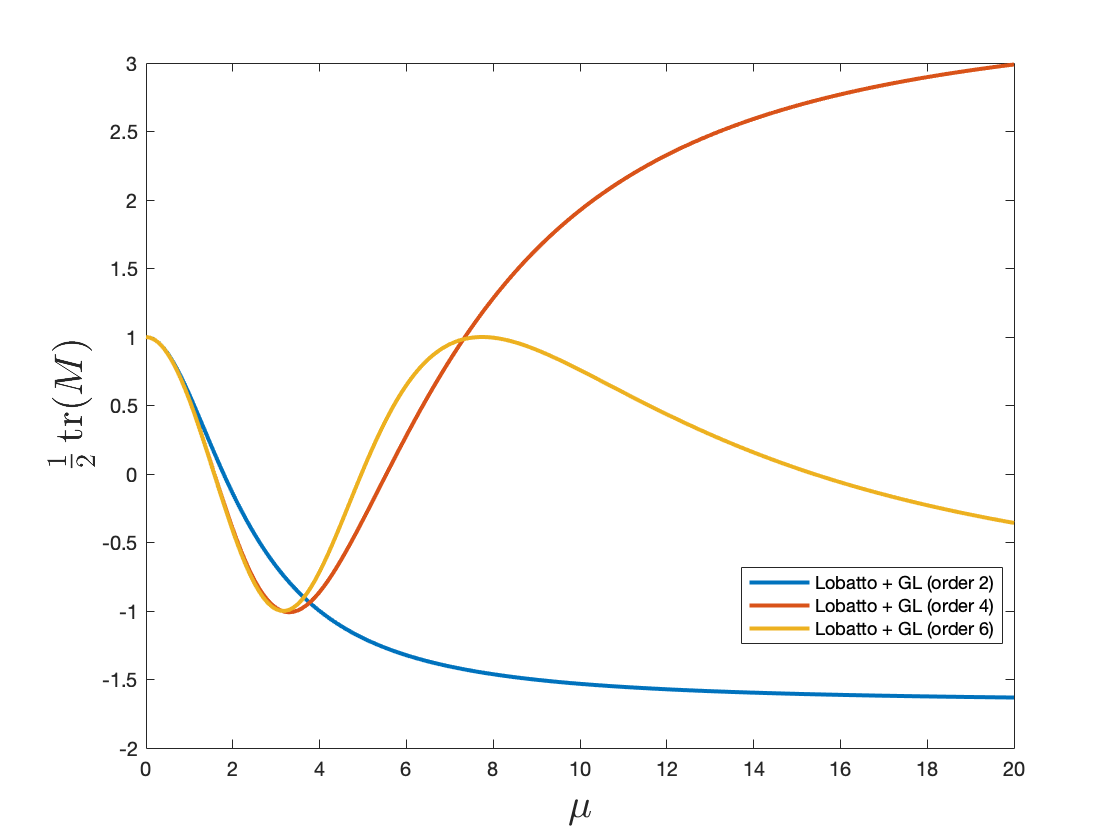}
  \caption{\emph{Left:} Plot of the stability functions for the the Lobatto IIIA-B and
Gauss--Legendre combinations of order two (IMEX), order four and order
six with coefficients constructed by interpolation \R{eq:17}. These
methods are P-stable. \emph{Right:} Stability function plot for the methods with
coefficients constructed by collocation \R{eq:18}. For 
P-stability, the function must have values between $-1$ and $1$ for
all $\mu$. These methods are not P-stable. See text for their interval
of stability.}
 \label{fig:Mlambda}
\end{figure}
\begin{figure}
  \centering
  \includegraphics[width=.5\textwidth]{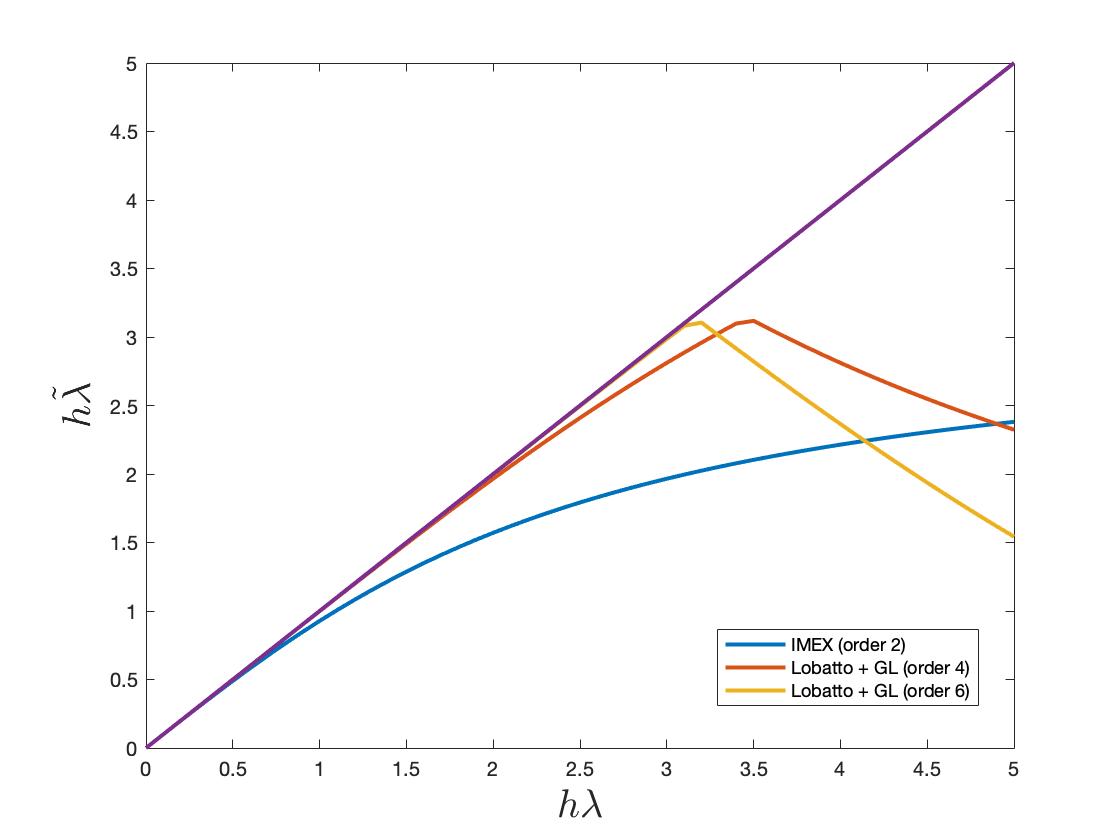}
  \caption{Modified frequency for the methods in the Lobatto IIIA and
    Gauss--Legendre family of order two (IMEX), four and six. The
    straight line is the identity function, for which $\tilde \lambda
    = \lambda$. The IMEX
  retains the correct frequency of oscillations up to $h\lambda\approx 1$. The order
  two method retains the correct frequency up to $h\lambda\approx 2$,
  while the order six method up to $h\lambda \approx 3$.}
  \label{fig:hlambda}
\end{figure}

\section{The methods as modified trigonometric integrators}
\label{sec:methods-as-modified}
We consider the application to the test equation
\begin{equation}
  \label{eq:testeq}
  \ddot q  = - \omega^2 q + f(q), \qquad F^1(q)=f(q),\quad F^2(q)=
  -\omega^2 q.
\end{equation}
\begin{theorem}[Modified trigonometric integrator]
 Consider the symplectic methods \R{eq:17} applied to the test oscillatory problem
 \R{eq:testeq}. Assume that the primary method has symmetric stages and
 that $|\frac12 \tr M(\mu)|\leq 1$, with matrix $M(\mu)$  as in
 \R{eq:Mlambda} having two independent eigenvectors in case of equality. Then the method can be considered
 as a symplectic \emph{modified trigonometric integrator} with modified frequency
 satisfying the implicit relation
 \begin{equation}
   \label{eq:24}
   \cos(\tilde \mu) = \frac12 \tr M(\mu), \qquad \tilde \mu = 
   \tilde \omega h, \mu = \omega h
 \end{equation}
 and can be written in the form
 \begin{equation}
   \label{eq:29}
   q_1- 2\cos (\tilde \mu)q_0 + q_{-1} = h^2\psi_1(\tilde \mu) (f(Q_1)+f(Q_{-1})) +
   \cdots +h^2 \psi_{s_1}(\tilde \mu) (f(Q_{s_1})+f(Q_{-{s_1}})) ,
 \end{equation}
 for $s_1$ implicitly defined filter functions
 \begin{equation}
   \label{eq:36}
   \psi_i (\tilde \mu) = b^T (I_{s_1} +\mu^2 \widehat{\tilde A} \tilde
   A)^{-1} \widehat{A}_{i}, \qquad \mu = \omega h,
 \end{equation}
 where $\widehat{A}_{i}$ is the $i$th column of $\widehat A$. The $p$-variables are reconstructed from the formula
 \begin{equation}
   \label{eq:37}
   2 \frac{\tilde \mu}{\mu} \sinc ({\tilde \mu}) p_0 = q_1-q_{-1}- h^2\psi_1(\tilde \mu) (f(Q_1)-f(Q_{-1}) )+\cdots +h^2 \psi_{s_1}(\tilde \mu) (f(Q_{{s_1}})-f(Q_{-{s_1}})),
 \end{equation}
 where the $\psi_i$ are the same as in \R{eq:36}.
\end{theorem}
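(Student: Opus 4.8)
The plan is to feed the one-step scheme \R{eq:20} (with $F^1=f$ and $F^2(q)=-\omega^2 q$, as in \R{eq:testeq}) into the equations, eliminate all internal stages, and write the update $q_1$ as an affine function of $q_0$ and $p_0$ plus a fixed linear combination of the slow forces $f(Q_i)$. The ``previous'' point $q_{-1}$ satisfies the very same formula with $h\mapsto -h$, and adding and subtracting the two formulas produces the three-term recurrence \R{eq:29} together with the momentum reconstruction \R{eq:37}.

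First I would eliminate the stages. Substituting $\tilde Q=q_0\one_{s_2}+h\tilde A P$ into $P=p_0\one_{s_1}+h\widehat A f(Q)-h\omega^2\widehat{\tilde A}\tilde Q$ and using $\widehat{\tilde A}\one_{s_2}=c$ from \R{eq:41} gives $(I_{s_1}+\mu^2\widehat{\tilde A}\tilde A)P=p_0\one_{s_1}+h\widehat A f(Q)-(\mu^2/h)\,q_0\,c$. Solving for $P$ and inserting it into $q_1=q_0+hb^TP$ yields
\[
q_1=\Bigl(1-\mu^2 b^T(I_{s_1}+\mu^2\widehat{\tilde A}\tilde A)^{-1}c\Bigr)q_0+h\,b^T(I_{s_1}+\mu^2\widehat{\tilde A}\tilde A)^{-1}\one_{s_1}\,p_0+h^2\sum_{i=1}^{s_1}\psi_i(\tilde\mu)f(Q_i),
\]
where the coefficient of $q_0$ is precisely $M_{1,1}$ in the form \R{eq:M11_c}, the coefficient of $f(Q_i)$ is $h^2\psi_i(\tilde\mu)$ with $\psi_i$ exactly as in \R{eq:36}, and — by the block-inverse that defines \R{eq:Mlambda} — the coefficient of $p_0$ equals $M_{1,2}(\mu)/\omega$, with $M_{1,2}(\mu)=\mu\,b^T(I_{s_1}+\mu^2\widehat{\tilde A}\tilde A)^{-1}\one_{s_1}$ the off-diagonal entry of $M(\mu)$. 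These manipulations are valid for every $\mu$ at which $M(\mu)$ is defined.

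Next I would run the scheme with step $-h$ from $(q_0,p_0)$: since the primary stages and the secondary nodes are symmetric this is the natural backward step, producing $q_{-1}$ along with stages $Q_{-i}$. Repeating the elimination with $h\mapsto-h$ leaves $\mu^2$ untouched, so the coefficient of $q_0$ and those of the forces are unchanged while the coefficient of $p_0$ flips sign (it carries the sole surviving factor of step length in $q_{-1}=q_0+(-h)b^TP_-$, the $p_0$-part of $P_-$ being the same as that of $P$); hence $q_{-1}=M_{1,1}q_0-\tfrac{M_{1,2}}{\omega}p_0+h^2\sum_i\psi_i(\tilde\mu)f(Q_{-i})$. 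Adding gives $q_1-2M_{1,1}q_0+q_{-1}=h^2\sum_i\psi_i(\tilde\mu)(f(Q_i)+f(Q_{-i}))$ and subtracting gives $2\tfrac{M_{1,2}}{\omega}p_0=q_1-q_{-1}-h^2\sum_i\psi_i(\tilde\mu)(f(Q_i)-f(Q_{-i}))$. Under the hypothesis $|\tfrac12\tr M(\mu)|\le 1$ (with two eigenvectors in case of equality), $\tilde\mu=\arccos(\tfrac12\tr M(\mu))$ is well defined and $M(\mu)$ is conjugate to the rotation $\Theta(\tilde\mu)$ of \R{eq:26}, so the scheme behaves as an oscillator of frequency $\tilde\omega$; by Lemma~\ref{th:M11eqM22} (whose hypotheses \R{eq:46} hold for these methods) $M_{1,1}=M_{2,2}$, hence $M_{1,1}=\tfrac12\tr M=\cos\tilde\mu$, which turns the first relation into \R{eq:29}. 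Finally, combining $\det M=1$ with $M_{1,1}=M_{2,2}=\cos\tilde\mu$ and the skew off-diagonal structure $M_{1,2}=-M_{2,1}$ of these methods yields $M_{1,2}(\mu)=\sin\tilde\mu$, so that $\tfrac{M_{1,2}}{\omega}=\tfrac{h}{\mu}\sin\tilde\mu$, which a short computation identifies with the coefficient of $p_0$ on the left of \R{eq:37}.

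The bookkeeping of which coefficients are even in $\mu$ (those of $q_0$ and of the forces) and which change sign under $h\mapsto-h$ (that of $p_0$) is routine but must be carried out carefully, since it produces exactly the signs needed in \R{eq:29}--\R{eq:37}. The genuine obstacle is the conformal structure of $M(\mu)$, i.e.\ $M_{1,1}=M_{2,2}=\cos\tilde\mu$ together with $M_{1,2}=-M_{2,1}=\sin\tilde\mu$: the first equality is Lemma~\ref{th:M11eqM22} (proved for the relevant orders, conjectured in general), while the second does \emph{not} follow from symplecticity and time-symmetry alone — the Lobatto IIIA-B pair itself, e.g.\ the Verlet scheme, has $M_{1,2}\neq -M_{2,1}$ — and so, exactly as for the stability functions of Section~\ref{sec:P-stability}, it has to be verified for the Lobatto IIIA-B/Gauss--Legendre family at hand (trivially for the IMEX, by symbolic computation at orders four and six).
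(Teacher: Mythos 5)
Your argument is correct and follows essentially the same route as the paper: eliminate the internal stages, write $q_1=M_{1,1}q_0+\omega^{-1}M_{1,2}\,p_0+h^2\sum_i\psi_i(\tilde\mu)f(Q_i)$, repeat with $h\mapsto-h$, and add/subtract using the parity in $h$ of the coefficients. (You perform the elimination by substituting $\tilde Q$ into the $P$-equation and taking a Schur complement; the paper formally inverts the $2\times2$ block matrix — the same computation, and both yield the filter functions \R{eq:36}.)

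The one place where you genuinely add something is the identification of the coefficients of $q_0$ and $p_0$ with $\cos\tilde\mu$ and $\omega^{-1}\sin\tilde\mu$. The paper compresses this into ``the $\cos(\tilde\mu)$ and $\sinc(\tilde\mu)$ terms come from $D_\omega M(\mu)D_\omega^{-1}$ in the usual way''; you correctly point out that $M_{1,1}=\cos\tilde\mu$ needs $M_{1,1}=M_{2,2}$ (Lemma~\ref{th:M11eqM22}), and that $M_{1,2}=\sin\tilde\mu$ needs, in addition, $M_{1,2}=-M_{2,1}$, which symplecticity ($\det M=1$ only gives $M_{1,2}M_{2,1}=-\sin^2\tilde\mu$) and time-symmetry do not supply. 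Your Verlet counterexample is right: there $M_{1,2}=\mu$ while $-M_{2,1}=\mu(1-\mu^2/4)$. Expanding the resolvents in series, the missing condition amounts to $b^T(\widehat{\tilde A}\tilde A)^k\one_{s_1}=\tilde b^T(\tilde A\widehat{\tilde A})^k\one_{s_2}$ for the finitely many $k$ allowed by Cayley--Hamilton — a family of identities of exactly the same flavour as \R{eq:46} (e.g.\ for $k=1$ both sides reduce to $\tilde b^T\tilde c-\tilde b^T\tilde c^2=b^Tc-b^Tc^2=\tfrac16$ via $\tilde A\one_{s_1}=\tilde c$, $\widehat{\tilde A}\one_{s_2}=c$, $b^T\widehat{\tilde A}=\tilde b^T(I-\diag{\tilde c})$ and $\tilde A^T\tilde b=B(\one-c)$), which hold for the Lobatto/Gauss--Legendre combinations but, as you say, must be verified case by case just like \R{eq:46}. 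So: same proof strategy, with your version making explicit a step the paper leaves implicit.
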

\begin{proof}
As in the proof of P-stability, we ease notation and denote by capital letters $\tilde Q$ the vector of the internal
stages $\tilde Q_i$, by $P$ the vector of the internal momenta $P_i$,
by $F(Q)$ the vector of the $f(Q_i)$ and abuse notation, to avoid the
use of tensor products. Thus,  the expression $b^T F(Q)$ means
\begin{displaymath}
  b^T F(Q) = b_1 f(Q_1) + b_2 f(Q_2) +\cdots + b_{s_1} f(Q_{s_1}).
\end{displaymath}
Similarly, for matrix products, the expression $\widehat A F(Q)$ has, as
the $i$-component, the vector $\widehat{a}_{i,1} f(Q_1) + \cdots+
\widehat{a}_{i,s_1} f(Q_{s_1})$, etc.

Proceding as for P-stability, we see that
\begin{displaymath}
  \begin{bmatrix}
    I & -h\tilde A\\
    \omega^2 h \widehat{\tilde A}& I
  \end{bmatrix}
  \begin{bmatrix}
    \tilde Q \\ P 
  \end{bmatrix} =
  \begin{bmatrix}
    q_0 \\
    p_0 + h \widehat A F(Q)
  \end{bmatrix}
\end{displaymath}
which we use to solve for the $\tilde Q$ and $P$.
From $q_1 = q_0 + h b^T P $ and $p_1 = p_0+h b^T F(Q) -h \omega^2
\tilde b^T \tilde Q$, we get
\begin{eqnarray}
  \begin{bmatrix}
    q_1\\ p_1 
  \end{bmatrix} &\!\!\!=\!\!\!&
  \begin{bmatrix}
    q_0 \\ p_0 +h b^T F(Q)
  \end{bmatrix} + h
  \begin{bmatrix}
    0 & b^T \\ -\omega^2 \tilde b^T & 0 
  \end{bmatrix}
  \begin{bmatrix}
    I & - h\tilde A \\ h\omega^2 \widehat{\tilde A}& I
  \end{bmatrix}^{-1}
  \begin{bmatrix}
    q_0 \\ p_0 + h \widehat A F(Q)
  \end{bmatrix}
   \label{eq:33}\\
  &\!\!\!=\!\!\!& D_\omega M(\mu) D_\omega^{-1}\!\!
      \begin{bmatrix}
        q_0\\ p_0
      \end{bmatrix}
  +h \!\!
  \begin{bmatrix}
    0 \\ b^T F(Q) 
  \end{bmatrix} + h^2\!\!
  \begin{bmatrix}
    0 & b^T \\ -\omega^2 \tilde b^T & 0 
  \end{bmatrix}\!\!
  \begin{bmatrix}
    I & - h\tilde A \\ h\omega^2 \widehat{\tilde A}& I
  \end{bmatrix}^{-1} \!\!\!
  \begin{bmatrix}
    0 \\  \widehat A F(Q)
  \end{bmatrix}.
   \nonumber
\end{eqnarray}
Let $ \begin{bmatrix}
    X_1 & X_2 \\ X_3 & X_4
  \end{bmatrix}=\begin{bmatrix}
    I & - h\tilde A \\ h\omega^2 \widehat{\tilde A}& I
  \end{bmatrix}^{-1} 
  $. One has
  \begin{eqnarray*}
    X_1 &=& (I_{s_2}+\mu^2 \tilde A \widehat{\tilde A})^{-1} \\
    X_2&=& h \tilde A (I_{s_1} + \mu^2 \widehat{\tilde A}\tilde A)^{-1}\\
    X_3 &=& -h \mu \widehat{\tilde A} (I_{s_2}+\mu^2 \tilde A
            \widehat{\tilde A})^{-1} \\
    X_4 &=& (I_{s_1} + \mu^2 \widehat{\tilde A}\tilde A)^{-1}.
  \end{eqnarray*}
Thus $q_1$ is given by
\begin{displaymath}
  q_1 = \cos(\tilde \mu) q_0 + h\frac{\tilde \mu}{\mu} \sinc (\tilde
  \mu) p_0 + h^2 b^T (I_{s_1}+ \mu^2\widehat{\tilde A} \tilde A  )^{-1}
  \widehat A F(Q_+), 
\end{displaymath}
where, as above, $\mu = \omega h$ and $\tilde \mu=\tilde \omega h$ is the modified
frequency and $F(Q_+)$ indicates that the internal stages are in
$[0,h]$.
The $\cos(\tilde \mu)$ and $\sinc(\tilde \mu)$ terms come form
$D_\omega M(\mu) D_{\omega^{-1}}$ in the usual way, provided that
$|\frac12 M(\mu)|\leq 1$.
By replacing $h$ with $-h$, we have
\begin{displaymath}
   q_{-1} = \cos(\tilde \mu) q_0 - h\frac{\tilde \mu}{\mu} \sinc (\tilde
  \mu) p_0 + h^2 b^T (I_{s_1}+ \mu^2\widehat{\tilde A} \tilde A  )^{-1}
  \widehat{A}  F(Q_-), 
\end{displaymath}
where, as above, $F(Q_-)$ indicates that indicates that the internal stages are in $[0,-h]$
Taking the sum of $q_{1}$ and $q_{-1}$, we obtain
\begin{displaymath}
  q_1-2\cos(\tilde \mu) q_0 + q_{-1} =  h^2 b^T (I_{s_1}+ \mu^2\widehat{\tilde A} \tilde A  )^{-1}
  \widehat{A} (F(Q_+)+  F(Q_-)),
\end{displaymath}
while subtracting the two expressions, we obtain
\begin{displaymath}
  2 h\frac{\tilde \mu}{\mu} \sinc (\tilde
  \mu) p_0 = q_1-q_{-1} - h^2  b^T (I_{s_1}+ \mu^2\widehat{\tilde A} \tilde A  )^{-1}
  \widehat A (F(Q_+)-F(Q_-)).
\end{displaymath}
 With some simple algebraic manipulations, it is easy to recover the
 filter functions. The theorem statement follows by assuming that the primary
method has symmetric stages.
\end{proof}

\textbf{Remark.} The above theorem is also valid for all the methods
described in the paper in the region where the step size $h$ is such
that $|\frac12 \tr M|\leq 1$.

\vspace{10pt}
When the first node $c_1=0$ then $Q_1=Q_{-1} = q_0$ so the first term on the right hand side of  \R{eq:29} becomes $2\psi_1(\tilde
\mu) f(q_0)$ while it cancels in \R{eq:37}. Moreover, in the case of the
Lobatto primary method, $c_{s_1} =1$ hence $Q_{s_1}= q_1$ and $Q_{-s_1}=
q_{-1}$. However,  the last column of the matrix $\widehat A$ is
zero, and so is the last filter function $\psi_{s_1}$.

For the IMEX method, we have $c_1=0, c_2=1$ ($s_1=2$), hence \R{eq:37} gives
\begin{displaymath}
  2 h \frac{\tilde \mu} {\mu} \sinc (\tilde \mu) p_0 = q_1-q_{-1}.
\end{displaymath}
We have $\psi_2=0$ and
\begin{displaymath}
  q_1 - 2 \cos(\tilde \mu)q_0 + q_1 =  h^2 2 \psi_1(\tilde \mu) f(q_0)
  = h^2\left(1+\frac{\mu^2}4\right)^{-1} f(q_0)
\end{displaymath}
and we recover its expression as a modified trigonometric
integrator 
\begin{displaymath}
  q_1 - 2 \cos(\tilde \mu)q_0 + q_1 =  h^2 \psi(\tilde \mu)
  f(\phi(\tilde \mu)q_0)
\end{displaymath}
with filter functions $\phi=1$, $\psi (\xi) = \cos \xi$ satisfying the
implicit relation $\cos
(\tilde \mu) = (1+\frac{\mu^2}4)^{-1}$, as derived in \cite{mclachlan14mti}.

Similarly, for the order four Lobatto--Gauss-Legendre method,
we have
\begin{eqnarray*}
  2 h \frac{\tilde \mu} {\mu} \sinc (\tilde \mu) p_0 &=& q_1-q_{-1} -
   h^2 \psi_2(\tilde \mu) (f(q_{\frac12}) - (f(q_{-\frac12})) 
\end{eqnarray*}
and
\begin{equation}
  \label{eq:LGL4modint}
  q_1 - 2 \cos(\tilde \mu) q_0 + q_{-1} = h^2 2 \psi_1(\tilde \mu)
  f(q_0) + h^2 \psi_2(\tilde \mu) (f(q_{\frac12}) + f(q_{-\frac12})),
\end{equation}
with filter functions $\psi_{i}$, $i=1,2,3$, satisfying the
implicit relations
\begin{equation}
  \label{eq:LGL4filter}
  \psi_1(\tilde \mu) = \frac{2(-\mu^2+12)}{\mu^4 + 12 \mu^2 + 144},
  \qquad
  \psi_2(\tilde \mu) = \frac{2(\mu^2+24)}{\mu^4 + 12 \mu^2 + 144},
  \qquad
  \psi_3=0
\end{equation}
 ($\phi_i=1$, $i=1,2,3$). The modified frequency is given by \R{eq:35}.

Finally, for the of order 6 Lobatto--Gauss-Legendre method, we have
similar expressions, with filters implicitly defined by
\begin{equation}
  \label{eq:LGL6filter}
  \begin{meqn}
    \psi_1(\tilde \mu) &=&\frac{2\mu^4-140\mu^2+1200}{\mu^6+24\mu^4+720\mu^2+14400}, \\
  \psi_2(\tilde \mu) &=&
  \frac{-(\mu^4+50\mu^2-600)\sqrt5-50\mu^2+3000}{\mu^6
    +24\mu^4+720\mu^2+14400},\\
  \psi_3(\tilde \mu) &=&\frac{(\mu^4+50\mu^2-600)\sqrt5-50\mu^2+3000}{\mu^6
    +24\mu^4+720\mu^2+14400},\\
  \psi_4 (\tilde \mu) &=& 0,
  \end{meqn}
\end{equation}
and modified frequency given by \R{eq:50}.

\section{Numerical experiments}
\label{sec:numer-exper}
As a bed test, we consider the Fermi-Pasta-Ulam-Tsingou (FPUT,
formerly FPU) problem of alternating soft and
stiff springs, that has been extensively used in literature to study
methods for oscillatory problems.
Because of the oscillatory nature of the problem, among all the
methods proposed, we test only those that are 
P-stable, as methods that are not P-stable are likely to produce diverging solution
as soon as the step size leaves the region of P-stability.
Therefore, in what follows, all the numerical experiments are performed with
the Lobatto--Gauss-Legendre family \R{eq:20} with coefficients by
interpolation \R{eq:18}.
We will compare these methods also with higher order integrators
obtained using the IMEX (wich is the Lobatto--Gauss-Legendre method of
order 2) and the Yoshida time stepping technique.

\subsection{The Fermi-Pasta-Ulam-Tsingou problem}
For comparison with \cite{hairer06gni,mclachlan14mti},
we consider the same setup with  $2\ell$ points of unit mass
representing alternating soft nonlinear springs and stiff linear springs. 
Setting $q$ to be the concatenation of slow (index $s$) and fast
(index $f$) position variables,
\begin{displaymath}
  q = [q_{s,1}, \ldots, q_{s,\ell}, q_{f,1}, \ldots, q_{f,\ell}]^T,
\end{displaymath}
and $p$ the corresponding momenta, the Hamiltonian reads
\begin{displaymath}
  \begin{split}
    H(q,p) &= \frac12 \sum_{i=1}^\ell (p_{s,i}^2 + p_{f,i}^2)+
  \frac{\omega^2}2 \sum_{i=1}^\ell q_{f,i}^2 \\
  & \qquad + \frac14 \left[
    (q_{s,1}-q_{f,1})^4 + \sum_{i=1}^{\ell-1} (q_{s,i+1} - q_{f, i+1} -
    q_{s,i}- q_{f,i})^4 + (q_{s, \ell}+ q_{f, \ell})^4 \right].
  \end{split}
\end{displaymath}
In our setup, the nonlinear potential and kinetic energy are treated
with the Lobatto IIIA-B pair, while the linear stiff energy
$\frac{\omega^2}2 \sum_{i=1}^\ell q_{f,i}^2 $ is treated with the
Gauss--Legendre methods based on interpolation.

The total oscillatory energy $I$,
 \begin{displaymath}
   I(q_f,p_f) = \frac12 \sum_{i=1}^\ell p_{f,i}^2 + \frac{\omega^2}2
   \sum_{i=1}^\ell q_{f,i}^2 = I_1+\ldots + I_\ell
 \end{displaymath}
is defined as the sum of the oscillatory energies of each fast spring.
For ease of comparison with the numerical examples in literature, the initial conditions used in the simulations are the same as those
in \cite{hairer06gni,mclachlan14mti}
\begin{displaymath}
  q_{s,1}(0) = 1, \quad p_{s,1}(0) =1, \quad q_{f,1}(0) =
  \omega^{-1}, \quad p_{f,1}(0) =1,
\end{displaymath}
and all the other initial values equal to zero.
In the numerical experiments, we use $\ell=3$.

The left plot in figure \R{fig:OsciHerr} shows the oscillatory energies for each spring
and the total oscillatory energy, comparing the IMEX method (which is
the lowest method in the class) and the higher order proposed method
based interpolation (Lobatto--Gauss-Legendre of order 4 and 6).
The right plot shows the corresponding error in the Hamiltonian energy.
\begin{figure}[t]
  \centering
  \includegraphics[width=.47\textwidth]{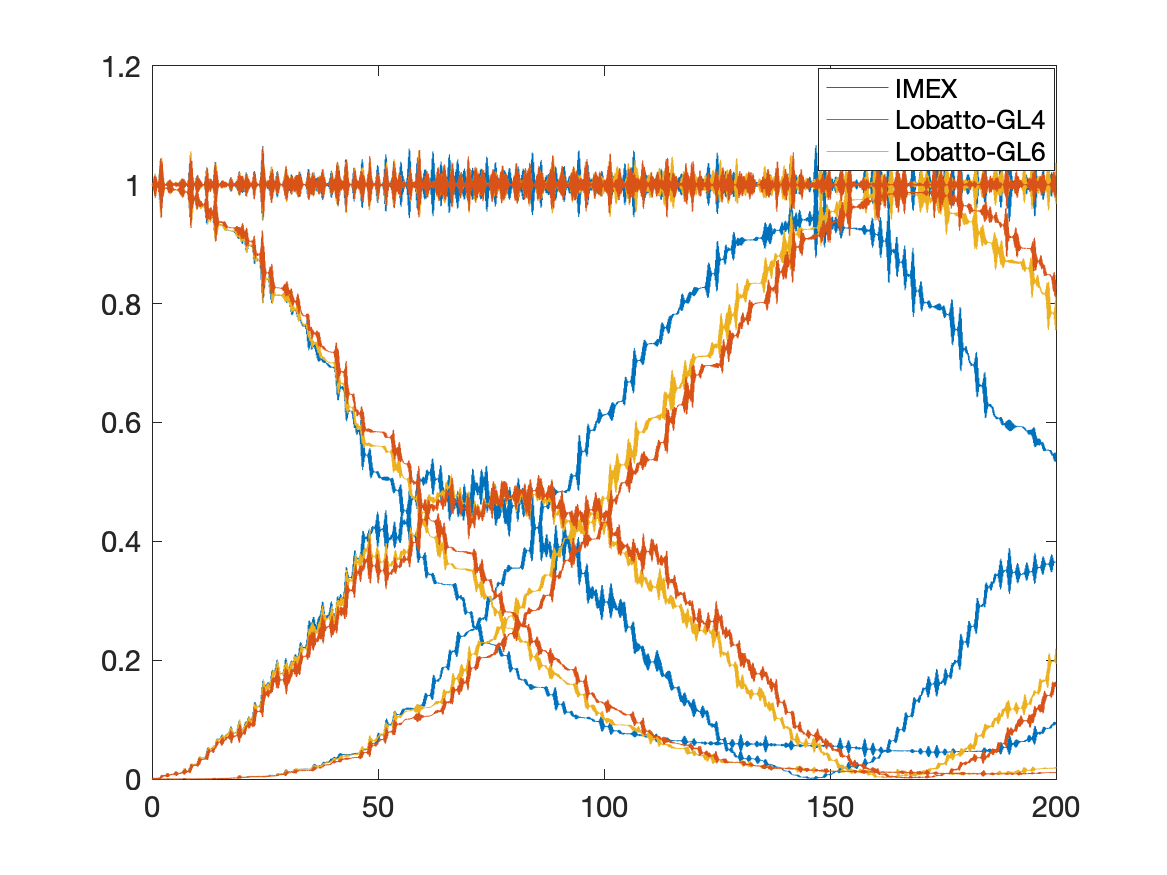}
    \includegraphics[width=.47\textwidth]{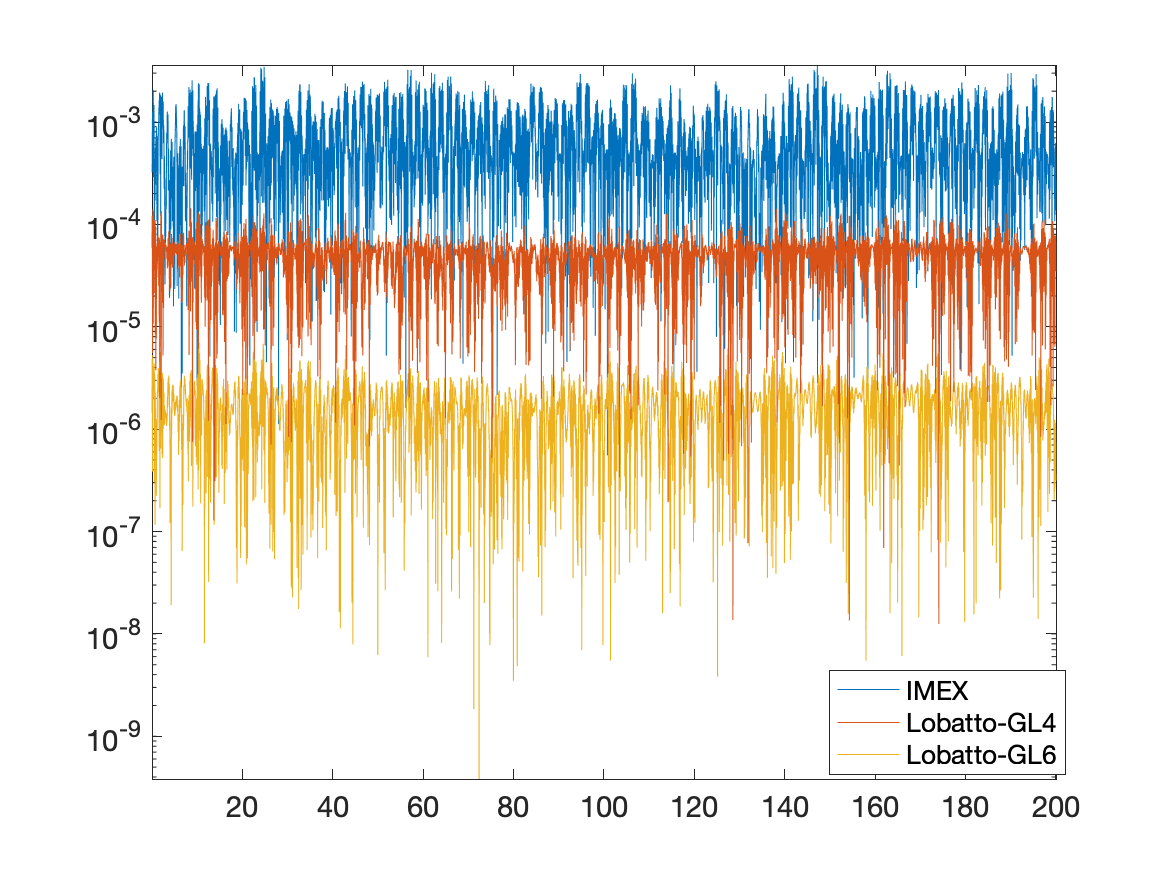}
  \caption{\emph{Left:} Individual oscillatory energies $I_i$ and total oscillatory
    energy $I=\sum_i I_i$. \emph{Right:} Energy error $|H-H_0|$. 
     The simulations are performed in $[0, 200]$ with $\omega = 50$
     and $h=2/\omega=0.04$. See text for initial conditions.}
  \label{fig:OsciHerr}
\end{figure}



When the modified frequency $\tilde \omega$ is such that
$\cos(h\tilde\omega)=\pm1$, see Figure~1, left plot, we expect to
observe resonances.
This happens for $h\omega/\pi =2\sqrt3/\pi\approx
1.1$ for order four method and for $h\omega/\pi = \sqrt{10}/\pi \approx 1$ and
$h\omega/\pi = 2\sqrt{15}/\pi \approx 2.47$ for the order six method.
Resonances can be observed in the preservation of the Hamiltonian
(total) energy of the system and in the scaled total oscillatory
energy $\omega I$ in the range $(0, 4.5\pi]$, the latter being more
uniform in dealing with the frequencies.
It is clear that the width of the resonance region is inversely propotional to
the curvature at the resonance point. The flatter the stability
function is at the resonance points in Figure~ref{fig:Mlambda}, the wider the region of
resonance.

\begin{figure}[t]
  \centering
  \includegraphics[width=.45\textwidth]{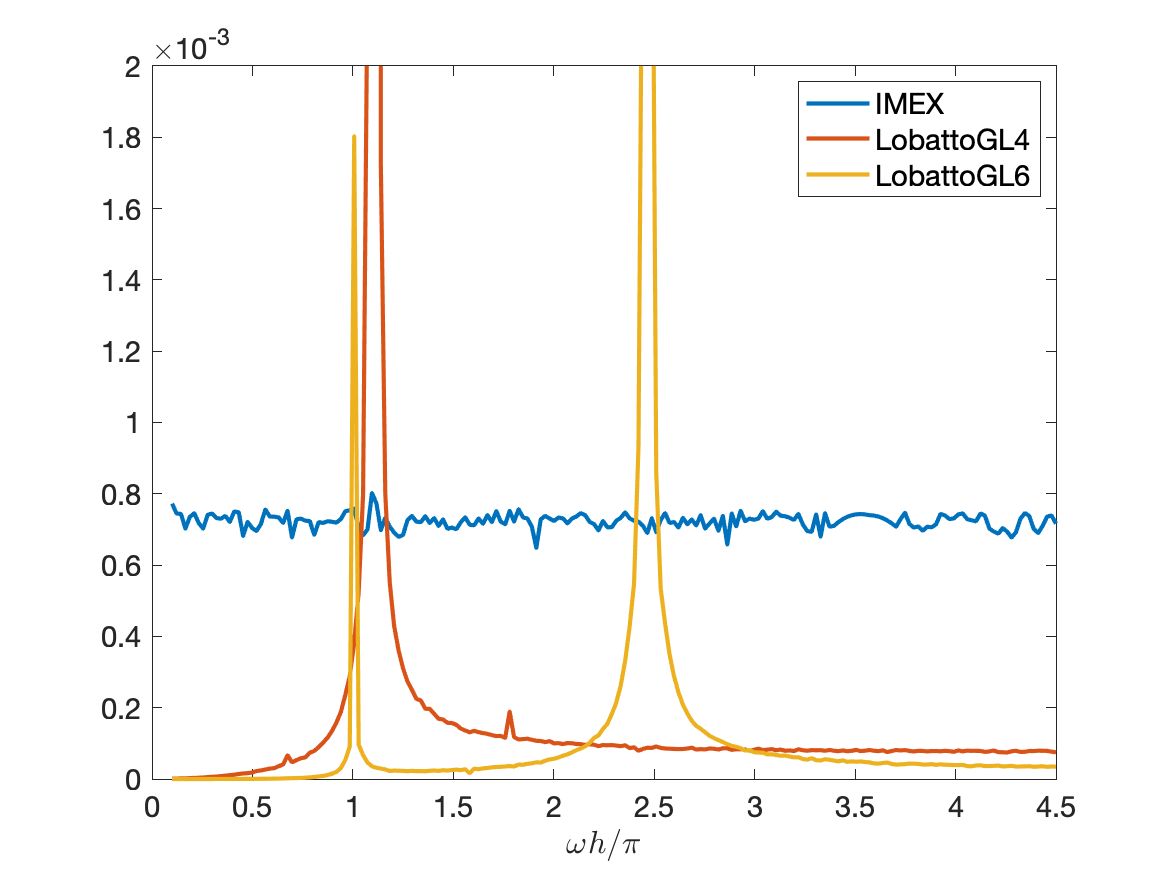}
  \includegraphics[width=.45\textwidth]{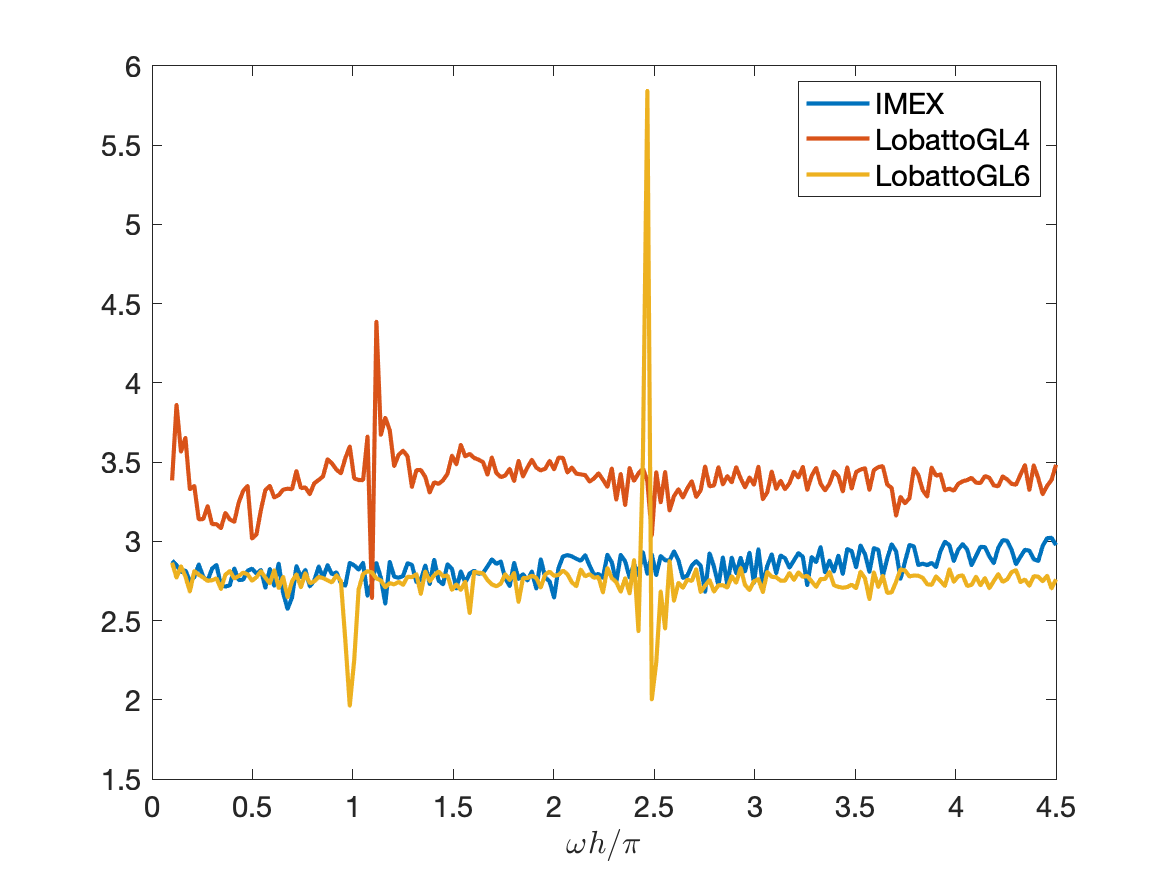}
  \caption{\emph{Left:} Maximum deviation in the Hamiltonian (total)
    energy error. \emph{Right:} Maximum deviation in scaled
    oscillatory energy $\omega I$ error. The computation is performed
    in [0,100] for $h\omega/\pi = 0,...,4.5$,  $h = 0.02$. The peaks
    correspond to the resonances of the methods. These occur when
    $\cos(h\tilde \omega)=\pm1$, namely when $h\omega/\pi \approx
    1.1$ for the order four methods and when $h\omega/\pi \approx 1,
    2.47$ for the order six method. See text for details. }
  \label{fig:scaledI}
\end{figure}

The left plot in Figure~\ref{fig:longtime} displays the solution obtained by the
methods by taking a relatively large step size, with
$h\omega/\pi\approx 1.59$. The approximations to the solutions are
still fairly acceptable and the methods do not display excessive
oscillations as other trigonometric integrators do.
 
The right plot in Figure~\ref{fig:longtime} depicts the behavior of the 
methods as they approach their high-frequency limit, in a similar
experiment as  in \cite{mclachlan14mti}.
We keep $h = 0.1$ but take $\omega = 1000$ with a ratio $h\omega/\pi
\approx 31.8$. In this experiment $\omega$ is scaled by a factor of $20$ (compared to
200 in \cite{mclachlan14mti}) and the time interval must also be
scaled correspondingly to $[0, 4000]$.  
\begin{figure}
  \centering
  \includegraphics[width=.45\textwidth]{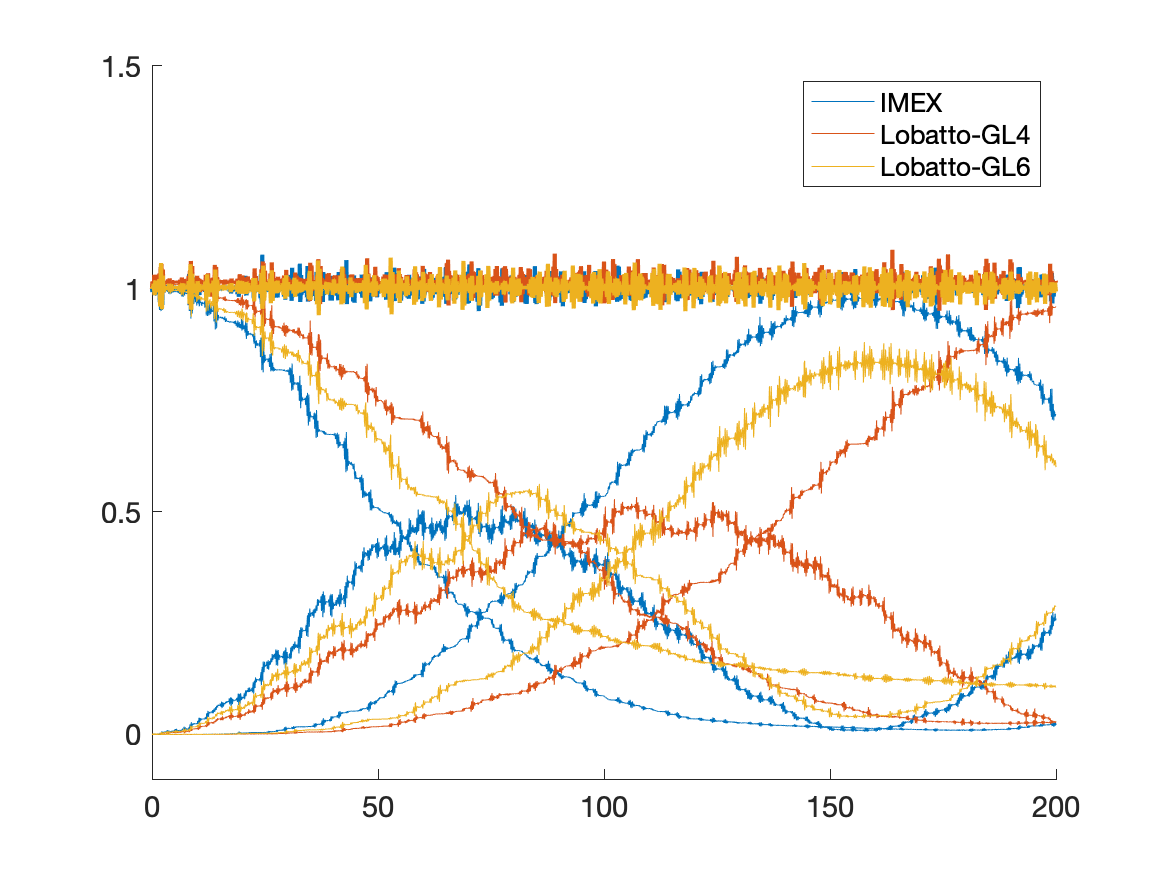}
   \includegraphics[width=.45\textwidth]{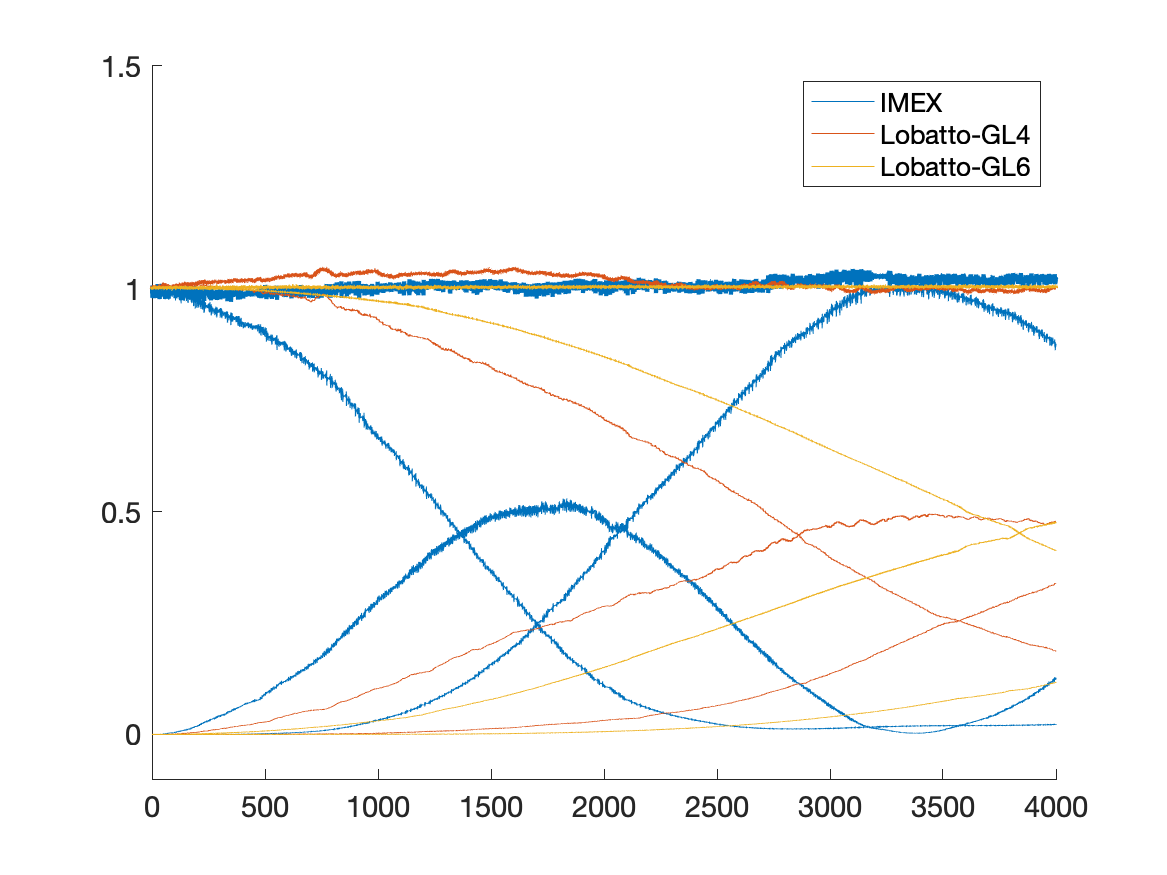}
   \caption{\emph{Left:} Individual oscillatory energies $I_i$ and total oscillatory
     energy $I=\sum_i I_i$, as in Figure~\ref{fig:OsciHerr}, with step
     size  $h=0.1$, $\omega=50$.
     \emph{Right:} Same oscillatory energies as in the left plot. Here the
     step size is kept fixed to $h=0.1$ but the frequency $\omega$ as well as the
     length of the interval is scaled by a factor of $20$.}
   \label{fig:longtime}
 \end{figure}
A rough analysis of the slow energy exchange can be performed  by using the
modified trigonometric integrator form of the method and the expansion
of the exact and numerical solution using modulate Fourier expansions,
see \cite{hairer06gni,mclachlan14mti}. One difficulty
with respect to the standard analysis using modified trigonometric
integrators is the presence of more filter functions $\psi$ in
\R{eq:29} and of the internal stages of the methods. However, performing a Taylor
expansion of the internal stages, one can put the methods in the form
\begin{displaymath}
   q_1- 2\cos (\tilde \mu)q_0 + q_{-1} = h^2\psi(\tilde \mu)
   f(\phi(\tilde \mu) q_0) + \mathcal{O}(h^4)
 \end{displaymath}
 and apply the standard analysis as for trigonometric integrators.
 
 For instance, for the Lobatto--Gauss-Legendre method of order 4, one
 has that $\phi=1$, $\psi(\tilde \mu)=2(\psi_1(\tilde \mu) +\psi_2(\tilde
 \mu))=1/(1+\frac1{12}\mu^2 + \frac1{144} \mu^4)$,
 with $\mu = h\omega$ (the $\psi$-functions are defined
 implicitly).

Using the same setup as in \cite{hairer06gni,mclachlan14mti}, one
finds that $\alpha=1+\frac16\mu^2 +\mathcal{O}(\mu^4)$,
 $\beta=1$, and $\gamma = 1-\frac1{1728}\mu^6 +\mathcal{O}(\mu^8)$. In order to preserve the slow energy exchange
 at a correct rate, it is required that $\alpha=\beta=\gamma=1$, a
 property that is satisfied only by the IMEX, as proven in
 \cite{mclachlan14mti}.
 It is in particular the value of $\alpha$ that has the strongest
 effect on the slow energy exchange. Nevertheless, the methods perform way better
 than classical trigonometric integrators.

\begin{figure}
  \centering
  \includegraphics[width=.6\textwidth]{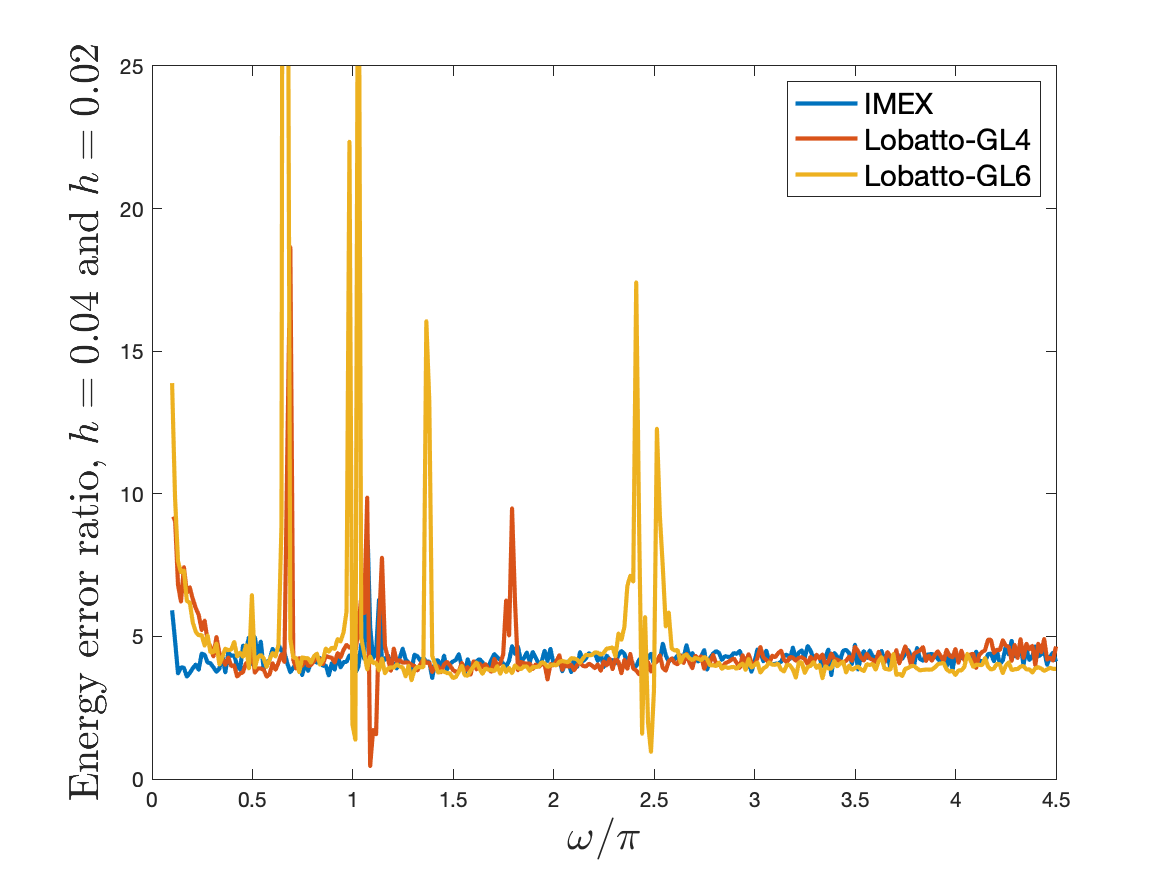}
  \caption{Hamiltonian max error ratio for $h=0.04$ and $h=0.02$,
    $\omega=50$, $T=200$. We observe peaks
    in correspondence of the resonances. }
  \label{fig:Energyerrorratio}
\end{figure}
Figure~\ref{fig:Energyerrorratio} shows the Hamiltonian
maximum error ratio computed for $h=0.04$ and $h=0.02$ for various
values of $\omega$ up to $4.5 \pi$. In the convergence region we would
expect that the ratio would be $16$ for the method of order $4$ and
$64$ for the method of order $6$, however the plot does not cover well
the convergence region. Overall, we see that the methods have a
conservation of $\mathcal{O}(h^2)$, except from the regions
corresponding to resonances. This behaviour seems to indicate that the
methods suffer of order reduction, a phenomenon that is not uncommon
for higher order methods in prescribed regions of the step-size. This
effect will be discussed more thoroughly below.

\subsection{Order reduction}
\label{sec:order-reduction}
Ultimately, it is the error in the slow variables one of the most
relevant quantities in the numerical simulations of these kind of
problems, because the fast variables will be in any case poorly resolved.
In figures (\ref{fig:FPUslowLGL4}--\ref{fig:FPUslowLGL6}) we show
the errors in the slow variables for the FPUT problem for different
values of the step-size and different $\omega$.
The errors are evaluated at $T=3$ and the exact solution is computed 
using Matlab's \texttt{ode45} to about machine precision
(setting \texttt{AbsTol, RelTol = 1e-14}). 
It is observed that the methods suffer of order reduction both in the positions and the
momenta, manifested as a platou in the error plots.

  \begin{figure}[t]
  \centering
  \includegraphics[width=.45\textwidth]{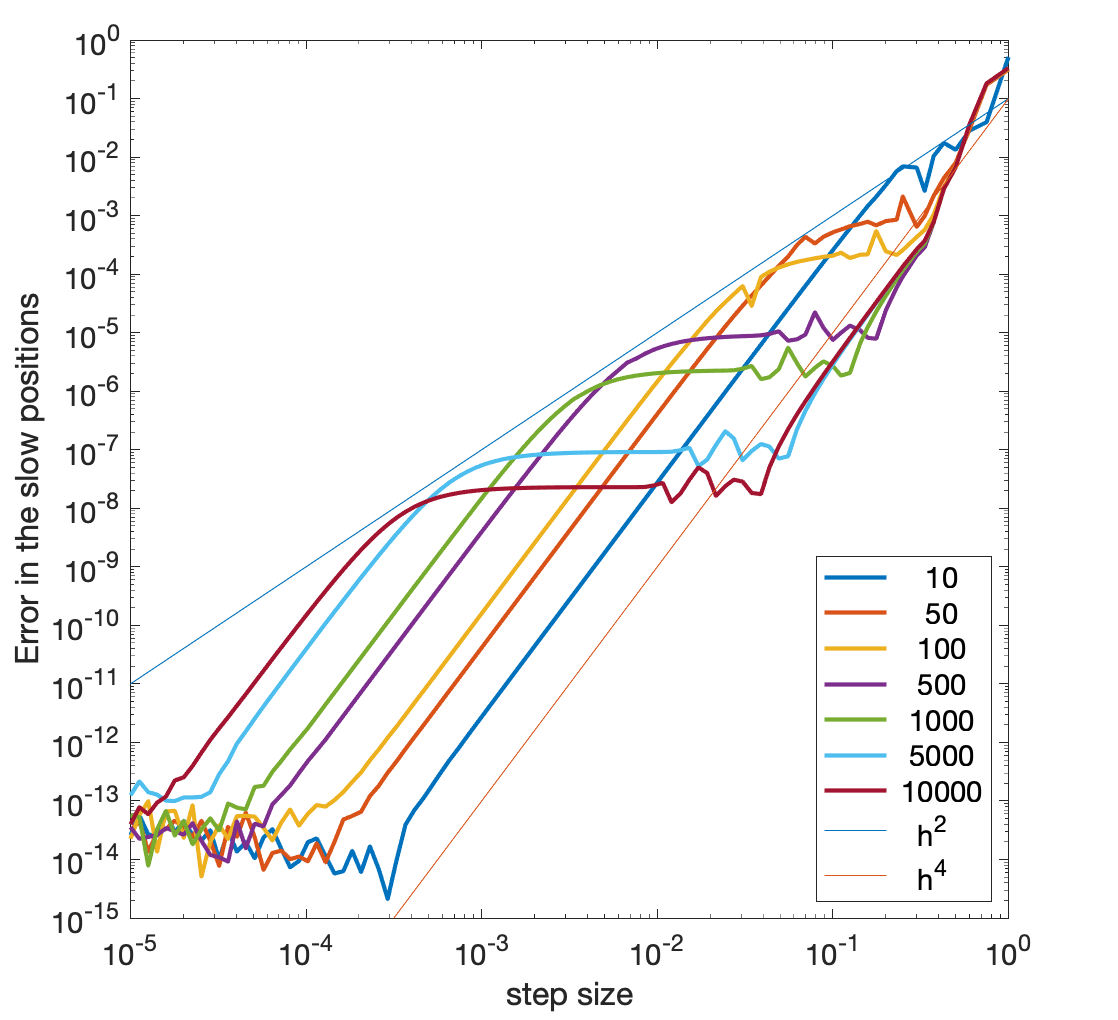}
    \includegraphics[width=.47\textwidth]{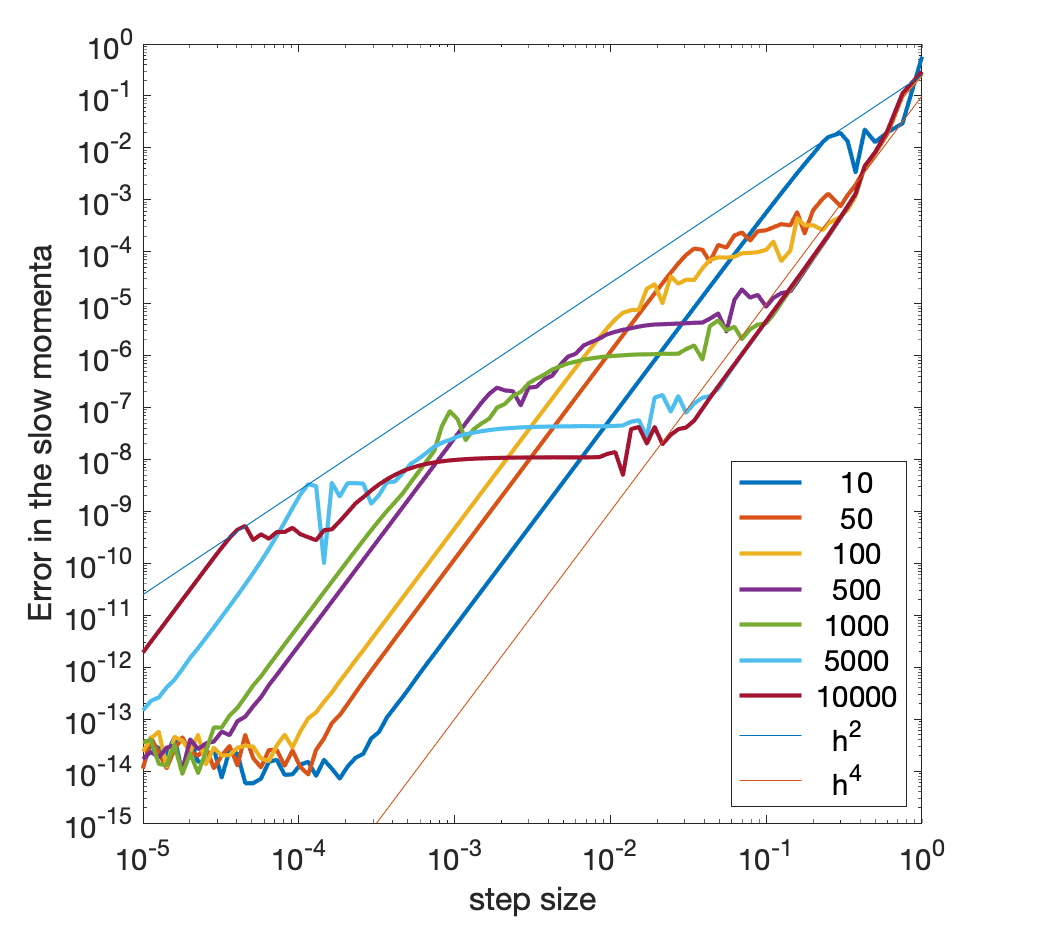}
  \caption{Errors at $T=3$ in the slow positions (left) and slow
    momenta (right) for the
    Lobatto-Gauss method of order 4 against the step size $h$ for
    $\omega = 10,\ldots, 10^4$. The lines for $h^2$ and $h^4$ are
    plotted for convenience.}
      \label{fig:FPUslowLGL4}
\end{figure}

\begin{figure}[t]
  \centering
  \includegraphics[width=.48\textwidth]{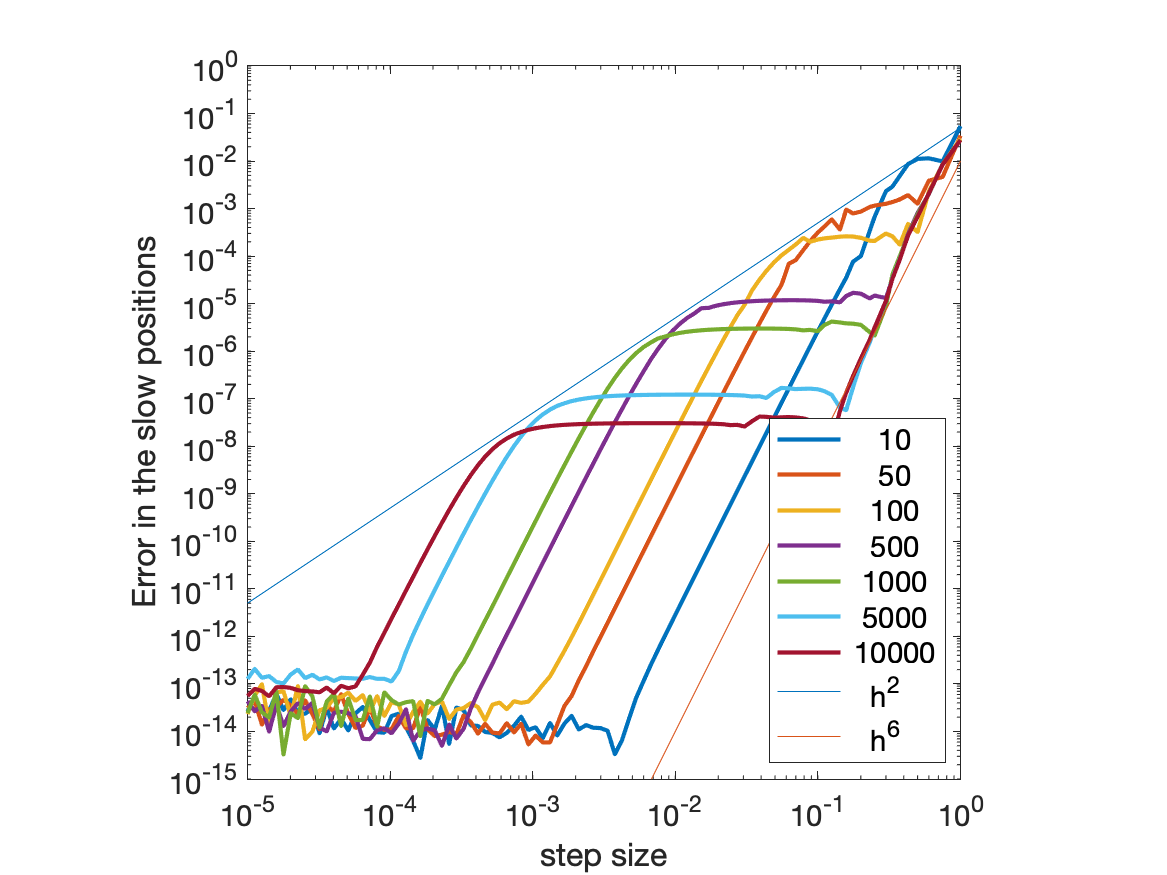}
    \includegraphics[width=.48\textwidth]{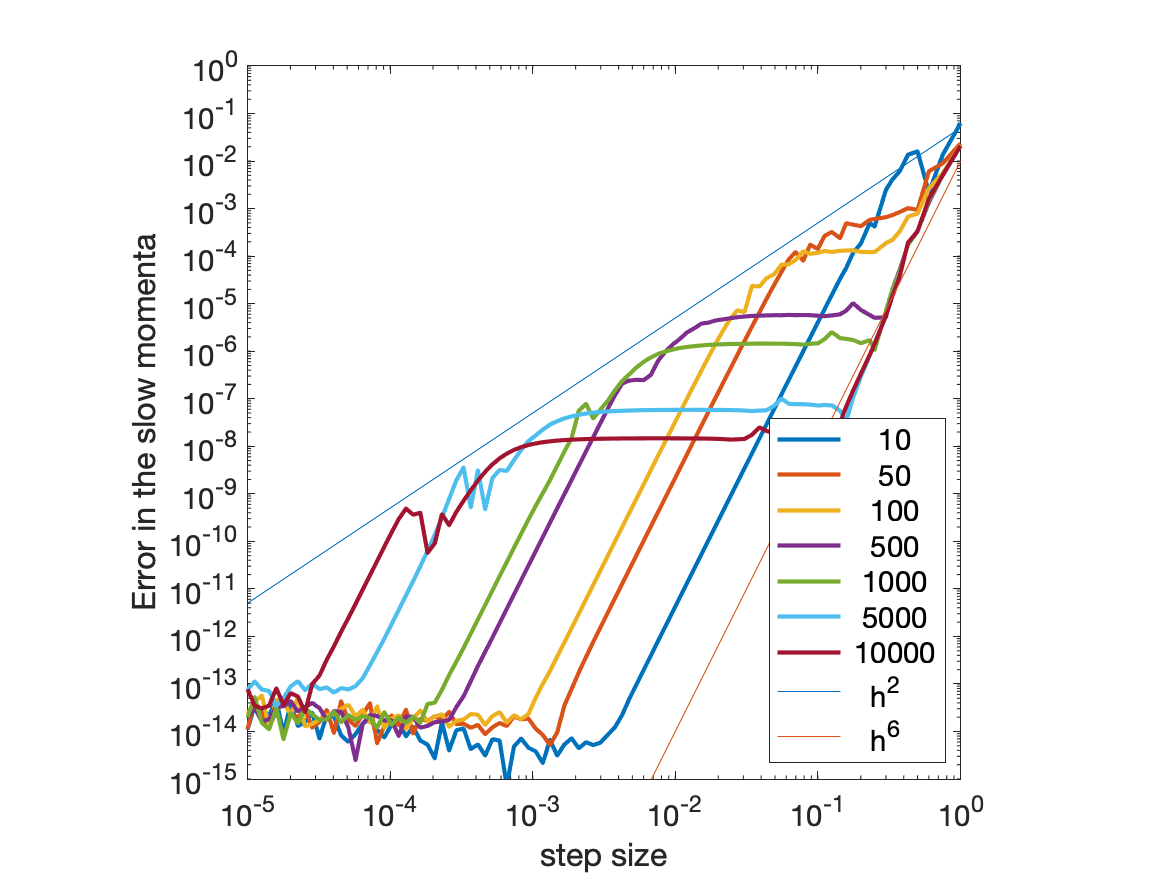}
  \caption{Error in the slow positions (left) and slow momenta (right)
    for the
    Lobatto-Gauss method of order 6.}
     \label{fig:FPUslowLGL6}
\end{figure}

In figures (\ref{fig:FPUslowIMEX4})-(\ref{fig:FPUslowIMEX6}) we 
repeat the same experiments by methods of order 4 and 6 obtained from
the IMEX and using the Yoshida technique \cite{yoshida90coh}. Also in this case one can
observe an order reduction, from order 4 to order 3 for the positions
and from order 4 to 2 for the momenta for the method of order 4. Similarly, one observes a reduction from order 6 to order 3 for
the positions and from order 6 to order 2 for the momenta for the
method of order 6. In summary, the order reduction is similar to that
of the Lobatto--Gauss-Legendre on the momenta, but is one order less
on the positions.

It is not clear why the Yoshida technique gives a lesser order
reduction for the positions and marginally also for the momenta. We
conjecture that it might be due to the fact that the method uses
step sizes $\alpha h$ and $\beta h$, rather than just $h$, and the use
of these two step sizes might reduce the resonance effects of the
single step size.
\begin{figure}[t]
  \centering
  \includegraphics[width=.48\textwidth]{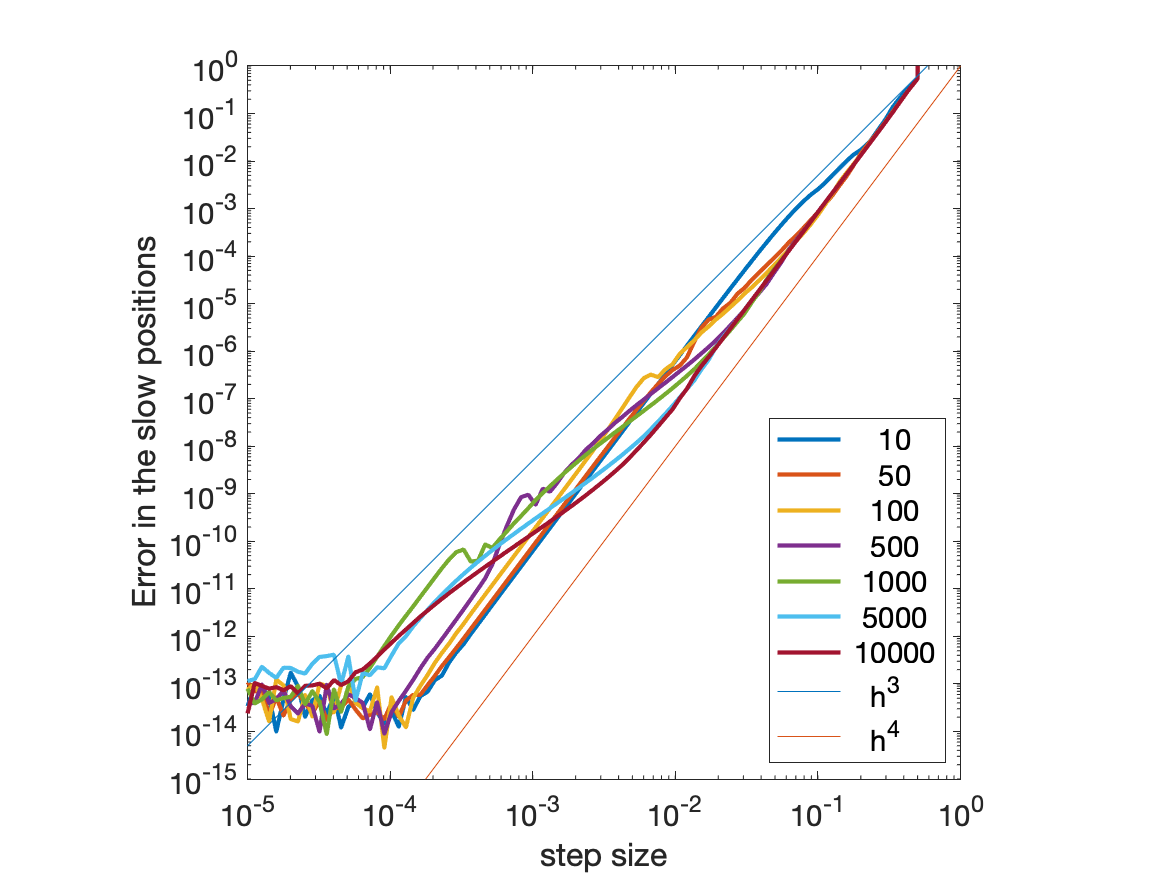}
    \includegraphics[width=.48\textwidth]{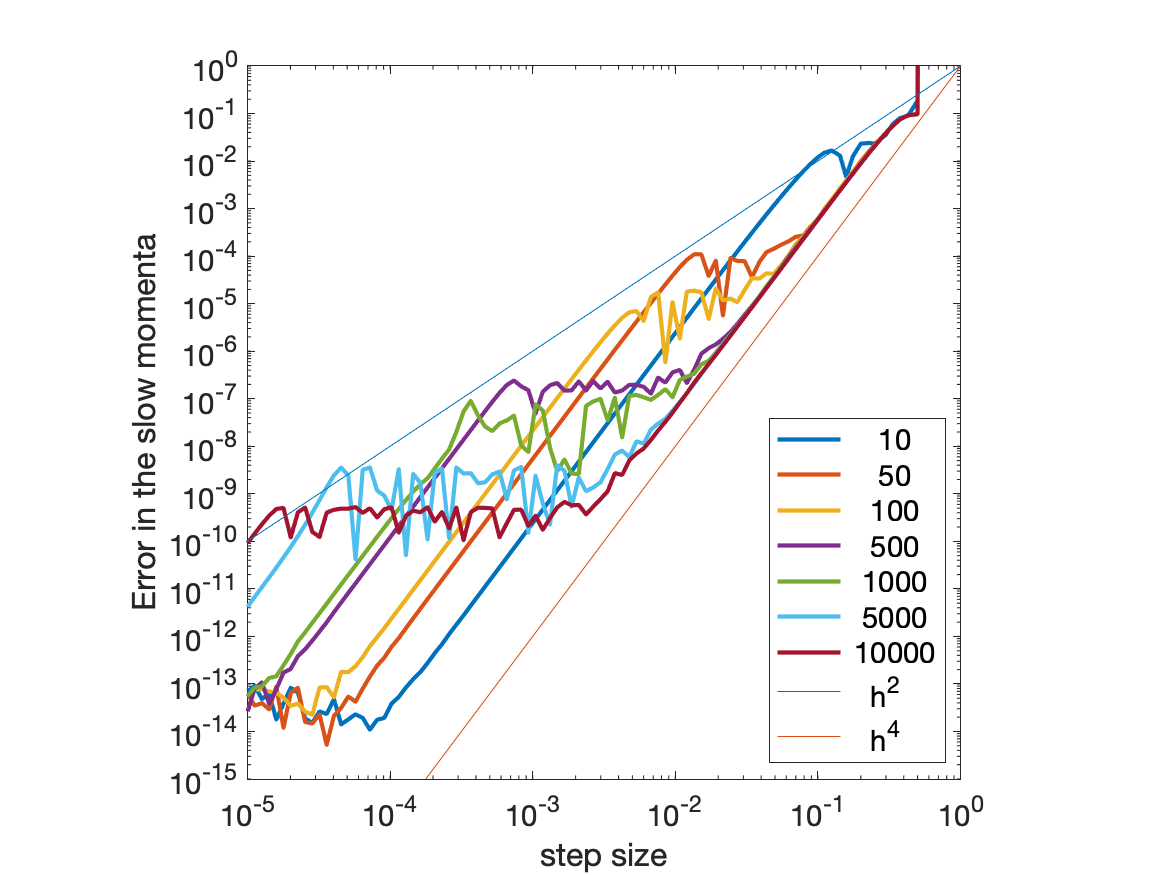}
  \caption{Error in the slow positions (left) and slow momenta (right)
    for the
    IMEX method with a Yoshida time stepping  for a method of order
    4.
    There is a order two reduction in the momenta, but only an order
    one reduction for the error in the slow positions.}
     \label{fig:FPUslowIMEX4}
\end{figure}

\begin{figure}[t]
  \centering
  \includegraphics[width=.48\textwidth]{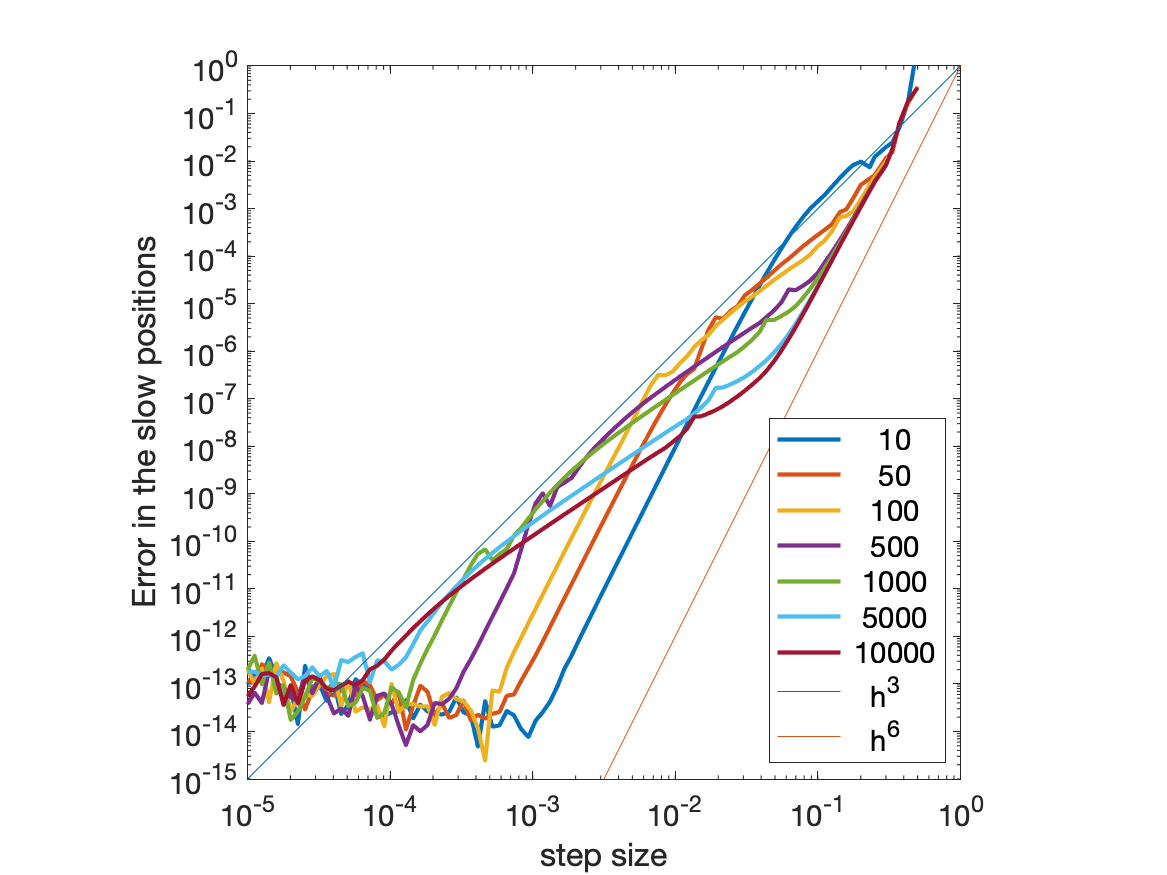}
    \includegraphics[width=.48\textwidth]{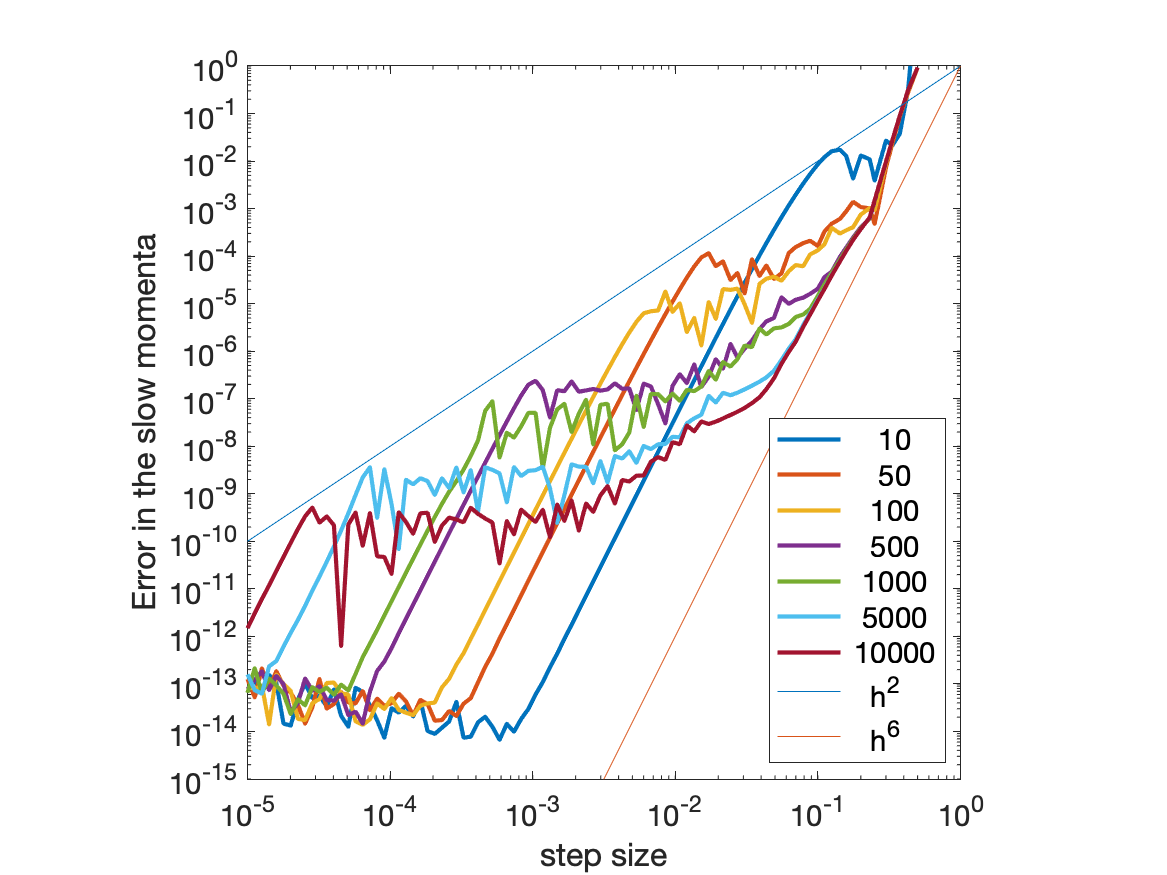}
  \caption{Error in the slow positions (left) and slow momenta (right) for the
    IMEX method with a Yoshida time stepping yielding a method of
    order 6. Also in this case there is an observable reduction in the
    order. We have four orders loss for the momenta and three order
    loss in the positions.}
    \label{fig:FPUslowIMEX6}
\end{figure}

Figures~(\ref{fig:FPUslowcomparison4})-(\ref{fig:FPUslowcomparison6})
show a comparison of the errors for methods of the same order. It is
observed that for larger step-sizes, the Lobatto--Gauss-Legendre have
smaller error (about two orders of magnitude) than IMEX with Yoshida
timestepping.  For smaller step-sizes, there is no obvious answer and
the choice of the method will most likely depend on the application
under consideration.

\begin{figure}[t]
  \centering
  \includegraphics[width=.48\textwidth]{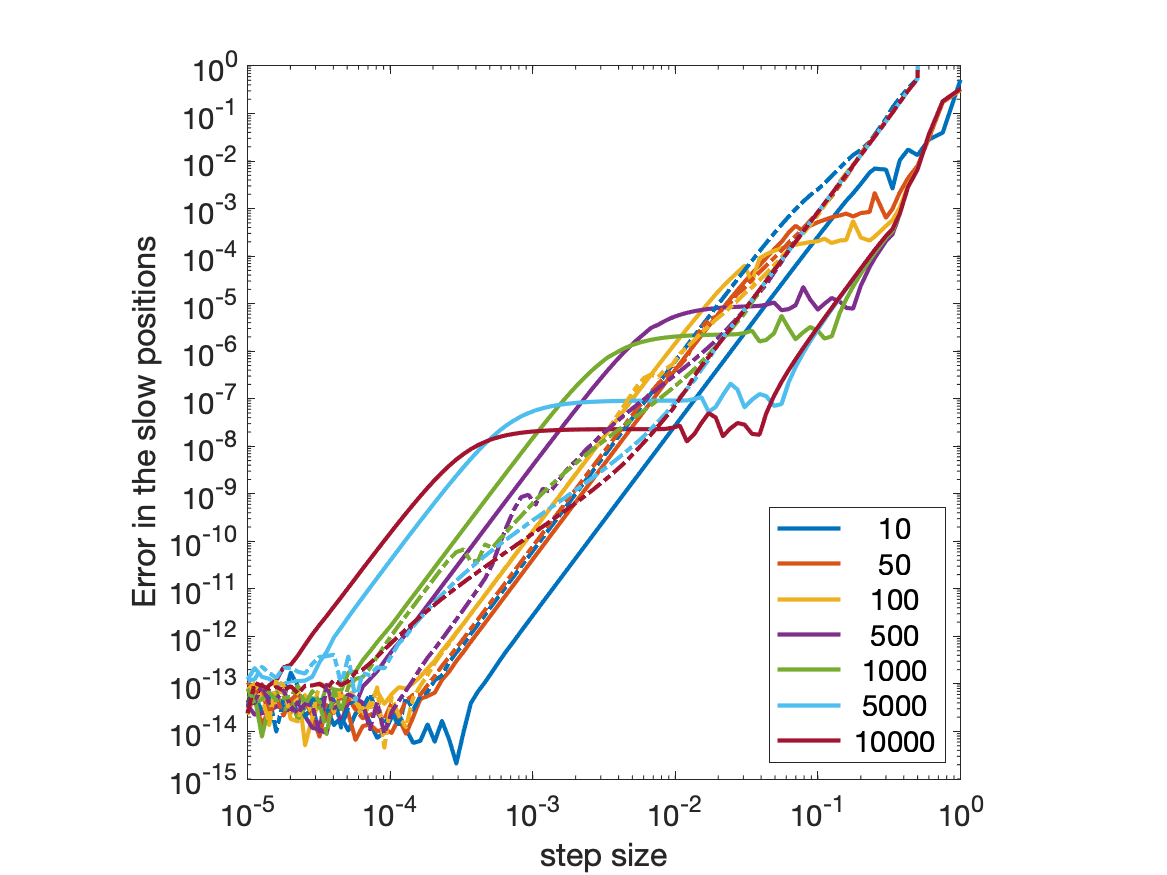}
  \includegraphics[width=.48\textwidth]{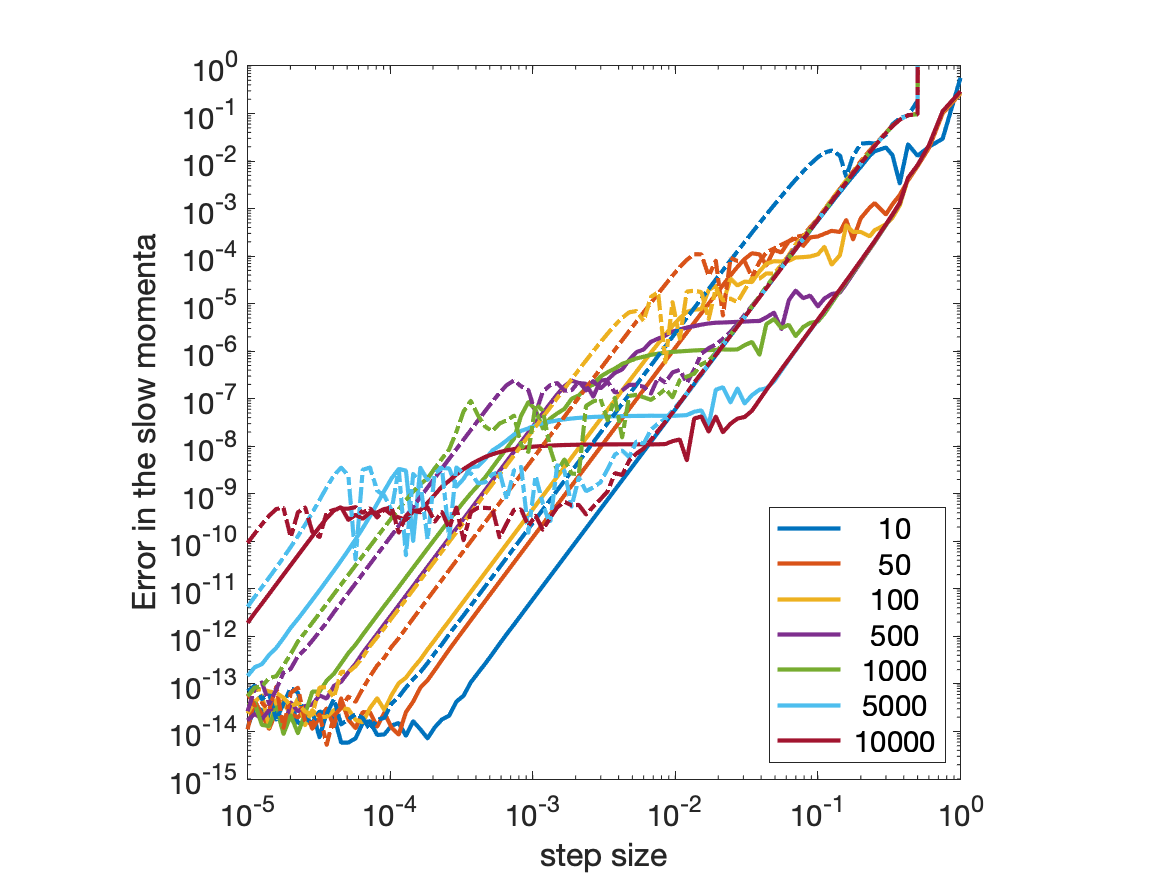}
  \caption{Comparison of the error in the slow positions (left) and
    slow momenta for the interpolation Lobatto--Gauss (solid line) and the IMEX-Yoshida
    method (dashed line) of order four.}
     \label{fig:FPUslowcomparison4}
\end{figure}

\begin{figure}[t]
  \centering
  \includegraphics[width=.48\textwidth]{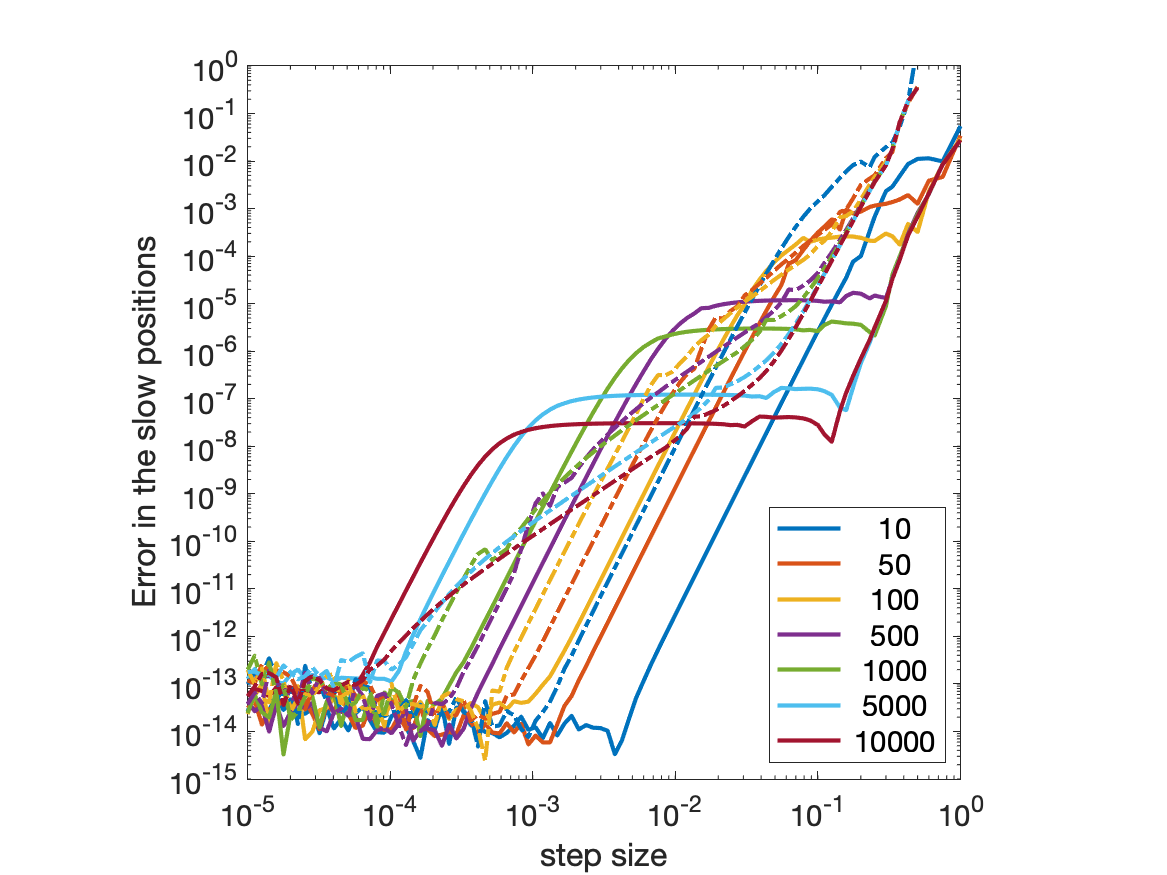}
  \includegraphics[width=.48\textwidth]{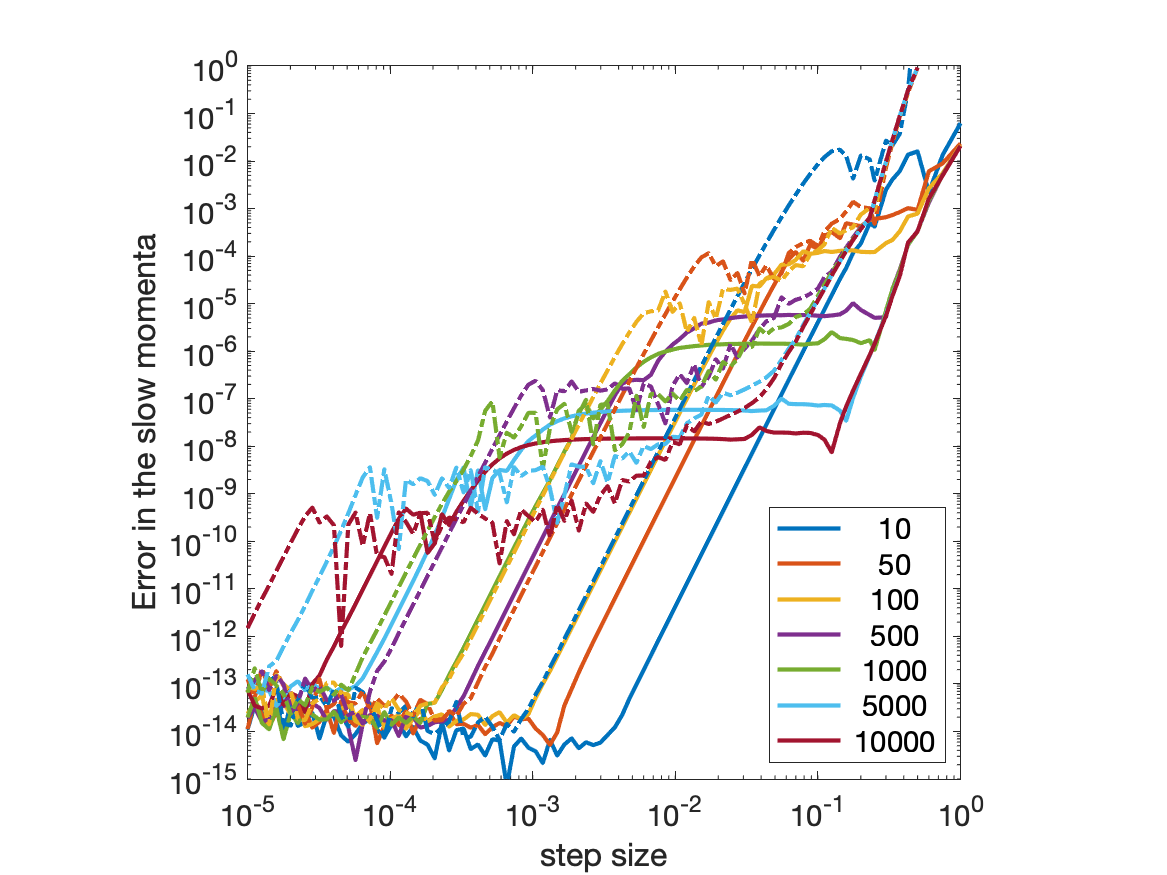}
  \caption{Comparison of the error in the slow positions (left) and
    slow momenta for the interpolation Lobatto--Gauss (solid line) and the IMEX-Yoshida
    method (dashed line) of order six.}
       \label{fig:FPUslowcomparison6}
     \end{figure}

The Lobatto--Gauss-Legendre of order 4 and 6 have
implicit stages, which require one and two functions evaluations
respectively. In our numerical experiments, we
have solved the implicit stages by fixed point iteration. The number
of function evaluations will then depend on the number of fixed point
iterations.
For small step sizes, we observed 1-2 fixed point iterations. For
larger step sizes (but still in the convergence region) we never
observed more than 10 iterations, a typical number was  $\approx$5-6. In comparison, the order 4 IMEX with the Yoshida technique would
require 3 function evaluations and 9 function evaluation for order
6. However, the Yoshida techniques have larger error in the regions of
convergence, especially in the larger step size regions. This error is
about two-three orders of magnitude larger than the Lobatto--Gauss-Legendre
methods, indicating that these can be used with a larger
step size, resulting in an overall cheaper method.

\section{Conclusions and further remarks}
\label{sec:concl-furth-remarks}
We have introduced a family of symplectic methods based on a
variational derivation. The main idea is to use different integration
quadrature formulas for different terms of the Lagrangian. The
introduction of extra internal stages is solved either by
interpolation or by collocation. In particular, we have derived a
higher order generalization of the IMEX method (using the Verlet
method and an interpolated form of the Implicit Midpoint Rule), namely
the LobattoIIIA-B--Gauss-Legendre family of arbitrary order, and
present the coefficients 
explicitly for the methods of order 4 and 6. We have proved that these
method possess the expected order   and shown that the
methods with internal stages solved by interpolation are
P-stable, making these particularly interesting in the context  of
oscillatory problem. We have also observed that these higher order 
methods might suffer from resonance and from order reduction.
The methods are thoroughly tested on the FPUT problem and their
behaviour is compared to higher order IMEX implementations using the
Yoshida time-stepping technique. 

The proposed methods might be considered as special subclass of additive
Runge--Kutta methods (ARK). The advantage of the variational derivation is
that the methods are automatically symplectic, therefore particularly
suited to geometric integration.
It will be interesting to explore further this mixed technique for
other choices of primary/secondary methods and the use other techniques,
like treating some of the terms by averaged Lagrangian methods in the spirit
of \cite{celledoni2017}. Possibly, this mixed approach might lead to
further interesting numerical method that might not be
easily discovered using the classical algebraic theory of RK and ARK
methods.

\section*{Acknowledgements}
\label{sec:acknowledgements}
The author would like to thank MSc Fredrick Pfeil for some very preliminary
results and simulations for the FPUT problem \cite{pfeil19aho}.
The final part of this work was completed at the Isaac Newton Institute for
Mathematical Sciences, which the author acknowledges for support and hospitality during the programme
\emph{Geometry, compatibility and structure preservation in
  computational differential equations} (2019), EPSRC grant number
EP/R014604/1.
This work was also partially supported by European Union Horizon 2020 research and innovation programme under the Marie Sklodowska-Curie grant agreement No. 691070, Challeges in preservation of structure (CHiPS).

\appendix

\section{The family of Lobatto IIIA-IIIB (primary) and
  Gauss-Legendre (secondary) methods }
\label{sec:family-lobatto-iiia}
\subsection{Methods based on interpolation}

\subsubsection{The IMEX}
\label{sec:imex-as-symplectic}
We consider the case the primary method for $L^1$ is the trapezoidal
rule, giving rise to the Verlet scheme, a Lobatto IIIA-IIIB pair PRK
with coefficients
$(A,b, c)$ and $(\widehat A, b,c)$
\begin{displaymath}
  \begin{array}{c|cc}
    0&0 &0 \\ 
    1 & \frac12 & \frac12\\[2pt] \hline \\[-12pt]
    & \frac12 & \frac12
  \end{array}, \qquad 
  \begin{array}{c|cc}
    0&\frac12 &0 \\
    1 & \frac12 & 0\\[2pt] \hline \\[-12pt]
    & \frac12 & \frac12
  \end{array}.
\end{displaymath}
The secondary scheme is the IMR ($s_2=1$, $ \tilde c_1 =
\frac12$, $\tilde b_1 = 1$).
One has
\begin{displaymath}
  \tilde A =
  \begin{bmatrix}
    \tilde a_{1,1} & \tilde a_{1,2} 
  \end{bmatrix}=
  \begin{bmatrix}
    \frac14 & \frac14
  \end{bmatrix}, \qquad
  \widehat {\tilde A}= 
\begin{bmatrix}
  \frac12\\[5pt] \frac12
\end{bmatrix}
\end{displaymath}

\subsubsection{Method of order four}
\label{sec:higher-order-methods}
To construct higher order methods  we look at the  Lobatto
IIIA-IIIB pair ($s=3$) and GL ($s=2$).
\begin{displaymath}
  A=\begin{bmatrix}
    0 &0& 0\\    \noalign{\medskip}
    \frac5{24} &\frac13 & -\frac1{24} \\     \noalign{\medskip}
    \frac16 & \frac23 & \frac16
  \end{bmatrix}
  \qquad
  \widehat A=\begin{bmatrix}
    \frac16 &-\frac16& 0\\     \noalign{\medskip}
    \frac16 &\frac13 & 0 \\     \noalign{\medskip}
    \frac16 & \frac56 & 0 
  \end{bmatrix}, 
\end{displaymath}
with
\begin{displaymath}
  c= \begin{bmatrix}0 &\frac12& 1\end{bmatrix}^T, \qquad
  b=\begin{bmatrix}\frac16& \frac23& \frac16\end{bmatrix}^T.
\end{displaymath}
For the Gauss-Legendre quadrature, we have
\begin{displaymath}
  \tilde c= \begin{bmatrix}\frac12-\frac{\sqrt3}6& \frac12+\frac{\sqrt3}6 \end{bmatrix}^T,
  \qquad \tilde b=\begin{bmatrix}\frac12 & \frac12\end{bmatrix}^T.
\end{displaymath}
We consider the interpolation case \R{eq:18}. The matrix $\tilde A$ and $\widehat {\tilde A}$ are 
\begin{displaymath}
  \tilde A =
  \begin{bmatrix}
    \frac16-\frac{\sqrt{3}}{36}& \frac13-\frac{\sqrt{3}}9&
    -\frac{\sqrt{3}}{36}\\     \noalign{\medskip}
    \frac16+\frac{\sqrt{3}}{36}& \frac13+\frac{\sqrt{3}}{9}&\frac{\sqrt{3}}{36}
  \end{bmatrix}, \qquad
  \widehat {\tilde A}=
   \begin{bmatrix}
    \frac{\sqrt{3}}{12}&-\frac{\sqrt{3}}{12}\\     \noalign{\medskip}
    \frac14+\frac{\sqrt{3}}{12}&\frac14-\frac{\sqrt{3}}{12}\\     \noalign{\medskip}
     \frac12+\frac{\sqrt{3}}{12} & \frac12-\frac{\sqrt{3}}{12}
   \end{bmatrix}
 \end{displaymath}
As for the primary method, we have $Q_1 = q_0$ and $Q_3= q_1$.

\subsubsection{Method of order six}
Consider the  Lobatto IIIA-IIIB pair ($s=4$) and GL ($s=3$),
\begin{displaymath}
  A =
  \begin{bmatrix}
    0& 0 & 0 & 0\\
    \frac{11+\sqrt{5}}{120} &  \frac{25-\sqrt{5}}{120} &
    \frac{25-13\sqrt{5}}{120} &  \frac{-1+\sqrt{5}}{120}\\
    \noalign{\medskip}
     \frac{11-\sqrt{5}}{120} &  \frac{25+13\sqrt{5}}{120} &
     \frac{25+\sqrt{5}}{120} &  \frac{-1-\sqrt{5}}{120}\\
         \noalign{\medskip}
     \frac1{12} & \frac5{12} & \frac5{12} & \frac1{12}
  \end{bmatrix},
  \qquad
  \widehat A =
  \begin{bmatrix}
    \frac1{12} & \frac{-1-\sqrt{5}}{24} & \frac{-1+\sqrt{5}}{24} & 0\\
        \noalign{\medskip}
    \frac1{12} & \frac{25+\sqrt{5}}{120} & \frac{25-13\sqrt{5}}{120} &
    0\\
        \noalign{\medskip}
    \frac1{12} & \frac{25+13\sqrt{5}}{120} & \frac{25-\sqrt{5}}{120} &
    0\\
        \noalign{\medskip}
    \frac1{12} & \frac{11-\sqrt{5}}{24} & \frac{11+\sqrt{5}}{24} & 0\\
  \end{bmatrix}
\end{displaymath}
with
\begin{displaymath}
  c=  \begin{bmatrix}
    0 & \frac12-\frac{\sqrt{5}}{10} & \frac12+\frac{\sqrt{5}}{10} & 1
  \end{bmatrix}^T, \qquad
  b=
  \begin{bmatrix}
    \frac1{12} & \frac5{12} & \frac5{12} & \frac1{12}
  \end{bmatrix}^T.
\end{displaymath}
For the Gauss-Legendre quadrature, we have
\begin{displaymath}
  \tilde c= \begin{bmatrix}\frac12-\frac{\sqrt{15}}{10}& \frac12 & \frac12+\frac{\sqrt{15}}{10} \end{bmatrix}^T,
  \qquad \tilde b=\begin{bmatrix}\frac5{18} & \frac49&\frac5{18} \end{bmatrix}^T.
\end{displaymath}
We consider the interpolation case \R{eq:18}. The matrix $\tilde A$ and $\widehat {\tilde A}$ are 
\begin{displaymath}
  \tilde A= \begin {bmatrix} \frac1{15}&{\frac { 25 -6
\sqrt {15}+3 \sqrt {5}}{120}}&{\frac { 25-6
\sqrt {15}-3\sqrt {5}}{120}}&{\frac{1}{60}}
\\
\noalign{\medskip}{\frac{5}{48}}&{\frac{5}{24}}+\frac{\sqrt{5}}{16}&{
  \frac{5}{24}}-\frac{\sqrt5}{16}&-\frac1{48}\\
\noalign{\medskip}
\frac1{15}&{\frac { 25+6\sqrt {15}+3\sqrt {5}}{120}}
&{\frac {25 + 6\sqrt {15}-3\sqrt {5}}{120}}&{\frac{
1}{60}}\end {bmatrix} 
\qquad
\widehat{\tilde A} =
\begin{bmatrix}
  \frac1{18}&-\frac19&\frac1{18}\\
  \noalign{\medskip}{\frac { 25+ 6\sqrt {15}-3 \sqrt {5}}{180}}&\frac29-\frac{\sqrt{5}}
{15}&{\frac { 25 -6\sqrt {15}-3\sqrt {5}}{180}}\\
\noalign{\medskip}{\frac {25+  6\sqrt {15}+3 \sqrt
    {5}}{180}}&\frac29+\frac{\sqrt5}{15}&
{\frac { 25 -6\sqrt {15}+3 \sqrt {5}}{180}}
\\ \noalign{\medskip}\frac29&\frac59&\frac29

\end{bmatrix}
\end{displaymath}

\subsection{Methods based on collocation}
The weights $b, \tilde b$ and nodes $c, \tilde c$ of the primary and
secondary method of each order, as well as the correspoding PRK for
the primary methods are the same as for interpolation.
The difference is in the coefficient matrices $\tilde A$ and $\widehat{\tilde A}$, which we report
below for convenience.
\subsubsection{Second order method}
\begin{displaymath}
  \tilde A =
  \begin{bmatrix}
    \frac38 & \frac18
  \end{bmatrix}, \qquad
  \widehat{\tilde A} =
  \begin{bmatrix}
    \frac14 \\[5pt] \frac34
  \end{bmatrix}.
\end{displaymath}

\subsubsection{Fourth order method}
\begin{displaymath}
  \tilde A =
  \begin{bmatrix}
    \frac16-{\frac {\sqrt {3}}{108}}& \frac13-{\frac {4
        \sqrt {3}}{27}}&-{\frac {\sqrt {3}}{108}}\\
    \noalign{\medskip}\frac16+{
\frac {\sqrt {3}}{108}}&\frac13+{\frac {4\sqrt {3}}{27}}&{\frac {\sqrt {
3}}{108}}
  \end{bmatrix}, \qquad
  \widehat{\tilde A} =
  \begin{bmatrix}
    \frac{\sqrt3}{36}&-\frac{\sqrt {3}}{36}
\\ \noalign{\medskip}\frac14+\frac{\sqrt {3}}9&\frac14-\frac{\sqrt {3}}9
\\ \noalign{\medskip}\frac12+\frac{\sqrt {3}}{36}&\frac12-\frac{\sqrt {3}}{36}
  \end{bmatrix}.
\end{displaymath}

\subsubsection{Six order method}
\begin{displaymath}
  \tilde A = 
  \begin{bmatrix}
    {\frac{19}{240}}&{\frac {\sqrt {5}
 \left( \sqrt {15}-5 \right) ^{2} \left( 3\,\sqrt {15}+4\,\sqrt {5}+2
\,\sqrt {3}+12 \right) }{2400}}&-{\frac {\sqrt {5} \left( \sqrt {15}-5
 \right) ^{2} \left( 3\,\sqrt {15}-2\,\sqrt {3}-4\,\sqrt {5}+12
 \right) }{2400}}&{\frac{1}{240}}\\ \noalign{\medskip}{\frac{17}{192}}
&{\frac{5}{24}}+{\frac {5\,\sqrt {5}}{64}}&{\frac{5}{24}}-{\frac {5\,
\sqrt {5}}{64}}&-{\frac{1}{192}}\\ \noalign{\medskip}{\frac{19}{240}}&
-{\frac {\sqrt {5} \left( \sqrt {15}+5 \right) ^{2} \left( 3\,\sqrt {
15}-4\,\sqrt {5}+2\,\sqrt {3}-12 \right) }{2400}}&{\frac {\sqrt {5}
 \left( \sqrt {15}+5 \right) ^{2} \left( 3\,\sqrt {15}+4\,\sqrt {5}-2
\,\sqrt {3}-12 \right) }{2400}}&{\frac{1}{240}}
\end{bmatrix},
\end{displaymath}
\begin{displaymath}
  \widehat{\tilde A} =
  \begin{bmatrix}
    {\frac{1}{72}}&-\frac1{36}&{\frac{1}{72}}
\\ \noalign{\medskip}{\frac{5}{36}}+{\frac { \left( 12\,\sqrt {3}-3
 \right) \sqrt {5}}{360}}&\frac29-\frac{\sqrt5}{12}&{\frac{5}{36}}+{\frac {
 \left( -12\,\sqrt {3}-3 \right) \sqrt {5}}{360}}\\ \noalign{\medskip}
{\frac{5}{36}}+{\frac { \left( 12\,\sqrt {3}+3 \right) \sqrt {5}}{360}
}&\frac29+\frac{\sqrt5}{12}&{\frac{5}{36}}+{\frac { \left( -12\,\sqrt {3}+3
 \right) \sqrt {5}}{360}}\\ \noalign{\medskip}{\frac{19}{72}}&{\frac{
17}{36}}&{\frac{19}{72}}

  \end{bmatrix}.
\end{displaymath}


\end{document}